\newtheorem{proposition}{Proposition}[section]
\newtheorem{lemma}[proposition]{Lemma}
\newcommand{\smallfrac}[2]{{\textstyle\frac{#1}{#2}}}
\newcommand{\qq}{\widehat{\mathbf q}}
\newcommand{\dimm}{\mathrm{dim}\,}
\newcommand{\tildeP}{\widetilde{\mathcal P}}
\newcommand{\tildePP}{\widetilde{\boldsymbol{\mathcal P}}}
\title{From Raviart-Thomas to HDG:\\ a personal voyage}
\author{Francisco-Javier Sayas\\ Department of Mathematical Sciences, University of Delaware}
\date{\today}
\begin{document}

\maketitle

This document has been motivated  by a course entitled {\em From Raviart-Thomas to HDG}, which I have prepared for {\em C\'adiz Num\'erica 2013 -- Course and Encounter on Numerical Analysis} (C\'adiz, Spain -- June 2013). It is an introduction to the techniques for local analysis of classical mixed methods for diffusion problems and how they motivate the Hybridizable Discontinuous Galerkin method. They assume knowledge of basic techniques on Finite Element Analysis, but not of Mixed Finite Element Methods. Many estimates (especially those fitting in the general category of scaling arguments) are carried out with almost excruciating detail. All final bounds are given for solutions with maximal regularity. This being an introductory text, no attempt has been made at dealing with complicated or anisotropic meshes, or at producing estimates for solutions with very low regularity. I have preferred to give most technical results in a theorem-and-proof format, but have kept a more argumentative (while fully rigorous) tone for the main estimates of the three families of methods we will be dealing with. I have also tried to give some precise references to original sources, but have not been extremely thorough in this, partially because of not being able to discern who-was-first in some particular instances.

\begin{framed}
\noindent Corrections and suggestions are welcome.
\end{framed}

\tableofcontents

\section{Getting ready}

This section gives a collection of transformation techniques, equalities, and bounds to relate quantities defined in physical variables to quantities in a reference configuration. They belong to the category of what  the FEM community calls {\em scaling arguments}. I will  not be using anything specific from FEM theory, but the arguments will be familiar to anyone aware of these techniques: proper introductions can be found in classic books as Ciarlet's \cite{Ciarlet:1978}, Brenner and Scott's \cite{BrSc:2008} or Braess's \cite{Braess:2007}.

\subsection{A personal view of Piola transforms}

\paragraph{Reference configurations.}
To fix ideas, let $\widehat K$ be the reference triangle/tetrahedron 
\[
\widehat K:=\{ \widehat{\mathbf x}\in \mathbb R^d\,:\, \widehat x_i\ge 0, \forall i,\quad \mathbf e\cdot\widehat{\mathbf x}\le 1  \}, \qquad \mbox{where $\mathbf e=(1,\ldots,1)^\top$}.
\]
Given a general triangle/tetrahedron, we consider fixed affine invertible maps
\[
\mathrm F:\widehat K \to K, \qquad \mathrm G:=\mathrm F^{-1}:K \to \widehat K
\]
and will denote
\[
\mathrm B:=\mathrm D\mathrm F, \qquad \mathrm B^{-1}=\mathrm D \mathrm G, \qquad |J|:=|\det\mathrm B|.
\]
At the present moment it is not necessary to show dependence on $K$ of all of these quantities. We will also consider a piecewise constant function $|a|:\partial \widehat K\to \mathbb R$ containing the absolute value of the determinant of the tangential derivative matrix of $\mathrm F|_{\partial \widehat K}$. In particular
\[
\int_K f=\int_{\widehat K} f\circ \mathrm F\, |J|, \qquad \int_{\partial K} f =\int_{\partial\widehat K} f\circ \mathrm F|_{\partial\widehat K}\, |a|.
\]
Note that the latter equality is not a typical application of the change of variable formula (that applies on volumes) but the possibility of parametrizing both surface integrals from the same sets of coordinates. Outward pointing unit normal fields will be denoted $\mathbf n:\partial K \to\mathbb R^d$ and $\widehat{\mathbf n}:\partial\widehat K\to\mathbb R^d$.

\paragraph{Changes of variables.} We will deal with three types of fields: scalar fields defined on the volume $u$, vector fields defined on the volume $\mathbf q$, and scalar fields defined on the boundary $\mu$. For reasons we will repeatedly see, inner products are often better understood as duality products. We will then write
\[
(u,u^*)_K:=\int_K\, u\, u^*, \qquad (\mathbf q,\mathbf q^*)_K:=\int_K \mathbf q\cdot\mathbf q^*, \qquad \langle \mu,\mu^*\rangle_{\partial K}:=\int_{\partial K} \mu\, \mu^*
\]
thinking momentarily that starred quantities are dual variables and unstarred quantities are primal. The rules for changes of variables are given as follows
\begin{framed}
\[
\begin{array}{lll}
\mbox{Primal} \hspace{2cm} & u:K\to \mathbb R\hspace{2cm} & \widehat u:=u\circ \mathrm F,\\[1.5ex]
& \mathbf q:K \to \mathbb R^d & \widehat{\mathbf q}:=|J|\mathrm B^{-1} \mathbf q\circ\mathrm F\\[1.5ex]
& \mu:\partial K \to \mathbb R & \widehat\mu:=\mu\circ \mathrm F|_{\partial\widehat K}\\[1.5ex]
\mbox{Dual} & u^*:K\to \mathbb R & \widecheck u:=|J|\, u^*\circ \mathrm F,\\[1.5ex]
& \mathbf q^*:K \to \mathbb R^d & \widecheck{\mathbf q}:=\mathrm B^\top \mathbf q^*\circ\mathrm F\\[1.5ex]
& \mu^*:\partial K \to \mathbb R & \widecheck\mu^*:=|a|\,\mu^*\circ \mathrm F|_{\partial\widehat K}
\end{array}
\]
\end{framed}
\noindent
so that we can change variables in inner/duality products
\begin{subequations}\label{eq:0}
\begin{alignat}{4}
(u,u^*)_K &=(\widehat u,\widecheck u^*)_{\widehat K},\\
(\mathbf q,\mathbf q^*)_K &= (\widehat{\mathbf q},\widecheck{\mathbf q}^*)_{\widehat K},\\
\label{eq:0c}
\langle \mu,\mu^*\rangle_{\partial K} &=\langle \widehat\mu,\widecheck\mu^*\rangle_{\partial\widehat K}.
\end{alignat}
\end{subequations}
These set of rules might be somewhat whimsical, but there are many reasons for them. The best (and deepest) explanations go through $p-$forms, a context where this and much more makes complete sense. The interest reader might want to have a look at the massive work or Douglas Arnold, Richard Falk, and Ragnar Winther \cite{ArFaWi:2006} and \cite{ArFaWi:2010}, where all of this (and considerably more) is given a very general treatment.

\paragraph{A remark on restrictions.} The restriction to $\partial K$ of a function $u:K\to \mathbb R$ (be it a trace restriction or a more classical one) will be denoted either $u|_{\partial K}$ or just $u$. Normal restrictions of vector fields $\mathbf q:K \to \mathbb R^d$, will just be denoted $\mathbf q\cdot\mathbf n$. Note that
\begin{equation}\label{eq:E1}
\widehat{u|_{\partial K}}=\widehat u|_{\partial\widehat K},
\end{equation}
a property that is not satisfied by the check transformations.

\paragraph{Changes of variables and operators.} The following result shows how the gradient and divergence operators, and the normal trace to the boundary transform primal quantities to dual quantities.

\begin{proposition}[Changes of variables]
For smooth enough fields,
\begin{subequations}
\begin{alignat}{4}
\label{eq:1a}
\widehat{\mathrm{div}}\,\widehat{\mathbf q} &= \widecheck{\mathrm{div}\,\mathbf q},\\
\label{eq:1b}
\widehat\nabla \,\widehat u &=\widecheck{\nabla u},\\
\label{eq:1c}
\widehat{\mathbf q}\cdot\widehat{\mathbf n} &= \widecheck{\mathbf q\cdot\mathbf n},
\end{alignat}
\end{subequations}
and therefore
\begin{subequations}
\begin{alignat}{4}
\label{eq:2a}
(\mathrm{div}\,\mathbf q,u)_K &= (\widehat{\mathrm{div}}\,\widehat{\mathbf q},\widehat u)_{\widehat K},\\
\label{eq:2b}
(\mathbf q,\nabla u)_K &= (\widehat{\mathbf q},\widehat\nabla \widehat u)_{\widehat K},\\
\label{eq:2c}
\langle \mathbf q\cdot\mathbf n,\mu\rangle_{\partial K} &= \langle\widehat{\mathbf q}\cdot\widehat{\mathbf n},\widehat\mu\rangle_{\partial\widehat K}.
\end{alignat}
\end{subequations}
\end{proposition}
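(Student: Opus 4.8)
The plan is to prove the three pointwise operator identities \eqref{eq:1a}--\eqref{eq:1c} first, and then to read off the integral identities \eqref{eq:2a}--\eqref{eq:2c} by feeding them into the change-of-variable rules \eqref{eq:0}. Throughout, the decisive structural fact is that $\mathrm F$ is affine, so that $\mathrm B$ and $|J|$ are constant and may be pulled out of every reference derivative.

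I would begin with \eqref{eq:1b}, which is nothing more than the chain rule: since $\widehat u=u\circ\mathrm F$, componentwise differentiation gives $\partial\widehat u/\partial\widehat x_i=\sum_k(\partial u/\partial x_k)(\mathrm F)\,\mathrm B_{ki}$, i.e. $\widehat\nabla\widehat u=\mathrm B^\top(\nabla u)\circ\mathrm F$, and the right-hand side is exactly the check transform of the dual vector field $\nabla u$. For \eqref{eq:1a} I would write $\widehat q_i=|J|\sum_j(\mathrm B^{-1})_{ij}\,(q_j\circ\mathrm F)$, differentiate (the chain rule introduces the factor $\mathrm B_{ki}=\partial F_k/\partial\widehat x_i$), and then collapse the resulting double sum using $\sum_i(\mathrm B^{-1})_{ij}\mathrm B_{ki}=\delta_{kj}$. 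What survives is $|J|\,(\mathrm{div}\,\mathbf q)\circ\mathrm F=\widecheck{\mathrm{div}\,\mathbf q}$, the classical Piola identity; affineness of $\mathrm F$ is what reduces this to a one-line cancellation rather than forcing the cofactor form of the Piola identity.

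The main obstacle is \eqref{eq:1c}, the normal-trace identity, since it encodes how $\mathbf n$ transforms and is a statement about surface geometry rather than a bare chain-rule manipulation. I would establish it through Nanson's relation $|a|\,(\mathbf n\circ\mathrm F|_{\partial\widehat K})=|J|\,\mathrm B^{-\top}\widehat{\mathbf n}$, valid facewise with $\mathbf n,\widehat{\mathbf n}$ constant on each face. Granting this, I compute $\widehat{\mathbf q}\cdot\widehat{\mathbf n}=|J|(\mathrm B^{-1}\mathbf q\circ\mathrm F)\cdot\widehat{\mathbf n}=(\mathbf q\circ\mathrm F)\cdot(|J|\mathrm B^{-\top}\widehat{\mathbf n})=|a|\,(\mathbf q\cdot\mathbf n)\circ\mathrm F|_{\partial\widehat K}$, which is precisely $\widecheck{\mathbf q\cdot\mathbf n}$. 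As a consistency check one can recover \eqref{eq:1c} a posteriori by subtracting the reference Green's identity from the physical one once \eqref{eq:2a}--\eqref{eq:2b} are known, but the Nanson route keeps the pointwise statement self-contained.

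Finally, with the operator identities in hand, the integral identities are immediate. Treating $\mathrm{div}\,\mathbf q$ as a dual scalar and $u$ as primal, the first rule of \eqref{eq:0} together with \eqref{eq:1a} gives $(\mathrm{div}\,\mathbf q,u)_K=(\widehat u,\widecheck{\mathrm{div}\,\mathbf q})_{\widehat K}=(\widehat{\mathrm{div}}\,\widehat{\mathbf q},\widehat u)_{\widehat K}$, which is \eqref{eq:2a}; treating $\nabla u$ as a dual vector and $\mathbf q$ as primal, the second rule of \eqref{eq:0} with \eqref{eq:1b} yields \eqref{eq:2b}; and treating $\mathbf q\cdot\mathbf n$ as a dual boundary scalar and $\mu$ as primal, \eqref{eq:0c} with \eqref{eq:1c} yields \eqref{eq:2c}. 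Each of the three is just the recipe \emph{apply the matching rule from \eqref{eq:0}, then substitute the corresponding operator identity}.
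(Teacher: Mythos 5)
Your proposal is correct, and it handles \eqref{eq:1b}, \eqref{eq:1a}, \eqref{eq:2a}, \eqref{eq:2b} essentially as the paper does (chain rule, index computation with $\mathrm B\mathrm B^{-1}=\mathrm I$, then the duality rules \eqref{eq:0}). Where you genuinely diverge is in the normal-trace pair \eqref{eq:1c}/\eqref{eq:2c}, and the difference is one of logical order and of inputs. You prove the pointwise identity \eqref{eq:1c} \emph{first}, by invoking Nanson's relation $|a|\,(\mathbf n\circ\mathrm F|_{\partial\widehat K})=|J|\,\mathrm B^{-\top}\widehat{\mathbf n}$, and then all three integral identities drop out of \eqref{eq:0} uniformly. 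The paper goes the other way around: it never touches surface geometry at all, but instead derives \eqref{eq:2c} from \eqref{eq:2a}--\eqref{eq:2b} by writing the divergence theorem on $K$ and on $\widehat K$ and matching the volume terms, and only \emph{then} recovers the pointwise statement \eqref{eq:1c} by a testing argument: \eqref{eq:0c} and \eqref{eq:2c} give $\langle \widecheck{\mathbf q\cdot\mathbf n}-\widehat{\mathbf q}\cdot\widehat{\mathbf n},\widehat\mu\rangle_{\partial\widehat K}=0$ for all $\widehat\mu$, and choosing $\widehat\mu$ equal to the difference forces it to vanish. (Your closing remark about recovering \eqref{eq:1c} ``a posteriori'' from the two Green's identities is exactly the paper's route.) What each approach buys: yours keeps all pointwise transformation laws self-contained and proved before any integration, and it makes explicit \emph{how} the normal vector transforms, which is illuminating; the cost is that Nanson's relation is itself a nontrivial geometric fact that you assert rather than prove (for the record, it follows facewise from $(\mathrm B^{-\top}\widehat{\mathbf n})\cdot\mathrm B\widehat{\mathbf t}=\widehat{\mathbf n}\cdot\widehat{\mathbf t}=0$ for tangent vectors $\widehat{\mathbf t}$, outwardness being preserved, plus the area-scaling identity $|a|=|J|\,|\mathrm B^{-\top}\widehat{\mathbf n}|$, so the gap is fillable but not free). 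The paper's route needs nothing beyond the chain rule, the duality bookkeeping of \eqref{eq:0}, and the divergence theorem, with the normal transformation law emerging as a byproduct rather than entering as an input; this is precisely the kind of argument that generalizes cleanly to the $p$-forms setting the paper alludes to.
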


\begin{proof}
This moment is as good as any other to learn how to use fast index notation: repeated subindices will be automatically added, and a comma followed by one or more indices denotes differentiation w.r.t. the corresponding variable. Note that $(\mathrm D\mathrm F)_{ij}=\mathrm B_{ij}=\mathrm F_{i,j}$. Differentiating in $\widehat u=u\circ \mathrm F$, we have
\[
\widehat u_{,i}=(u_{,j}\circ\mathrm F)\,\mathrm F_{j,i}=\mathrm B_{ji} u_{,j}\circ\mathrm F,
\]
which is \eqref{eq:1b}. All other formulas can be proved using duality arguments. We will do some of them by hand. For instance, the formula
\[
\mathbf q=|J|^{-1} \mathrm B\widehat{\mathbf q}\circ\mathrm G \qquad (\mbox{recall that $\mathrm G=\mathrm F^{-1}$}),
\]
is written componentwise as
\[
q_i=|J|^{-1} \mathrm B_{ij}\widehat q_j\circ\mathrm G
\]
and leads to
\begin{alignat*}{4}
q_{i,i}&=|J|^{-1} \mathrm B_{ij} (\widehat q_{j,k}\circ\mathrm G)\mathrm G_{k,i} & \qquad &\mbox{(chain rule)}\\
&=|J|^{-1} \mathrm B_{ij}\mathrm B_{ki}^{-1} \widehat q_{j,k}\circ\mathrm G & & (\mathrm D\mathrm G=\mathrm B^{-1})\\
&=|J|^{-1} \delta_{kj} \widehat q_{j,k}\circ\mathrm G & &(\mathrm B^{-1}\mathrm B=\mathrm I)\\
&=|J|^{-1} \widehat q_{j,j}\circ\mathrm G,
\end{alignat*}
that is, $|J|q_{i,i}\circ\mathrm F=\widehat q_{i,i}$, which proves \eqref{eq:1a}. The changes of variables \eqref{eq:1a} and \eqref{eq:1b} and the integral rules \eqref{eq:0} --which motivated our notation--, imply \eqref{eq:2a} and \eqref{eq:2b}. The divergence theorem proves then \eqref{eq:2c}. It then follows from \eqref{eq:0} that
\[
\langle \widecheck{\mathbf q\cdot\mathbf n}-\widehat{\mathbf q}\cdot\widehat{\mathbf n},\widehat \mu\rangle_{\partial\widehat K}=0.
\] 
Taking $\widehat\mu=\widecheck{\mathbf q\cdot\mathbf n}-\widehat{\mathbf q}\cdot\widehat{\mathbf n}$, \eqref{eq:1c} follows.
\end{proof}

\subsection{Scaling inequalities}

\paragraph{Some no-brainers.}
We start with quite obvious changes of variables for integrals
\begin{subequations}\label{eq:4}
\begin{alignat}{6}
\label{eq:4a}
\|u\|_K &\le  |J|^{-1/2}\,\|\widehat u\|_{\widehat K},  &  \|\widehat u\|_{\widehat K} &\le  |J|^{1/2} \| u\|_K &\qquad &\mbox{(obviously equal)}\\
\label{eq:4b}
\|\mathbf q\|_K &\le  |J|^{-1/2} \| \mathrm B\|\,\|\widehat{\mathbf q}\|_{\widehat K}, \qquad &  \|\widehat{\mathbf q}\|_K &\le  |J|^{1/2} \| \mathrm B^{-1}\|\,\| \mathbf q\|_{K},\\
\label{eq:4c}
\|\mu\|_{\partial K} &\le \| a\|_{L^\infty}^{1/2} \| \widehat\mu\|_{\widehat K} & \|\widehat\mu\|_{\partial\widehat K} & \le \| a^{-1}\|_{L^\infty}^{1/2} \|\mu\|_{\partial K}.
\end{alignat}
\end{subequations}
At this precise point, we start assuming that there is a collection of triangles/tetrahedra $\mathcal T_h$. The diameter of $K$ is denoted $h_K$. We typically write $h:=\max_{K\in \mathcal T_h} h_K$. The collection $\mathcal T_h$ is called shape-regular when $h_K\le C \rho_K$, where $\rho_K$ is the diameter of the largest ball that we can insert in $K$. This definition includes a constant $C=C(\mathcal T_h)$ that always exists. For it to make sense {\em with $C$ independent of $h$}, we have to assume that there is actually a collection of triangulations $\mathcal T_h$, that are just tagged with this general parameter $h$. Readers are supposed to be in the know of this FEM abuse of notation, and we will not insist on this any longer. Wiggled inequalities will be extremely useful to avoid the introduction of constants that are independent of $h$, possibly different in each occurence:
\[
a_h \lesssim b_h \qquad \mbox{ means }\qquad a_h \le C \, b_h \mbox{ with $C>0$ independent of $h$},
\]
and
\[
a_h\approx b_h\qquad \mbox{means}\qquad a_h\lesssim b_h\lesssim a_h.
\]
Shape-regularity implies
\begin{subequations}\label{eq:5}
\begin{alignat}{6}
& \|\mathrm B_K\|\lesssim h_K, \qquad& & \|\mathrm B_K^{-1}\|\lesssim h_K^{-1}\\
& |J_K|\lesssim h^d_K, & &|J_K|^{-1}\lesssim h^{-d}_K &\qquad  & (|J_K|\approx h^d_K)\\
& \| a_K\|_{L^\infty} \lesssim h^{d-1}_K, & & \| a^{-1}_K\|_{L^\infty} \lesssim h^{1-d}_K,
\end{alignat}
\end{subequations}
and then \eqref{eq:4} ends up being
\begin{equation}\label{eq:6}
\| u\|_K\approx h^{\frac{d}2}_K \|\widehat u\|_{\widehat K}, \qquad 
\|\mathbf q\|_K \approx h^{1-\frac{d}2}_K \| \widehat{\mathbf q}\|_{\widehat K},
 \qquad \|\mu\|_{\partial K}\approx h^{\frac{d-1}2} \|\widehat\mu\|_{\partial\widehat K}.
\end{equation}
\paragraph*{Sobolev seminorms.} When derivatives are introduced (through Sobolev seminorms), the scaling properties for scalar volume fields are well known
\begin{subequations}\label{eq:7}
\begin{alignat}{6}
|u|_{m,K} &\lesssim |J|^{1/2} \|\mathrm B^{-1}\|^m |\widehat u|_{m,\widehat K},\\
|\widehat u|_{m,\widehat K} & \lesssim |J|^{-1/2} \|\mathrm B\|^m |u|_{m,K}.
\end{alignat}
Applying these inequalities to the components of $\widehat{\mathbf q}\circ \mathrm G$, we can prove
\begin{alignat}{6}
|\mathbf q|_{m,K} & \lesssim |J|^{-1/2} \|\mathrm B\|\,\|\mathrm B^{-1}\|^m |\widehat{\mathbf q}|_{m,\widehat K},\\
|\widehat{\mathbf q}|_{m,\widehat K} &\lesssim |J|^{1/2} \|\mathrm B^{-1}\| \,\|\mathrm B\|^m |\mathbf q|_{m,K}.
\end{alignat}
\end{subequations}
This and shape-regularity \eqref{eq:5} yield 
\begin{equation}\label{eq:8}
| u|_{m,K} \approx h^{\frac{d}2-m} |\widehat u|_{m,\widehat K},\qquad
|\mathbf q|_{m,K} \approx h^{1-\frac{d}2-m} |\widehat{\mathbf q}|_{m,\widehat K}.
\end{equation}

\section{The Raviart-Thomas projection}

In this section, we will review some well-known (and some not so well-known) facts about the natural interpolation operator associated to the Raviart-Thomas space. The RT space is named after Pierre-Arnaud Raviart and Jean-Marie Thomas. Their original and very often quoted paper \cite{RaTh:1977} contains a two-dimensional finite element for the $\mathbf H(\mathrm{div},\Omega)$ space, which is slightly different from the one that is now known as the RT space. The three dimensional space is one of the many elements that appears in the first of the two big finite element papers by Jean-Claude N\'ed\'elec \cite{Nedelec:1980}. 

\subsection{Facts you might (not) know about polynomials}

\paragraph{Polynomials.} Polynomials in $d$ variables with (total) degree at most $k$ will be denoted $\mathcal P_k$. It is often convenient to recall the dimension by reminding the reader where the polynomials are defined. To avoid being too wordy, here's some fast notation:
\begin{itemize}
\item $\mathcal P_k(K)$, where $K\in \mathcal T_h$, is the space of polynomials of degree at most $k$ defined on the element $K$.
\item Whenever needed, we will just write $\mathcal P_{-1}(K)=0$, to avoid singling out some particular cases.
\item $\boldsymbol{\mathcal P}_k(K):=\mathcal P_k(K)^d$.
\item $\widetilde{\mathcal P}_k(K)$ are homogeneous polynomials of degree $k$.
\item $\mathbf m\in \mathcal P_1(K)^d$ is the function $\mathbf m(\mathbf x):= \mathbf x$. There is a tradition to call this function just $\mathbf x$, but then $\widehat{\mathbf x}$ has two possible meanings, one as the variable in the reference element, and the other one as the function
\begin{equation}\label{eq:9}
\widehat{\mathbf m}(\widehat{\mathbf x})=|J| (\widehat{\mathbf x}+\mathrm B^{-1}\mathbf b), \qquad \mbox{where } \mathbf b=\mathrm F(\mathbf 0).
\end{equation}
\item $\mathcal E(K)$ is the set of edges of the triangle $K$ or faces of the tetrahedron $K$ (so that $\cup_{e\in \mathcal E(K)} \overline e=\partial K$. I'll be lazy enough to call everything a {\em face}, while using the letter $e$ (as in edge) to refer to these edges/faces.
\item $\mathcal P_k(e)$ with $e\in \mathcal E(K)$  is the space of ($d-1$)-variate polynomials on tangential coordinates.
\item $\mathcal R_k(\partial K)=\prod_{e\in \mathcal E(K)} \mathcal P_k(e)$ are piecewise polynomial functions on $\partial K$.
\end{itemize}
Easy facts about dimensions:
\[
\mathrm{dim}\,\mathcal P_k(K)={k+d\choose d}, \qquad \mathrm{dim}\,\widetilde{\mathcal P}_k(K)=\mathrm{dim}\,\mathcal P_k(e)={k+d-1\choose d-1},
\]
\[ 
\mathrm{dim}\, \mathcal R_k(\partial K)= (d+1)  {k+d-1\choose d-1}.
\]
Two more spaces we will use are
\begin{alignat*}{4}
\mathcal P_k^\bot(K)&:=\{ u\in \mathcal P_k(K)\,:\, (u,v)_K=0 \quad\forall v\in \mathcal P_{k-1}(K)\},\\
\boldsymbol{\mathcal P}_k^\bot(K)&:=\mathcal P_k^\bot(K)^d=\{ \mathbf q\in \boldsymbol{\mathcal P}_k(K)\,:\, (\mathbf q,\mathbf r)_K=0\quad\forall \mathbf r\in \boldsymbol{\mathcal P}_{k-1}(K)\}. 
\end{alignat*}
The following decompositions are direct orthogonal sums
\[
\mathcal P_k(K)=\mathcal P_{k-1}(K)\oplus \mathcal P_k^\bot(K), \qquad \boldsymbol{\mathcal P}_k(K) =\boldsymbol{\mathcal P}_{k-1}(K)\oplus \boldsymbol{\mathcal P}_k^\bot(K).
\]
It is also clear that
\begin{equation}\label{eq:10}
\mathrm{dim}\,\mathcal P_k^\bot(K)=\mathrm{dim}\,\widetilde{\mathcal P}_k(K)=\mathrm{dim}\,\mathcal P_k(e), \qquad e\in \mathcal E(K).
\end{equation}

\begin{lemma}\label{lemma:2.1}\
\begin{itemize}
\item[{\rm (a)}] If $u\in \mathcal P_k^\bot(K)$ satisfies $u|_e=0$ on some $e\in \mathcal E(K)$, then $u=0$.
\item[{\rm (b)}] If $\mathbf q\in \boldsymbol{\mathcal P}_k^\bot(K)$ satisfies $\mathbf q\cdot\mathbf n=0$ on $\partial K$, then $\mathbf q=\mathbf 0$. 
\end{itemize}
\end{lemma}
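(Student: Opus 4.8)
The plan is to prove \rm(a) first and then bootstrap \rm(b) from it. For \rm(a), I would exploit the fact that the face $e$ lies in a hyperplane, so there is an affine function $\ell\in\mathcal P_1(K)$ whose zero set is that hyperplane and which is, crucially, of constant sign on $K$: since a simplex lies entirely on one side of the plane spanned by any of its faces, I may normalize $\ell$ so that $\ell\ge 0$ on $K$, with $\ell>0$ in the interior and $\{\ell=0\}\cap\overline K=\overline e$. The goal is then to show that $u$ is divisible by $\ell$.

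The divisibility runs as follows. The restriction of $u$ to the hyperplane $\{\ell=0\}$ is a $(d-1)$-variate polynomial that vanishes on $e$; since $e$ is full-dimensional inside that hyperplane (it has nonempty relative interior), this restriction vanishes identically on the whole hyperplane. After an affine change of variables turning $\ell$ into a single coordinate, the factor theorem gives $u=\ell\,w$ with $w\in\mathcal P_{k-1}(K)$ (the degree drops by exactly one). Now the orthogonality built into the definition of $\mathcal P_k^\bot(K)$ enters: testing $u$ against $v=w\in\mathcal P_{k-1}(K)$ yields $0=(u,w)_K=\int_K\ell\,w^2$. Because $\ell$ is nonnegative and strictly positive in the interior, the integrand is nonnegative and can integrate to zero only if $w=0$, whence $u=\ell\,w=0$.

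For \rm(b) I would reduce each normal identity to part \rm(a). Fix a face $e$ with its (constant) outward normal $\mathbf n_e$. Since $\mathcal P_k^\bot(K)$ is a linear space and every component of $\mathbf q$ belongs to it, the scalar field $\mathbf q\cdot\mathbf n_e$ again lies in $\mathcal P_k^\bot(K)$, and it vanishes on $e$ by hypothesis. Part \rm(a) then forces $\mathbf q\cdot\mathbf n_e\equiv 0$ on all of $K$, and this holds for every face $e\in\mathcal E(K)$. The last step is purely geometric: among the $d+1$ faces of a nondegenerate simplex, the $d$ normals of the faces meeting at any single vertex are linearly independent, since they form (up to scaling) the dual basis to the $d$ edge vectors emanating from that vertex. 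Choosing $d$ such faces $e_1,\ldots,e_d$, we get $\mathbf q(\mathbf x)\cdot\mathbf n_{e_j}=0$ for $j=1,\ldots,d$ at every point $\mathbf x\in K$, with $\{\mathbf n_{e_j}\}$ a basis of $\mathbb R^d$; hence $\mathbf q\equiv\mathbf 0$.

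The step I expect to require the most care is the divisibility claim in \rm(a): one must argue that vanishing on the face $e$ alone — not on the whole hyperplane — already yields $\ell\mid u$. This hinges on $e$ being full-dimensional within its supporting hyperplane, so that a polynomial vanishing there must vanish on the entire hyperplane; the factor theorem is then applied to the hyperplane, not merely to $e$. Everything else — the sign-definiteness of $\ell$ on the simplex, the bookkeeping $\deg w\le k-1$ that makes $w$ an admissible test function, and the independence of the vertex normals — is routine once this point is secured.
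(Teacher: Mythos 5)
Your proof is correct and follows essentially the same route as the paper's: factor $u=\ell\,w$ using the supporting hyperplane of $e$, test against $w$ and use the fixed sign of $\ell$ on $K$ to force $w=0$; then deduce (b) by applying (a) to $\mathbf q\cdot\mathbf n_e$ for each face and invoking the linear independence of $d$ of the face normals. The only difference is that you spell out two details the paper leaves implicit — the justification that vanishing on $e$ alone yields divisibility by $\ell$, and the vertex-based argument for the independence of the normals — which is a welcome elaboration, not a deviation.
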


\begin{proof}
Part (a) is quite simple. The face $e$ is contained in the hyperplane $p(\mathbf x)=\mathbf x\cdot\mathbf n_e-c=0$, and then $u=p\, v$, where $v\in \mathcal P_{k-1}(K)$. But then,
\[
0=(u,v)_K=(p\, v,v)_K=(p, v^2)_K\qquad \mbox{while $p<0$ in $K$},
\]
so $v=0$. To prove part (b), we use (a) applied to the polynomial $\mathbf q\cdot\mathbf n_e$ for each $e\in \mathcal E(K)$. Then $\mathbf q\cdot\mathbf n_e=0$ in $K$ for every $e\in \mathcal E(K)$. This shows that $\mathbf q=\mathbf 0$. Note that the result also holds if $\mathbf q\cdot\mathbf n=0$ on $\partial K\setminus e$, for any $e\in \mathcal E(K)$, since only $d$ normal vectors are needed to have a basis of $\mathbb  R^d$.
\end{proof}

\begin{lemma}\label{lemma:2.2}
The following decomposition is a direct orthogonal sum:
\begin{equation}\label{eq:11}
\mathcal R_k(\partial K)=\{ u|_{\partial K}\,:\, u\in \mathcal P_k^\bot(K)\}\oplus\{\mathbf q\cdot\mathbf n\,:\, \mathbf q\in \boldsymbol{\mathcal P}_k^\bot(K)\}.
\end{equation}
\end{lemma}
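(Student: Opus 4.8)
The plan is to verify three things: that the two subspaces on the right-hand side are orthogonal in the $L^2(\partial K)$ inner product $\langle\cdot,\cdot\rangle_{\partial K}$, that each of the two defining maps (boundary restriction and normal trace) is injective, and that the resulting dimensions add up exactly to $\dimm\mathcal R_k(\partial K)$. Orthogonality will immediately force the sum to be direct, and the dimension count will force it to exhaust $\mathcal R_k(\partial K)$, so these three ingredients suffice.

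First I would establish orthogonality, which is the heart of the argument. Given $u\in\mathcal P_k^\bot(K)$ and $\mathbf q\in\boldsymbol{\mathcal P}_k^\bot(K)$, an integration by parts (the divergence theorem applied to the field $u\,\mathbf q$) gives
\[
\langle u,\mathbf q\cdot\mathbf n\rangle_{\partial K}=(\nabla u,\mathbf q)_K+(u,\mathrm{div}\,\mathbf q)_K.
\]
Now $\nabla u\in\boldsymbol{\mathcal P}_{k-1}(K)$, so it is annihilated by $\mathbf q\in\boldsymbol{\mathcal P}_k^\bot(K)$ by the very definition of $\boldsymbol{\mathcal P}_k^\bot(K)$; symmetrically $\mathrm{div}\,\mathbf q\in\mathcal P_{k-1}(K)$ is annihilated by $u\in\mathcal P_k^\bot(K)$. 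Hence both terms vanish and $\langle u|_{\partial K},\mathbf q\cdot\mathbf n\rangle_{\partial K}=0$. This proves that the two subspaces are orthogonal, and in particular their intersection is trivial (an element of the intersection would be orthogonal to itself), so the sum is direct.

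Next I would pin down the dimensions. Lemma \ref{lemma:2.1}(a) says the restriction map $u\mapsto u|_{\partial K}$ has trivial kernel on $\mathcal P_k^\bot(K)$, and Lemma \ref{lemma:2.1}(b) says the normal-trace map $\mathbf q\mapsto\mathbf q\cdot\mathbf n$ has trivial kernel on $\boldsymbol{\mathcal P}_k^\bot(K)$; both maps are therefore injective onto their images. Using \eqref{eq:10}, this gives
\[
\dimm\{u|_{\partial K}:u\in\mathcal P_k^\bot(K)\}=\dimm\mathcal P_k^\bot(K)={k+d-1\choose d-1}
\]
and
\[
\dimm\{\mathbf q\cdot\mathbf n:\mathbf q\in\boldsymbol{\mathcal P}_k^\bot(K)\}=\dimm\boldsymbol{\mathcal P}_k^\bot(K)=d{k+d-1\choose d-1}.
\]

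Finally I would close by comparison of dimensions. The orthogonal sum of the two images is a subspace of $\mathcal R_k(\partial K)$ of dimension $(d+1){k+d-1\choose d-1}$, which is exactly $\dimm\mathcal R_k(\partial K)$; a subspace whose dimension equals that of the whole space must coincide with it. The only mild obstacle is making sure the integration by parts is applied cleanly, but everything here is polynomial on a fixed simplex, so there are no regularity issues. The genuine point is the degree bookkeeping: $\nabla u$ and $\mathrm{div}\,\mathbf q$ drop exactly one degree, which is precisely what triggers the orthogonality conditions defining $\mathcal P_k^\bot(K)$ and $\boldsymbol{\mathcal P}_k^\bot(K)$.
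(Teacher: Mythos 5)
Your proof is correct and follows essentially the same route as the paper's: orthogonality of the two images via integration by parts (using that $\nabla u$ and $\mathrm{div}\,\mathbf q$ land in degree $k-1$), injectivity of the restriction and normal-trace maps from Lemma \ref{lemma:2.1}, and the dimension count via \eqref{eq:10} to conclude the orthogonal direct sum fills all of $\mathcal R_k(\partial K)$. No gaps; the only difference is that you make the binomial coefficients explicit where the paper keeps the count symbolic.
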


\begin{proof}
By Lemma \ref{lemma:2.1}, the operators $R_1:\mathcal P_k^\bot(K)\to \mathcal R_k(\partial K)$ and $R_2:\boldsymbol{\mathcal P}_k^\bot(K)\to \mathcal R_k(\partial K)$, given by
\[
R_1u:=u|_{\partial K}, \qquad R_2\mathbf q:=\mathbf q\cdot\mathbf n,
\]
are one-to-one. On the other hand
\[
\langle R_1u,R_2\mathbf q\rangle_{\partial K}=\langle u,\mathbf q\cdot\mathbf n\rangle_{\partial K}=(\nabla u,\mathbf q)_K+(u,\mathrm{div}\,\mathbf q)_K=0,
\]
since $\nabla u\in \boldsymbol{\mathcal P}_{k-1}(K)$ and $\mathrm{div}\,\mathbf q\in \mathcal P_{k-1}(K)$. This means that the sum $\mathrm{Range}\,R_1\oplus\mathrm{Range}\, R_2$ (the right-hand side of \eqref{eq:11}) is orthogonal. The result follows from an easy dimension count:
\begin{alignat*}{4}
\mathrm{dim} (\mathrm{Range}\,R_1\,\oplus\,\mathrm{Range}\, R_2) &=\mathrm{dim}\,\mathrm{Range}\, R_1 \,+\,\mathrm{dim}\,\mathrm{Range}\, R_2 &\qquad &\mbox{(direct sum)}\\
&=\mathrm{dim}\,\mathcal P_k^\bot(K)\,+\,\mathrm{dim}\,\boldsymbol{\mathcal P}_k^\bot(K) &&\mbox{($R_1$ and $R_2$ are 1-1)}\\
&=(d+1)\mathrm{dim}\, \mathcal P_k^\bot(K) \\
&=\mathrm{dim} \,\mathcal R_k(\partial K). & &\mbox{(by \eqref{eq:10})}
\end{alignat*}
This finishes the proof. (This simple lemma appears in \cite{CoSa:TA}.)
\end{proof}

\paragraph{Polynomials and Piola transforms.}
It is also easy to note that polynomials are preserved by the changes of variables, in both possible roles of primal and dual functions:
\begin{alignat*}{6}
\mbox{Primal} &\qquad & 
u\in \mathcal P_k(K) &\qquad \Longleftrightarrow\qquad \widehat u\in \mathcal P_k(\widehat K),\\ 
&  &\mathbf q \in \boldsymbol{\mathcal P}_k(K)  &\qquad\Longleftrightarrow\qquad   \widehat{\mathbf q}\in \boldsymbol{\mathcal P}_k(\widehat K), \\
& & \mu \in \mathcal R_k(\partial K) &\qquad\Longleftrightarrow\qquad \widehat\mu \in \mathcal R_k(\partial\widehat K),\\
\mbox{Dual} & &
u^*\in \mathcal P_k(K) &\qquad \Longleftrightarrow\qquad \widecheck u^*\in \mathcal P_k(\widehat K),\\ 
& & \mathbf q^* \in \boldsymbol{\mathcal P}_k(K)  &\qquad\Longleftrightarrow\qquad   \widecheck{\mathbf q}^*\in \boldsymbol{\mathcal P}_k(\widehat K), \\
& & \mu^* \in \mathcal R_k(\partial K) &\qquad\Longleftrightarrow\qquad \widecheck\mu^* \in \mathcal R_k(\partial\widehat K).
\end{alignat*}
These relations and \eqref{eq:0} show that the spaces $\mathcal P_k^\bot(K)$ and $\boldsymbol{\mathcal P}_k^\bot(K)$ are also preserved with the changes of variables:
\begin{alignat*}{6}
\mbox{Primal} &\qquad & 
u\in \mathcal P_k^\bot(K) &\qquad \Longleftrightarrow\qquad \widehat u\in \mathcal P_k^\bot(\widehat K),\\ 
&  &\mathbf q \in \boldsymbol{\mathcal P}_k^\bot(K)  &\qquad\Longleftrightarrow\qquad   \widehat{\mathbf q}\in \boldsymbol{\mathcal P}_k^\bot(\widehat K), \\
\mbox{Dual} & &
u^*\in \mathcal P_k^\bot(K) &\qquad \Longleftrightarrow\qquad \widecheck u^*\in \mathcal P_k^\bot(\widehat K),\\ 
& & \mathbf q^* \in \boldsymbol{\mathcal P}_k^\bot(K)  &\qquad\Longleftrightarrow\qquad   \widecheck{\mathbf q}^*\in \boldsymbol{\mathcal P}_k^\bot(\widehat K). \\
\end{alignat*}

\subsection{The space and the projection}

\paragraph{The Raviart-Thomas space.} The RT space in $K$ is defined as
\[
\mathcal{RT}_k(K):=\boldsymbol{\mathcal P}_k(K)\oplus \mathbf m\,\widetilde{\mathcal P}_k(K) \qquad \mbox{(recall that $\mathbf m(\mathbf x)=\mathbf x$)}.
\]
It is quite obvious that
\[
\boldsymbol{\mathcal P}_k(K)\subset \mathcal{RT}_k(K)\subset \boldsymbol{\mathcal P}_{k+1}(K),
\]
both inclusions being proper,
and
\begin{equation}\label{eq:12}
\mathrm{dim}\, \mathcal{RT}_k(K)=d\,{k+d\choose d}+{k+d-1\choose d-1}=\mathrm{dim}\, \boldsymbol{\mathcal P}_{k-1}(K)+\mathrm{dim}\,\mathcal R_k(\partial K).
\end{equation}
(The last equality takes one minute to prove.) Slightly less obvious facts are collected in the next proposition.

\begin{proposition}\label{prop:2.3}\
\begin{itemize}
\item[{\rm (a)}] $\mathbf q\cdot\mathbf n\in \mathcal R_k(\partial K)$ for all $\mathbf q\in \mathcal{RT}_k(K)$.
\item[{\rm (b)}] $\mathbf q\in \mathcal{RT}_k(K)$ if and only if $\widehat{\mathbf q}\in \mathcal{RT}_k(\widehat K)$.
\item[{\rm (c)}] If $\mathrm{div}\,\mathbf q=0$ with $\mathbf q\in \mathcal{RT}_k(K)$, then $\mathbf q\in \boldsymbol{\mathcal P}_k(K)$.
\item[{\rm (d)}] $\mathrm{div}\,\mathcal{RT}_k(K)=\mathcal P_k(K)$.
\end{itemize}
\end{proposition}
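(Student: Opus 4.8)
The plan is to write a generic element as $\mathbf q=\mathbf p+\mathbf m\,r$ with $\mathbf p\in\boldsymbol{\mathcal P}_k(K)$ and $r\in\widetilde{\mathcal P}_k(K)$, and to rest the whole argument on two elementary identities. The first is that on a face $e\subset\{\mathbf x\cdot\mathbf n_e=c\}$ the function $\mathbf m\cdot\mathbf n_e=\mathbf x\cdot\mathbf n_e$ equals the constant $c$. The second is Euler's identity $\mathbf x\cdot\nabla r=k\,r$ for $r$ homogeneous of degree $k$, which gives $\mathrm{div}(\mathbf m\,r)=d\,r+\mathbf x\cdot\nabla r=(d+k)\,r$. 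With these in hand, parts (a), (c), (d) are short.

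For (a) I would restrict to a single face $e$ and write $\mathbf q\cdot\mathbf n_e=\mathbf p\cdot\mathbf n_e+(\mathbf x\cdot\mathbf n_e)\,r$; since $\mathbf x\cdot\mathbf n_e=c$ on $e$, this restricts to $\mathbf p\cdot\mathbf n_e|_e+c\,r|_e$, and both $\mathbf p\cdot\mathbf n_e$ and $r$ restrict to elements of $\mathcal P_k(e)$. For (c), the Euler identity gives $\mathrm{div}\,\mathbf q=\mathrm{div}\,\mathbf p+(d+k)\,r$ with $\mathrm{div}\,\mathbf p\in\mathcal P_{k-1}(K)$ and $(d+k)\,r\in\widetilde{\mathcal P}_k(K)$; matching the homogeneous degree-$k$ component of $\mathrm{div}\,\mathbf q=0$ forces $(d+k)\,r=0$, hence $r=0$ and $\mathbf q=\mathbf p\in\boldsymbol{\mathcal P}_k(K)$. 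For (d) the inclusion $\mathrm{div}\,\mathcal{RT}_k(K)\subseteq\mathcal P_k(K)$ is the same computation; for the reverse I would split $w\in\mathcal P_k(K)$ as $w=w_k+\tilde w$ with $w_k\in\widetilde{\mathcal P}_k(K)$ and $\tilde w\in\mathcal P_{k-1}(K)$, take $\tfrac1{d+k}\mathbf m\,w_k\in\mathbf m\,\widetilde{\mathcal P}_k(K)$ (divergence $w_k$) and an antiderivative of $\tilde w$ in the first coordinate to get $\mathbf p\in\boldsymbol{\mathcal P}_k(K)$ with $\mathrm{div}\,\mathbf p=\tilde w$; their sum lies in $\mathcal{RT}_k(K)$ and maps to $w$. (A rank-nullity count using (c) and $\dim\mathcal{RT}_k(K)$ from \eqref{eq:12} would give the same conclusion.)

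Part (b) is the one genuinely delicate point, and I expect it to be the main obstacle. The $\boldsymbol{\mathcal P}_k$ summand transforms correctly by the primal polynomial correspondence already recorded in the ``Polynomials and Piola transforms'' paragraph, so everything reduces to the image of $\mathbf m\,r$. Using $\mathbf m\circ\mathrm F=\mathrm F=\mathrm B(\punto)+\mathbf b$ one computes $\widehat{\mathbf m\,r}=|J|\,\mathrm B^{-1}(\mathrm B\widehat{\mathbf x}+\mathbf b)(r\circ\mathrm F)=|J|\,(\widehat{\mathbf x}+\mathrm B^{-1}\mathbf b)\,\widehat r$, where $\widehat r:=r\circ\mathrm F\in\mathcal P_k(\widehat K)$. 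The snag is that the affine shift $+\mathbf b$ destroys homogeneity, so $\widehat r$ is in general \emph{not} homogeneous and $\widehat{\mathbf m\,r}$ is not visibly of the form (variable)$\times$(homogeneous degree $k$). The fix is to split $\widehat r=\widehat r_k+\widehat r_{<k}$ into its degree-$k$ homogeneous part and a remainder of degree $\le k-1$: then $\widehat{\mathbf x}\,\widehat r_k$ lands in the Raviart--Thomas summand of $\mathcal{RT}_k(\widehat K)$, while $\widehat{\mathbf x}\,\widehat r_{<k}$ and $|J|\,\mathrm B^{-1}\mathbf b\,\widehat r$ both lie in $\boldsymbol{\mathcal P}_k(\widehat K)$, so $\widehat{\mathbf q}\in\mathcal{RT}_k(\widehat K)$. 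The converse direction follows by running the identical argument with the affine map $\mathrm G=\mathrm F^{-1}$, or, more economically, by noting that the Piola transform is a linear bijection and that $\dim\mathcal{RT}_k(K)=\dim\mathcal{RT}_k(\widehat K)$ depends only on $k$ and $d$, so an injection into a space of equal dimension is onto.
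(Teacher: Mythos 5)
Your proof is correct and takes essentially the same route as the paper: the constancy of $\mathbf x\cdot\mathbf n_e$ on each face for (a), the transformation formula $\widehat{\mathbf m}(\widehat{\mathbf x})=|J|(\widehat{\mathbf x}+\mathrm B^{-1}\mathbf b)$ combined with splitting off the top homogeneous part for (b), and Euler's identity $\mathrm{div}(\mathbf m\,r)=(d+k)r$ for (c) and (d). The only cosmetic differences are that you spell out the details behind (b), which the paper compresses into ``follows from \eqref{eq:9}'', and that in (d) you invert the divergence on the lower-degree part of $w$ by an explicit antiderivative, whereas the paper applies Euler's theorem to every homogeneous component of $w$ at once via $\mathbf m\,\mathcal P_k(K)\subset\mathcal{RT}_k(K)$ --- both are equally valid.
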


\begin{proof} 
It is clear that to prove (a)-(b) we only need to worry about functions $\mathbf m\, p$, where $p\in \widetilde{\mathcal P}_k(K)$. It is also clear that $\mathbf m\cdot\mathbf n \in \mathcal R_0(\partial K)$ (the faces are parts of planes with normal vector $\mathbf n$, so $\mathbf x\cdot\mathbf n=c$). Then $(\mathbf m\,p)|_{\partial K}\cdot\mathbf n\in \mathcal R_0(\partial K)\cdot \mathcal R_{k}(\partial K)\subset \mathcal R_k(\partial K)$, which proves (a). Part (b) follows from \eqref{eq:9}, that is from the fact that $\widehat{\mathbf m}(\widehat{\mathbf x})=|J|\widehat{\mathbf x}+\mathbf c$.

If $\mathbf q=\mathbf p+\mathbf m\,p$, with $\mathbf p\in \boldsymbol{\mathcal P}_k(K)$ and $p\in \widetilde{\mathcal P}_k(K)$,
then by Euler's homogeneous function theorem:
\begin{equation}\label{eq:13A}
\mathrm{div}(\mathbf p+\mathbf m\, p)=\mathrm{div}\,\mathbf p+\mathbf m\cdot\nabla p+ (\mathrm{div}\,\mathbf m)\, p =\mathrm{div}\,\mathbf p+ (k+d)\, p\in \mathcal P_{k-1}(K)\oplus\widetilde{\mathcal P}_k(K),
\end{equation}
and therefore $p=0$. This proves (c).

Since $\mathcal{RT}_k(K)\subset\boldsymbol{\mathcal P}_{k+1}(K)$, it is obvious that $\mathrm{div}\,\mathcal{RT}_k(K)\subseteq\mathcal P_k(K)$. Given now $u\in \mathcal P_k(K)$, we write
\[
u=u_0+u_1+\ldots+u_k, \qquad u_j\in \widetilde{\mathcal P}_j(K)\quad \forall j,
\]
and then use Euler's homogeneous function theorem and the computation in \eqref{eq:13A} to guess
\[
\mathbf p = \bigg(\sum_{j=0}^k \smallfrac1{j+d} u_j\bigg)\,\mathbf m \in \mathbf m \mathcal P_k(K)\subset \mathcal{RT}_k(K).
\]
A simple computation shows then that $\mathrm{div}\,\mathbf p=u$.
\end{proof}

\begin{framed}
\noindent{\bf The Raviart-Thomas projection.} Let $\mathbf q:K\to\mathbb R^d$ be sufficiently smooth. The RT projection is $\boldsymbol\Pi^{\mathrm{RT}}\mathbf q\in \mathcal{RT}_k(K)$ characterized by the equations
\begin{subequations}\label{eq:RT}
\begin{alignat}{4}
\label{eq:RTa}
(\boldsymbol\Pi^{\mathrm{RT}}\mathbf q,\mathbf r)_K &=(\mathbf q,\mathbf r)_K &\qquad &\forall \mathbf r\in \boldsymbol{\mathcal P}_{k-1}(K),\\
\langle \boldsymbol\Pi^{\mathrm{RT}}\mathbf q\cdot\mathbf n,\mu\rangle_{\partial K}&=\langle \mathbf q\cdot\mathbf n,\mu\rangle_{\partial K} & &\forall \mu \in \mathcal R_k(\partial K).
\end{alignat}
Attached to this projection, there is a scalar field projection, $\Pi_k$, which is just the $L^2(K)$-projection onto $\mathcal P_k(K)$:
\begin{equation}
(\Pi_k u,v)_K=(u,v)_K \qquad \forall v\in \mathcal P_k(K).
\end{equation}
\end{subequations}
Note that as $\mathcal P_{-1}(K)=0$, equations \eqref{eq:RTa} are void for $k=0$.
\end{framed}

\begin{proposition}[Definition of the RT projection]
Equations \eqref{eq:RT} are uniquely solvable and therefore define a projection onto $\mathcal{RT}_k(K)$.
\end{proposition}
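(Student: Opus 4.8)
The plan is to recognize that \eqref{eq:RT} is a \emph{square} linear system and to reduce unique solvability to injectivity of its homogeneous counterpart. The number of unknowns is $\dimm\mathcal{RT}_k(K)$, while \eqref{eq:RTa} imposes $\dimm\boldsymbol{\mathcal P}_{k-1}(K)$ scalar conditions (testing against a basis of $\boldsymbol{\mathcal P}_{k-1}(K)$) and the boundary equation imposes $\dimm\mathcal R_k(\partial K)$ conditions (testing against a basis of $\mathcal R_k(\partial K)$). By the dimension count \eqref{eq:12} these totals coincide, so it suffices to show that any $\mathbf q\in\mathcal{RT}_k(K)$ satisfying the homogeneous equations $(\mathbf q,\mathbf r)_K=0$ for all $\mathbf r\in\boldsymbol{\mathcal P}_{k-1}(K)$ and $\langle\mathbf q\cdot\mathbf n,\mu\rangle_{\partial K}=0$ for all $\mu\in\mathcal R_k(\partial K)$ must vanish.

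First I would dispose of the boundary data. By Proposition \ref{prop:2.3}(a) we have $\mathbf q\cdot\mathbf n\in\mathcal R_k(\partial K)$, so $\mathbf q\cdot\mathbf n$ is itself an admissible test function; choosing $\mu=\mathbf q\cdot\mathbf n$ in the homogeneous boundary equation yields $\|\mathbf q\cdot\mathbf n\|_{\partial K}^2=0$, hence $\mathbf q\cdot\mathbf n=0$ on $\partial K$. Next I would annihilate the divergence. By Proposition \ref{prop:2.3}(d) we know $\mathrm{div}\,\mathbf q\in\mathcal P_k(K)$, and Green's formula gives, for every $v\in\mathcal P_k(K)$,
\[
(\mathrm{div}\,\mathbf q,v)_K=-(\mathbf q,\nabla v)_K+\langle\mathbf q\cdot\mathbf n,v\rangle_{\partial K}.
\]
The boundary term vanishes by the previous step, and the volume term vanishes because $\nabla v\in\boldsymbol{\mathcal P}_{k-1}(K)$ and $\mathbf q$ satisfies the first homogeneous condition. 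Taking $v=\mathrm{div}\,\mathbf q$ then forces $\mathrm{div}\,\mathbf q=0$.

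Finally, Proposition \ref{prop:2.3}(c) upgrades $\mathrm{div}\,\mathbf q=0$ to the statement $\mathbf q\in\boldsymbol{\mathcal P}_k(K)$. Combined with the orthogonality $(\mathbf q,\mathbf r)_K=0$ for all $\mathbf r\in\boldsymbol{\mathcal P}_{k-1}(K)$, this places $\mathbf q\in\boldsymbol{\mathcal P}_k^\bot(K)$; since we already have $\mathbf q\cdot\mathbf n=0$ on $\partial K$, Lemma \ref{lemma:2.1}(b) gives $\mathbf q=\mathbf 0$. This establishes injectivity, hence unique solvability, and the resulting map $\mathbf q\mapsto\boldsymbol\Pi^{\mathrm{RT}}\mathbf q$ is a projection because the defining equations \eqref{eq:RT} are clearly satisfied with equality when $\mathbf q\in\mathcal{RT}_k(K)$ already.

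I do not expect a genuine analytic obstacle here: there are no scaling arguments or estimates, only a clean three-stage cascade, \emph{boundary} $\to$ \emph{divergence} $\to$ \emph{interior}. The only point requiring care is bookkeeping, namely invoking each structural fact exactly where the corresponding test function lives in the right space: part (a) of Proposition \ref{prop:2.3} to let $\mathbf q\cdot\mathbf n$ test against itself, part (d) to let $\mathrm{div}\,\mathbf q$ test against itself, part (c) to reenter $\boldsymbol{\mathcal P}_k(K)$, and the identity \eqref{eq:12} to justify reducing solvability to injectivity in the first place.
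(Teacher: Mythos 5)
Your proof is correct and follows essentially the same route as the paper: reduce to injectivity of the square system via \eqref{eq:12}, use Green's formula to force $\mathrm{div}\,\mathbf q=0$, then apply Proposition \ref{prop:2.3}(c) and Lemma \ref{lemma:2.1}(b) to conclude $\mathbf q=\mathbf 0$. The only (harmless) difference is ordering: you first extract $\mathbf q\cdot\mathbf n=0$ pointwise via Proposition \ref{prop:2.3}(a), whereas the paper uses the homogeneous boundary equation weakly, with $(\mathrm{div}\,\mathbf q)|_{\partial K}$ as test function, and deduces $\mathbf q\cdot\mathbf n=0$ only after knowing $\mathbf q\in\boldsymbol{\mathcal P}_k(K)$.
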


\begin{proof}
Note that \eqref{eq:12} implies that \eqref{eq:RT} is equivalent to a square system of linear equations, so we only need to prove uniqueness of solution. Let then $\mathbf q\in \mathcal{RT}_k(K)$ satisfy
\begin{subequations}
\begin{alignat}{4}\label{eq:14a}
(\mathbf q,\mathbf r)_K& =0 &\qquad  &\forall \mathbf r\in \boldsymbol{\mathcal P}_{k-1}(K),\\
\label{eq:14b}
\langle \mathbf q\cdot\mathbf n,\mu\rangle_{\partial K}&=0 & &\forall \mu \in \mathcal R_k(\partial K).
\end{alignat}
\end{subequations}
Then
\[
\|\mathrm{div}\,\mathbf q\|_K^2 =\langle \mathbf q\cdot\mathbf n,(\mathrm{div}\,\mathbf q)|_{\partial K}\rangle_{\partial K}-(\mathbf q,\nabla (\mathrm{div}\mathbf q))_K=0
\]
by \eqref{eq:14a} and \eqref{eq:14b}. This implies that $\mathrm{div}\,\mathbf q=0$ and then, by Proposition \ref{prop:2.3}(d), it follows that $\mathbf q\in \boldsymbol{\mathcal P}_k(K)$. Then \eqref{eq:14a} means that $\mathbf q\in \boldsymbol{\mathcal P}_k^\bot(K)$, while \eqref{eq:14b} implies that $\mathbf q\cdot\mathbf n=0$. Using now Lemma \ref{lemma:2.2}(b), it follows that $\mathbf q=\mathbf 0$.
\end{proof}

\paragraph{The commutativity property.} Note that for all $u\in \mathcal P_k(K)$,
\begin{alignat*}{4}
(\mathrm{div}\,\boldsymbol\Pi^{\mathrm{RT}}\mathbf q,u)_K &=\langle\boldsymbol\Pi^{\mathrm{RT}}\mathbf q\cdot\mathbf n,u\rangle_{\partial K}-(\mathbf \Pi^{\mathrm{RT}}\mathbf q,\nabla u)_K\\
&=\langle \mathbf q\cdot\mathbf n,u\rangle_{\partial K}-(\mathbf q,\nabla u)_K\\
&=(\mathrm{div}\,\mathbf q,u)_K,
\end{alignat*}
i.e.
\begin{equation}\label{eq:commRT}
\mathrm{div}\,\boldsymbol\Pi^{\mathrm{RT}}\mathbf q =\Pi_k \mathrm{div}\,\mathbf q.
\end{equation}

\paragraph{Invariance by Piola transforms.} Our next goal is to relate the RT projection in the physical element \eqref{eq:RT} with the one defined in the reference element: given $\widehat{\mathbf q}$ we look for $\widehat{\boldsymbol\Pi}^{\mathrm{RT}}\widehat{\mathbf q}\in \mathcal{RT}_k(\widehat K)$, satisfying
\begin{subequations}\label{eq:17}
\begin{alignat}{4}
(\widehat{\boldsymbol\Pi}^{\mathrm{RT}}\widehat{\mathbf q},\mathbf r)_{\widehat K} &=(\widehat{\mathbf q},\mathbf r)_{\widehat K} &\qquad &\forall \mathbf r\in \boldsymbol{\mathcal P}_{k-1}(\widehat K),\\
\langle \widehat{\boldsymbol\Pi}^{\mathrm{RT}}\widehat{\mathbf q}\cdot\widehat{\mathbf n},\mu\rangle_{\partial\widehat K}&=\langle \widehat{\mathbf q}\cdot\widehat{\mathbf n},\mu\rangle_{\partial\widehat K} & &\forall \mu \in \mathcal R_k(\partial\widehat K).
\end{alignat}
\end{subequations}
Note that by \eqref{eq:0} 
\[
(\widehat{\boldsymbol\Pi^{\mathrm{RT}}\mathbf q},\widecheck{\mathbf r})_{\widehat K} =(\boldsymbol\Pi^{\mathrm{RT}}\mathbf q,\mathbf r)_K =(\mathbf q,\mathbf r)_K=(\widehat{\mathbf q},\widecheck{\mathbf r})_{\widehat K}\qquad \forall \mathbf r\in \boldsymbol{\mathcal P}_{k-1}(K),
\]
and by \eqref{eq:2c}
\[
\langle\widehat{\boldsymbol\Pi^{\mathrm{RT}}\mathbf q}\cdot\widehat{\mathbf n},\widecheck\mu\rangle_{\partial\widehat K}=\langle \boldsymbol\Pi^{\mathrm{RT}}\mathbf q\cdot\mathbf n,\mu\rangle_{\partial K}=\langle \mathbf q\cdot\mathbf n,\mu\rangle_{\partial K}=\langle\widehat{\mathbf q}\cdot\widehat{\mathbf n},\widecheck\mu\rangle_{\partial\widehat K}\qquad \forall \mu \in \mathcal R_k(\partial K).
\]
However, since the test spaces transform well under the check rules and so does the RT space w.r.t. the hat rule (Proposition \ref{prop:2.3}(c)), it follows that
\begin{equation}\label{eq:18}
\widehat{\boldsymbol\Pi}^{\mathrm{RT}}\widehat{\mathbf q}=\widehat{\boldsymbol\Pi^{\mathrm{RT}}\mathbf q}.
\end{equation}

\subsection{Estimates and liftings}\label{sec:2.3}

By looking at the equations on the reference element \eqref{eq:17}, and using a basis of the space $\mathcal{RT}_k(K)$, it is easy to see how
\begin{equation}\label{eq:19}
\| \widehat{\boldsymbol\Pi}^{\mathrm{RT}}\widehat{\mathbf q}\|_{\widehat K}\lesssim \|\widehat{\mathbf q}\|_{\widehat K}+\|\widehat{\mathbf q}\cdot\widehat{\mathbf n}\|_{\partial\widehat K}\lesssim \|\widehat{\mathbf q}\|_{1,\widehat K}\qquad \forall \widehat{\mathbf q}\in \mathbf H^1(\widehat K):=H^1(\widehat K)^d.
\end{equation}
This inequality actually shows how the RT projection is well defined on $\mathbf H^{\frac12+\varepsilon}(\widehat K)$, which is a space that guarantees the existence of a classical trace operator, so that $\widehat{\mathbf q}\cdot\widehat{\mathbf n}\in L^2(\partial \widehat K)$. (We will not deal with these low regularity cases in these notes though.) Another easy fact follows from a compactness argument (a.k.a. the Bramble-Hilbert lemma): since $\widehat\Pi^{\mathrm{RT}}$ preserves the space $\boldsymbol{\mathcal P}_k(\widehat K)\subset \mathcal {RT}_k(\widehat K)$, then
\begin{equation}\label{eq:20}
\|\qq-\widehat{\boldsymbol\Pi}^{\mathrm{RT}}\qq\|_{\widehat K}\lesssim |\qq|_{k+1,\widehat K}\qquad \forall \qq\in\mathbf H^{k+1}(\widehat K).
\end{equation}

\begin{proposition}[Estimates for the RT projection]\label{prop:2.4}
On shape-regular triangulations and for sufficiently smooth $\mathbf q$,
\begin{itemize}
\item[{\rm (a)}]
$
\| \boldsymbol\Pi^{\mathrm{RT}}\mathbf q\|_K \lesssim \|\mathbf q\|_K+ h_K|\mathbf q|_{1,K},
$
\item[{\rm (b)}]
$
\|\mathbf q-\boldsymbol\Pi^{\mathrm{RT}}\mathbf q\|_K \lesssim h^{k+1}_K|\mathbf q|_{k+1,K},
$
\item[{\rm (c)}]
$
\|\mathrm{div}\,\mathbf q-\mathrm{div}\,\boldsymbol\Pi^{\mathrm{RT}}\mathbf q\|_K\lesssim h^{k+1}_K|\mathrm{div}\,\mathbf q|_{k+1,K}.
$
\end{itemize}
\end{proposition}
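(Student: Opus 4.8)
The plan is to prove all three bounds by the same two-step recipe: transport the quantity to the reference element using the Piola invariance \eqref{eq:18}, invoke the reference-element estimates \eqref{eq:19} and \eqref{eq:20} (and, for (c), a scalar analogue), and then transport back, letting the scaling relations \eqref{eq:6} and \eqref{eq:8} account for the powers of $h_K$. Throughout, the structural fact I would rely on is that the hat transform is linear and commutes with the RT projection, so that $\widehat{\mathbf q-\boldsymbol\Pi^{\mathrm{RT}}\mathbf q}=\widehat{\mathbf q}-\widehat{\boldsymbol\Pi}^{\mathrm{RT}}\widehat{\mathbf q}$. The entire difficulty is thus concentrated in part (c); parts (a) and (b) are essentially bookkeeping of exponents.

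For (a), I would start from \eqref{eq:6} and \eqref{eq:18}, writing $\|\boldsymbol\Pi^{\mathrm{RT}}\mathbf q\|_K\approx h_K^{1-d/2}\|\widehat{\boldsymbol\Pi}^{\mathrm{RT}}\widehat{\mathbf q}\|_{\widehat K}$, bound the reference norm by the stability estimate \eqref{eq:19} as $\lesssim\|\widehat{\mathbf q}\|_{\widehat K}+|\widehat{\mathbf q}|_{1,\widehat K}$, and then reconvert using \eqref{eq:6} and \eqref{eq:8} in the forms $\|\widehat{\mathbf q}\|_{\widehat K}\approx h_K^{d/2-1}\|\mathbf q\|_K$ and $|\widehat{\mathbf q}|_{1,\widehat K}\approx h_K^{d/2}|\mathbf q|_{1,K}$. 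The products $h_K^{1-d/2}h_K^{d/2-1}=1$ and $h_K^{1-d/2}h_K^{d/2}=h_K$ then reproduce exactly the two terms on the right-hand side. For (b), the identical transport gives $\|\mathbf q-\boldsymbol\Pi^{\mathrm{RT}}\mathbf q\|_K\approx h_K^{1-d/2}\|\widehat{\mathbf q}-\widehat{\boldsymbol\Pi}^{\mathrm{RT}}\widehat{\mathbf q}\|_{\widehat K}$; I would then apply the Bramble–Hilbert-type bound \eqref{eq:20} to obtain $\lesssim h_K^{1-d/2}|\widehat{\mathbf q}|_{k+1,\widehat K}$, and finally \eqref{eq:8} with $m=k+1$, namely $|\widehat{\mathbf q}|_{k+1,\widehat K}\approx h_K^{k+d/2}|\mathbf q|_{k+1,K}$, to collapse the powers to $h_K^{k+1}$.

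Part (c) is where a genuinely new idea is required, and it is the step I would flag as the crux. Rather than attempting to control the divergence of the vector error directly (which would cost a derivative and the wrong power of $h_K$), I would invoke the commutativity property \eqref{eq:commRT}, which turns the left-hand side into $\|\mathrm{div}\,\mathbf q-\Pi_k(\mathrm{div}\,\mathbf q)\|_K$, a pure $L^2$-projection error for the scalar field $u:=\mathrm{div}\,\mathbf q$. It then suffices to establish the standard estimate $\|u-\Pi_k u\|_K\lesssim h_K^{k+1}|u|_{k+1,K}$, which I would prove just as in (b): since $\Pi_k u$ is the $L^2(K)$-best approximation in $\mathcal P_k(K)$, for every $p\in\mathcal P_k(K)$ one has $\|u-\Pi_k u\|_K\le\|u-p\|_K\approx h_K^{d/2}\|\widehat u-\widehat p\|_{\widehat K}$; minimising over $p$ (equivalently over $\widehat p\in\mathcal P_k(\widehat K)$), Bramble–Hilbert on the fixed reference element gives $\inf_{\widehat p}\|\widehat u-\widehat p\|_{\widehat K}\lesssim|\widehat u|_{k+1,\widehat K}$, and \eqref{eq:8} with $m=k+1$ restores the factor $h_K^{k+1}$ via $h_K^{d/2}h_K^{k+1-d/2}=h_K^{k+1}$. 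Applying this with $u=\mathrm{div}\,\mathbf q$ finishes the proof.

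The main obstacle, then, is not any single estimate but the temptation to treat (c) by differentiating (b); the resolution is that the commuting-diagram identity \eqref{eq:commRT} decouples the divergence error entirely from the vector error and reduces it to a scalar best-approximation problem, after which the same scaling-plus-Bramble–Hilbert machinery applies verbatim.
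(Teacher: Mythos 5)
Your proposal is correct and follows essentially the same route as the paper: transport to the reference element via the Piola invariance \eqref{eq:18}, apply the reference estimates \eqref{eq:19}--\eqref{eq:20}, scale back with \eqref{eq:6} and \eqref{eq:8} (the paper uses the more primitive forms \eqref{eq:4}, \eqref{eq:7} plus shape-regularity \eqref{eq:5}, which it notes are interchangeable), and handle (c) by the commutativity identity \eqref{eq:commRT} reducing everything to the scalar $L^2$-projection error. The only cosmetic difference is that for (c) you invoke the best-approximation property of $\Pi_k$ to bypass the paper's ``easy argument'' that the $L^2$-projection commutes with the change of variables; both are valid and equivalent in substance.
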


\begin{proof}
The results follow from the estimates in the reference element \eqref{eq:19}-\eqref{eq:20}, the relation between the projection and the projection on the reference element \eqref{eq:18}, and scaling arguments \eqref{eq:6} and \eqref{eq:8} (or their more primitive forms in \eqref{eq:4} and \eqref{eq:7}). For instance
\begin{alignat*}{4}
\|\boldsymbol\Pi^{\mathrm{RT}}\mathbf q\|_K & \le |J_K|^{-1/2} \|\mathrm B_K\|\|\widehat{\boldsymbol\Pi^{\mathrm{RT}}\mathbf q}\|_{\widehat K} &\qquad & \mbox{(by \eqref{eq:4})}\\
& = |J_K|^{-1/2} \|\mathrm B_K\|\|\widehat{\boldsymbol\Pi}^{\mathrm{RT}}\qq\|_{\widehat K} &\qquad & \mbox{(by \eqref{eq:18})}\\
& \lesssim |J_K|^{-1/2} \|\mathrm B_K\| \|\qq\|_{1,\widehat K} & & \mbox{(by \eqref{eq:19})}\\
& \lesssim \| \mathrm B_K\|\,\|\mathrm B_K^{-1}\|\,(\|\mathbf q\|_K+ \|\mathrm B_K\|\,|\mathbf q|_{1,K}). & & \mbox{(by \eqref{eq:4} and \eqref{eq:7})}
\end{alignat*} 
Similarly 
\begin{alignat*}{4}
\|\mathbf q-\boldsymbol\Pi^{\mathrm{RT}}\mathbf q\|_K &\le |J|^{-1/2} \|\mathrm B_K\| \,\|\qq-\widehat{\boldsymbol\Pi^{\mathrm{RT}}\mathbf q}\|_{\widehat K} &\qquad &\mbox{(by \eqref{eq:4})}\\
& = |J_K|^{-1/2} \|\mathrm B_K\| \,\|\qq-\widehat{\boldsymbol\Pi}^{\mathrm{RT}}\qq\|_{\widehat K} &\qquad &\mbox{(by \eqref{eq:18})}\\
& \lesssim |J_K|^{-1/2} \|\mathrm B_K\|\, |\qq|_{k+1,\widehat K} & &\mbox{(by \eqref{eq:20}, i.e., Bramble-Hilbert)}\\
& \le \| \mathrm B_K^{-1}\|\,\|\mathrm B_K\|^{k+2} |\mathbf q|_{k+1,K}. & & \mbox{(by \eqref{eq:7})}
\end{alignat*}
From these inequalities to (a) and (b) we only need to use the shape-regularity bounds \eqref{eq:5}. To prove (c) we use the commutation property \eqref{eq:commRT} and a bunch of scaling arguments:
\begin{alignat*}{4}
\|\mathrm{div}\,\mathbf q-\mathrm{div} \,\boldsymbol\Pi^{\mathrm{RT}}\mathbf q\|_K &= \|\mathrm{div}\,\mathbf q-\Pi_k\mathrm{div}\,\mathbf q\|_K &\qquad & \mbox{(by commutativity \eqref{eq:commRT})}\\
& = |J_K|^{-1/2} \| \widehat{\mathrm{div}\,\mathbf q} - \widehat{\Pi_k\mathrm{div}\,\mathbf q}\|_{\widehat K} & & \mbox{(by \eqref{eq:4})}\\
& = |J_K|^{-1/2} \|\widehat{\mathrm{div}\,\mathbf q}-\widehat\Pi_k \widehat{\mathrm{div}\,\mathbf q}\|_{\widehat K} & &\mbox{(easy argument)}\\
& \lesssim |J_K|^{-1/2} |\widehat{\mathrm{div}\,\mathbf q}|_{k+1,\widehat K} & &\mbox{(compactness-Bramble-Hilbert)}\\
& \le \|\mathrm B_K\|^{k+1} |\mathrm{div}\,\mathbf q|_{k+1,K}. & & \mbox{(by \eqref{eq:7})}
\end{alignat*}
The result now follows readily.
\end{proof}

\begin{proposition}[RT local lifting of the normal trace]\label{prop:2.5}
There exists a linear operator $\mathbf L^{\mathrm{RT}}:\mathcal R_k(\partial K)\to \mathcal{RT}_k(K)$ such that
\[
(\mathbf L^{\mathrm{RT}}\mu)\cdot\mathbf n=\mu \quad \mbox{and}\quad \|\mathbf L^{\mathrm{RT}}\mu\|_K \lesssim h_K^{1/2}\|\mu\|_{\partial K}\qquad \forall \mu \in \mathcal R_k(\partial K).
\]
\end{proposition}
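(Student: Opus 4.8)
The plan is to construct the lifting first on the reference element, where any bound comes with a $K$-independent constant automatically, and then transport it to $K$ by the inverse hat-Piola map, keeping careful track of the fact that the normal trace is a \emph{dual} boundary quantity. I would not attempt to define $\mathbf L^{\mathrm{RT}}$ directly on $K$, since there the operator norm would carry an uncontrolled $K$-dependence that only scaling can tame.

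First I would show that the normal-trace map $\widehat\gamma:\mathcal{RT}_k(\widehat K)\to\mathcal R_k(\partial\widehat K)$, $\widehat{\mathbf q}\mapsto\widehat{\mathbf q}\cdot\widehat{\mathbf n}$, is well defined (Proposition \ref{prop:2.3}(a)) and \emph{surjective}. Surjectivity follows from the unisolvence of the RT degrees of freedom -- the volume moments against $\boldsymbol{\mathcal P}_{k-1}(\widehat K)$ and the normal moments against $\mathcal R_k(\partial\widehat K)$ -- which is exactly what makes \eqref{eq:RT} uniquely solvable on $\widehat K$: given $\widehat\nu\in\mathcal R_k(\partial\widehat K)$, pick the unique $\widehat{\mathbf q}\in\mathcal{RT}_k(\widehat K)$ with vanishing interior moments and $\langle\widehat{\mathbf q}\cdot\widehat{\mathbf n},\mu\rangle_{\partial\widehat K}=\langle\widehat\nu,\mu\rangle_{\partial\widehat K}$ for all $\mu\in\mathcal R_k(\partial\widehat K)$; since $\widehat{\mathbf q}\cdot\widehat{\mathbf n}$ already lies in $\mathcal R_k(\partial\widehat K)$ and agrees with $\widehat\nu$ against all of $\mathcal R_k(\partial\widehat K)$, the two coincide. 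A surjective linear map of finite-dimensional spaces has a linear right inverse, so I fix one, $\widehat{\mathbf L}:\mathcal R_k(\partial\widehat K)\to\mathcal{RT}_k(\widehat K)$, with $(\widehat{\mathbf L}\widehat\nu)\cdot\widehat{\mathbf n}=\widehat\nu$; being linear between finite-dimensional normed spaces it is bounded, $\|\widehat{\mathbf L}\widehat\nu\|_{\widehat K}\lesssim\|\widehat\nu\|_{\partial\widehat K}$, with a constant depending only on $\widehat K$ and $k$.

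Next I transport. The delicate point is \eqref{eq:1c}: the hat-Piola image of $\mathbf q$ satisfies $\widehat{\mathbf q}\cdot\widehat{\mathbf n}=\widecheck{\mathbf q\cdot\mathbf n}=|a|\,(\mathbf q\cdot\mathbf n)\circ\mathrm F|_{\partial\widehat K}$, so the pointwise requirement $\mathbf q\cdot\mathbf n=\mu$ on $\partial K$ is equivalent to $\widehat{\mathbf q}\cdot\widehat{\mathbf n}=|a|\,\widehat\mu$ on $\partial\widehat K$ -- and \emph{not} to $\widehat{\mathbf q}\cdot\widehat{\mathbf n}=\widehat\mu$. Since $|a|$ is piecewise constant, $|a|\widehat\mu\in\mathcal R_k(\partial\widehat K)$, so I define $\widehat{\mathbf q}:=\widehat{\mathbf L}(|a|\widehat\mu)$ and let $\mathbf L^{\mathrm{RT}}\mu:=\mathbf q$ be the inverse hat-Piola transform of $\widehat{\mathbf q}$. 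Proposition \ref{prop:2.3}(b) gives $\mathbf q\in\mathcal{RT}_k(K)$, the construction gives $\mathbf q\cdot\mathbf n=\mu$, and the map is linear in $\mu$ because every step ($\mu\mapsto\widehat\mu$, multiplication by $|a|$, $\widehat{\mathbf L}$, inverse Piola) is linear.

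Finally the estimate is pure scaling bookkeeping. By \eqref{eq:4b}, $\|\mathbf q\|_K\lesssim|J|^{-1/2}\|\mathrm B\|\,\|\widehat{\mathbf q}\|_{\widehat K}$; boundedness of $\widehat{\mathbf L}$ gives $\|\widehat{\mathbf q}\|_{\widehat K}\lesssim\||a|\widehat\mu\|_{\partial\widehat K}\le\|a\|_{L^\infty}\|\widehat\mu\|_{\partial\widehat K}$; and \eqref{eq:4c} gives $\|\widehat\mu\|_{\partial\widehat K}\le\|a^{-1}\|_{L^\infty}^{1/2}\|\mu\|_{\partial K}$. Inserting the shape-regularity bounds \eqref{eq:5}, namely $|J|^{-1/2}\lesssim h_K^{-d/2}$, $\|\mathrm B\|\lesssim h_K$, $\|a\|_{L^\infty}\lesssim h_K^{d-1}$ and $\|a^{-1}\|_{L^\infty}^{1/2}\lesssim h_K^{(1-d)/2}$, the exponents add up to $-\tfrac d2+1+(d-1)+\tfrac{1-d}2=\tfrac12$, which yields $\|\mathbf L^{\mathrm{RT}}\mu\|_K\lesssim h_K^{1/2}\|\mu\|_{\partial K}$. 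I expect the main obstacle to be precisely this dual-quantity bookkeeping: recognizing that on the reference element one must lift $|a|\widehat\mu$ rather than $\widehat\mu$, and then verifying that the accompanying powers of $h_K$ collapse exactly to $h_K^{1/2}$. The construction of $\widehat{\mathbf L}$ itself is routine finite-dimensional linear algebra resting on the already-proved unisolvence.
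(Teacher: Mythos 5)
Your proposal is correct and follows essentially the same route as the paper: the paper also constructs the lift on the reference element by solving the RT degree-of-freedom system with datum $\widecheck\mu=|a|\widehat\mu$ (your $\widehat{\mathbf L}(|a|\widehat\mu)$ with vanishing interior moments is exactly its system \eqref{eq:21}), pulls back by the inverse Piola map, and closes with the identical scaling bounds \eqref{eq:4b}, \eqref{eq:4c}, \eqref{eq:5} yielding the exponent $1/2$. The only cosmetic difference is that you verify $\widehat{\mathbf q}\cdot\widehat{\mathbf n}=\widecheck\mu$ pointwise on $\widehat K$ and transport it via \eqref{eq:1c}, whereas the paper verifies $\mathbf q\cdot\mathbf n=\mu$ weakly on $K$ via \eqref{eq:2c} and then uses finite dimensionality.
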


\begin{proof}
Let $\mathbf q=\mathbf L^{\mathrm{RT}} \mu\in \mathcal{RT}_k(K)$ be defined as
\[
\mathbf q:=|J_K|^{-1}\mathrm B_K\qq\circ\mathrm G_K,
\]
where $\qq\in \mathcal{RT}_k(\widehat K)$ is the solution of the discrete equations in the reference domain:
\begin{subequations}\label{eq:21}
\begin{alignat}{4}
(\qq,\mathbf r)_{\widehat K} &=0 &\qquad &\forall\mathbf r\in \boldsymbol{\mathcal P}_{k-1}(\widehat K),\\
\langle\qq\cdot\widehat{\mathbf n},\xi\rangle_{\partial\widehat K}&=\langle\widecheck\mu,\xi\rangle_{\partial\widehat K} & & \forall \xi\in \mathcal R_k(\partial\widehat K).
\end{alignat}
\end{subequations}
Note that by \eqref{eq:2c}
\[
\langle \mathbf q\cdot\mathbf n,\xi\rangle_{\partial K} =\langle \qq\cdot\widehat{\mathbf n},\widehat\xi\rangle_{\partial\widehat K}=\langle \widecheck\mu,\widehat\xi\rangle_{\partial\widehat K}=\langle\mu,\xi\rangle_{\partial K}\qquad \forall \xi\in \mathcal R_k(\partial K),
\]
and therefore $\mathbf q\cdot\mathbf n=\mu$. Also
\begin{alignat*}{4}
\|\mathbf q\|_K &\le |J_K|^{-1/2} \|\mathrm B_K\| \|\qq\|_{\widehat K} &\qquad &\mbox{(by \eqref{eq:4b})}\\
&\lesssim |J_K|^{-1/2}\|\mathrm B_K\|\,\|\widecheck\mu\|_{\partial\widehat K} & &\mbox{(finite dimensional argument on \eqref{eq:21})}\\
&\lesssim |J_K|^{-1/2} \| \mathrm B_K\|\, \| a\|_{L^\infty}^{1/2} \|\mu\|_{\partial K}, & & \mbox{(simple argument based on \eqref{eq:4c})}
\end{alignat*}
and the bound follows from estimating all the above geometric quantities using \eqref{eq:5}
\[
\lesssim |J_K|^{-1/2} \| \mathrm B_K\|\, \| a\|_{L^\infty}^{1/2} \lesssim h_K^{-\frac{d}2} h_K h_K^{\frac{d-1}2}=h_K^{1/2}.
\]
This finishes the proof.
\end{proof}

\paragraph{More jargon.} When in the middle of a Finite Element argument, we use that we are dealing with polynomials of a fixed degree (or any finite dimensional space) on the reference domain, it is common to refer to the argument as a {\em finite dimensional argument.} This leads to inequalities with constants depending on polynomial degrees and dimension, but on nothing else.

\section{Projection-based analysis of RT}

In this section we are going to develop a fully detailed analysis of the RT approximation of the system
\begin{subequations}\label{eq:22}
\begin{alignat}{4}
\label{eq:22a}
\kappa^{-1}\mathbf q+\nabla u &=0 & \qquad &\mbox{in $\Omega$},\\
\mathrm{div}\,\mathbf q &=f & & \mbox{in $\Omega$},\\
\label{eq:22c}
u&=u_0 & & \mbox{on $\Gamma:=\partial\Omega$},
\end{alignat}
\end{subequations}
where $\Omega$ is a polygonal/polyhedral domain, $f\in L^2(\Omega)$, $\kappa\in L^\infty(\Omega)$ is strictly positive (so that $\kappa^{-1}\in L^\infty(\Omega)$ as well). We are not going to use any of the results of the Brezzi theory of mixed problems \cite{Brezzi:1974}. Our approach is going to be more local and much less elegant. (It has to be noted, that Brezzi's theory and Fortin's 
inversion of the discrete divergence \cite{Fortin:1977} will be constantly in the background, and we will just be repeating ideas that can be expressed in more abstract terms.)

It is very simple to see (no need of mixed variational formulation) that problem \eqref{eq:22} has a unique solution $(\mathbf q,u)\in \mathbf H(\mathrm{div},\Omega)\times H^1(\Omega)$, where
\[
\mathbf H(\mathrm{div},\Omega):=\{ \mathbf q\in \mathbf L^2(\Omega):=L^2(\Omega)^d\,:\, \mathrm{div}\,\mathbf q\in L^2(\Omega)\}.
\]

\paragraph{Discretization.} 
For discretization let us consider a conforming partition $\mathcal T_h$ of $\Omega$ into triangles/tetrahedra, and the discrete spaces
\begin{subequations}
\begin{alignat}{4}
\mathbf V_h&:=\prod_{K\in \mathcal T_h} \mathcal{RT}_h(K)=\{ \mathbf q_h:\Omega\to \mathbb R^d\,:\, \mathbf q_h|_K \in \mathcal{RT}_k(K) \quad\forall K \in \mathcal T_h\},\\
W_h &:=\prod_{K\in\mathcal T_h} \mathcal P_k(K) =\{ u_h :\Omega\to \mathbb R\,:\, u_h|_K\in \mathcal P_k(K)\quad\forall K\in \mathcal T_h\},\\
\mathbf V_h^{\mathrm{div}}&:=\mathbf V_h\cap \mathbf H(\mathrm{div},\Omega).
\end{alignat}
\end{subequations}
The RT approximation of \eqref{eq:22} is a simple Galerkin scheme for a variational formulation of \eqref{eq:22} obtained after integrating by parts in \eqref{eq:22a}, which naturally incorporates the BC \eqref{eq:22c}: we look for
\begin{subequations}\label{eq:24}
\begin{equation}
(\mathbf q_h,u_h)\in \mathbf V_h^{\mathrm{div}}\times W_h
\end{equation}
satisfying
\begin{alignat}{6}
\label{eq:24b}
& (\kappa^{-1}\mathbf q_h,\mathbf r)_\Omega -(u_h,\mathrm{div}\,\mathbf r)_\Omega &&=-\langle u_0,\mathbf r\cdot\mathbf n\rangle_\Gamma &\qquad &\forall \mathbf r\in \mathbf V_h^{\mathrm{div}},\\
\label{eq:24c}
& ( \mathrm{div}\,\mathbf q_h,v)_\Omega &&=(f,v)_\Omega & &\forall v\in W_h.
\end{alignat}
\end{subequations}
Existence and uniqueness of solution of \eqref{eq:24} will follow from the arguments in the next section. Instead of using this Galerkin formulation we will insert Lagrange multipliers to handle the continuity of the normal components of $\mathbf q_h$ across element interfaces: this leads to a formulation with three fields due to Douglas Arnold and Franco Brezzi \cite{ArBr:1985}. In Section 
\ref{sec:4} we will show how to eliminate then interior fields in order to to build a discrete system that has lost the saddle point structure and only contains degrees of freedom on the faces.

\subsection{The Arnold-Brezzi formulation}

\paragraph{Imposing continuity of the normal components.}
The key idea leading to the next equivalent presentation of equations \eqref{eq:24} is an observation about what conditions functions in $\mathbf V_h$ must satisfy in order to belong to the space $\mathbf V_h^{\mathrm{div}}$. Let $K_1, K_2\in \mathcal T_h$ meet in one face $\overline{K_1}\cap \overline{K_2}=\overline e$, with $e\in \mathcal E_h$. Given $\mathbf q_h \in \mathbf V_h$, it is easy to prove (based on Proposition \ref{prop:2.3}(a)) that
\begin{equation}\label{eq:255}
\mathbf q_h\in \mathbf V_h^{\mathrm{div}} \qquad \Longrightarrow \qquad \langle\mathbf q_h|_{K_1}\cdot\mathbf n_1+\mathbf q_h|_{K_2}\cdot\mathbf n_2,\mu\rangle_e=0 \quad\forall \mu\in \mathcal P_k(e). 
\end{equation}
Instead of writing the matching condition in the right-hand side of \eqref{eq:255} for each interior $e\in \mathcal E_h$ looking for the elements on both sides of $e$, we can do as follows. For $\mathbf q:\Omega\to\mathbb R^d$ and $\mu:\cup\{ \overline e\,:\, e\in\mathcal E_h\} \to\mathbb R$, we write
\begin{equation}\label{eq:26}
\langle \mathbf q\cdot\mathbf n,\mu\rangle_{\partial \mathcal T_h\setminus\Gamma}:=\sum_{K\in \mathcal T_h} \langle\mathbf q|_K\cdot\mathbf n_K,\mu\rangle_{\partial K\setminus\Gamma}.
\end{equation}
We then consider the space
\begin{equation}\label{eq:27}
M_h:=\prod_{e\in \mathcal E_h} \mathcal P_k(e) =\{ \mu:\cup\{ \overline e\,:\, e\in\mathcal E_h\} \to\mathbb R\,:\, \mu|_e\in \mathcal P_k(e) \quad \forall e\in \mathcal E_h\},
\end{equation}
and finally group all conditions in \eqref{eq:255} as
\begin{equation}\label{eq:28}
\langle\mathbf q_h\cdot\mathbf n,\mu\rangle_{\partial\mathcal T_h\setminus\Gamma} =0 \qquad \forall \mu\in M_h.
\end{equation}
This condition is then not only necessary but sufficient, that is, given $\mathbf q_h\in \mathbf V_h$, condition \eqref{eq:28} is equivalent to the property $\mathbf q_h \in \mathbf V_h^{\mathrm{div}}$. It is to be noticed that condition \eqref{eq:28} does not use the entire space $M_h$ but only the subspace
\[
M_h^\circ:=\{\mu \in M_h\,:\, \mu|_\Gamma =0\}.
\]
The remaining part is the space 
\[
M_h^\Gamma := \{\mu \in M_h\,:\,\mu|_e=0\quad \forall e\in \mathcal E_h^\circ\}\equiv \prod_{e\in \mathcal E_h\, e\subset\Gamma} \mathcal P_k(e),
\]
where $\mathcal E_h^\circ$ is the set of interior faces.

\paragraph{Reaching the formulation.} The side condition \eqref{eq:26} will be compensated with the inclusion of a Lagrange multiplier, which will end up being an approximation of $u$ on the skeleton of the triangulation (on the union of all faces of all the elements). Equation \eqref{eq:24c} is naturally local, since the space $W_h$ is a product space of polynomial spaces on the elements. Instead of using \eqref{eq:24b}, we will consider a similar equation based on each element. Note that we will not do any passage through the reference element in the remainder of this section, which will allow us to use the hat symbol to refer to a particular unknown of the discrete system. We then look for
\begin{subequations}\label{eq:25}
\begin{equation}
(\mathbf q_h,u_h,\widehat u_h)\in \mathbf V_h \times W_h \times M_h
\end{equation}
satisfying for all $K\in \mathcal T_h$
\begin{alignat}{6}
\label{eq:25b}
&(\kappa^{-1}\mathbf q_h,\mathbf r)_K -(u_h,\mathrm{div}\,\mathbf r)_K -\langle \widehat u_h,\mathbf r\cdot\mathbf n\rangle_{\partial K}&&=0 &\qquad &\forall \mathbf r \in \mathcal{RT}_k(K),\\
\label{eq:25c}
&(\mathrm{div}\,\mathbf q_h,w)_K &&=(f,w)_K && \forall w\in \mathcal P_k(K),
\end{alignat}
as well as
\begin{alignat}{6}
\langle \mathbf q_h\cdot\mathbf n,\mu\rangle_{\partial\mathcal T_h\setminus\Gamma} &=0 &\qquad& \forall \mu \in M_h^\circ,\\
\langle \widehat u_h,\mu\rangle_\Gamma &=\langle u_0,\mu\rangle_\Gamma &&\forall \mu \in M_h^\Gamma.
\end{alignat}
\end{subequations}
These equations can be written in global form using the following notation
\[
(u,v)_{\mathcal T_h}=\sum_{K\in \mathcal T_h}(u,v)_K, \qquad \langle \mathbf q\cdot\mathbf n,\mu\rangle_{\partial\mathcal T_h}=\sum_{K\in \mathcal T_h}\langle \mathbf q\cdot\mathbf n,\mu\rangle_{\partial K}
\]
(compare with \eqref{eq:26}) and adding the contributions of all the elements in the local equations \eqref{eq:25b} and \eqref{eq:25c}. The $\mathcal T_h$-subscripted bracket will emphasize the fact that differential operators are applied element by element.

\begin{framed}
\noindent
We look for
\begin{subequations}\label{eq:RTeq}
\begin{equation}
(\mathbf q_h,u_h,\widehat u_h)\in \mathbf V_h \times W_h \times M_h
\end{equation}
satisfying
\begin{alignat}{6}
\label{eq:RTeqb}
&(\kappa^{-1}\mathbf q_h,\mathbf r)_{\mathcal T_h}-(u_h,\mathrm{div}\,\mathbf r)_{\mathcal T_h}+\langle \widehat u_h,\mathbf r\cdot\mathbf n\rangle_{\partial\mathcal T_h} &&=0 &\qquad &\forall\mathbf r \in \mathbf V_h,\\
\label{eq:RTeqc}
&(\mathrm{div}\,\mathbf q_h,w)_{\mathcal T_h}& &=(f,w)_{\mathcal T_h} & &\forall w\in W_h,\\
\label{eq:RTeqd}
&\langle\mathbf q_h\cdot\mathbf n,\mu\rangle_{\partial\mathcal T_h\setminus\Gamma} &&=0 & &\forall\mu \in M_h^\circ,\\
\label{eq:RTeqe}
&\langle \widehat u_h,\mu\rangle_\Gamma &&=\langle u_0,\mu\rangle_\Gamma & &\forall \mu \in M_h^\Gamma.
\end{alignat}
\end{subequations}
\end{framed}

\begin{proposition}[Unique solvability]\label{prop:3.1}\
\begin{itemize}
\item[{\rm (a)}] Equations \eqref{eq:RTeq} are uniquely solvable.
\item[{\rm (b)}] The solution of \eqref{eq:RTeq} solves \eqref{eq:24}.
\item[{\rm (c)}] A solution of \eqref{eq:24} can be added a field $\widehat u_h\in M_h$ to be a solution of \eqref{eq:RTeq}. 
\end{itemize}
\end{proposition}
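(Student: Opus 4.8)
The plan is to treat \eqref{eq:RTeq} and \eqref{eq:24} as genuinely equivalent and then read off unique solvability from the equivalence together with a short energy argument. I would start with (b), the easiest implication. Given a solution $(\mathbf q_h,u_h,\widehat u_h)$ of \eqref{eq:RTeq}, equation \eqref{eq:RTeqd} is exactly the matching condition \eqref{eq:28}, so Proposition~\ref{prop:2.3}(a) and the characterization in \eqref{eq:255} give $\mathbf q_h\in\mathbf V_h^{\mathrm{div}}$, while \eqref{eq:RTeqc} is already \eqref{eq:24c}. To recover \eqref{eq:24b} I would test \eqref{eq:RTeqb} only against $\mathbf r\in\mathbf V_h^{\mathrm{div}}$: since $\widehat u_h$ is single-valued on each face and the two normal traces of such an $\mathbf r$ cancel on every interior face, the term $\langle\widehat u_h,\mathbf r\cdot\mathbf n\rangle_{\partial\mathcal T_h}$ collapses to its boundary part $\langle\widehat u_h,\mathbf r\cdot\mathbf n\rangle_\Gamma$, which equals $\langle u_0,\mathbf r\cdot\mathbf n\rangle_\Gamma$ by \eqref{eq:RTeqe} because $\mathbf r\cdot\mathbf n|_\Gamma\in M_h^\Gamma$. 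As $\mathbf r\in\mathbf H(\mathrm{div},\Omega)$, the broken integrals become integrals over $\Omega$ and \eqref{eq:RTeqb} turns into precisely \eqref{eq:24b}.

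For (c) I would construct $\widehat u_h$ element by element. Writing \eqref{eq:RTeqb} for $\mathbf r$ supported on a single $K$, the requirement is $\langle\widehat u_h,\mathbf r\cdot\mathbf n\rangle_{\partial K}=-(\kappa^{-1}\mathbf q_h,\mathbf r)_K+(u_h,\mathrm{div}\,\mathbf r)_K$ for all $\mathbf r\in\mathcal{RT}_k(K)$. The crucial observation is that the right-hand side depends only on $\mathbf r\cdot\mathbf n$: if $\mathbf r\cdot\mathbf n=0$ on $\partial K$, then its extension by zero lies in $\mathbf V_h^{\mathrm{div}}$ (the normal trace vanishes from both sides of every face), so \eqref{eq:24b} applied to that extension makes the functional vanish. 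Since the normal-trace map $\mathcal{RT}_k(K)\to\mathcal R_k(\partial K)$ is onto (Proposition~\ref{prop:2.5}), the functional factors through $\mathbf r\cdot\mathbf n$ and is represented by a unique $\widehat u_h|_{\partial K}\in\mathcal R_k(\partial K)$. To see that these face-wise values glue into a single-valued $\widehat u_h\in M_h$, I would, on an interior face $e=\overline{K_1}\cap\overline{K_2}$, lift a given $\mu\in\mathcal P_k(e)$ to $\mathbf r_i\in\mathcal{RT}_k(K_i)$ with normal trace $\mu$ on $e$ and zero on the other faces, glue $\mathbf r_1$ and $-\mathbf r_2$ into an element of $\mathbf V_h^{\mathrm{div}}$, and apply \eqref{eq:24b}; this forces the two candidate values on $e$ to agree. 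The same one-element test on a boundary face shows that the constructed $\widehat u_h$ satisfies \eqref{eq:RTeqe} automatically, while \eqref{eq:RTeqc}–\eqref{eq:RTeqd} hold because $(\mathbf q_h,u_h)$ already solves \eqref{eq:24}.

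Finally, for (a) I note that \eqref{eq:RTeq} is a square system (the test space $M_h^\circ\times M_h^\Gamma$ for \eqref{eq:RTeqd}–\eqref{eq:RTeqe} reassembles $M_h$), so it suffices to show that homogeneous data force the trivial solution. Combining \eqref{eq:RTeqb} tested with $\mathbf r=\mathbf q_h$ and \eqref{eq:RTeqc} tested with $w=u_h$, the volume terms cancel; the interface term dies by \eqref{eq:RTeqd} applied to the interior part $\widehat u_h^\circ\in M_h^\circ$, and the boundary term dies because \eqref{eq:RTeqe} with $\mu=\widehat u_h^\Gamma$ already yields $\widehat u_h^\Gamma=0$, leaving $(\kappa^{-1}\mathbf q_h,\mathbf q_h)_{\mathcal T_h}=0$, hence $\mathbf q_h=0$. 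With $\mathbf q_h=0$ the local equation reads $(u_h,\mathrm{div}\,\mathbf r)_K=\langle\widehat u_h,\mathbf r\cdot\mathbf n\rangle_{\partial K}$, and testing against normal-trace-free $\mathbf r$ shows $u_h$ is $L^2(K)$-orthogonal to $\mathrm{div}\{\mathbf r\in\mathcal{RT}_k(K):\mathbf r\cdot\mathbf n=0\}$.

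The main obstacle is exactly that this image is the full space of mean-zero polynomials in $\mathcal P_k(K)$, the local exactness of the RT complex. I would establish it by solving the compatible local Neumann problem $\Delta\phi=v$, $\partial_n\phi=0$ and projecting $\nabla\phi$ with $\boldsymbol\Pi^{\mathrm{RT}}$: the commuting property \eqref{eq:commRT} fixes the divergence while the vanishing face moments force the normal trace to be zero. Granting this, $u_h$ is constant on each $K$; the normal-trace surjectivity then gives $\widehat u_h=u_h$ on each $\partial K$, single-valuedness across interior faces makes this constant global, and $\widehat u_h^\Gamma=0$ forces it to vanish, so $u_h=0$ and $\widehat u_h=0$. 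Alternatively, one can reach $u_h=0$ less locally by using (b) to pass to \eqref{eq:24} and invoking the surjectivity of the discrete divergence $\mathrm{div}\,\mathbf V_h^{\mathrm{div}}=W_h$ (again obtained from $\boldsymbol\Pi^{\mathrm{RT}}$ via \eqref{eq:commRT}), which is where Fortin's inversion enters.
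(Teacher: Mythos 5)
Your proof is correct, and part (b) coincides with the paper's argument (test \eqref{eq:RTeqb} with $\mathbf r\in\mathbf V_h^{\mathrm{div}}$, collapse the skeleton term using single-valuedness, invoke \eqref{eq:RTeqe}), but parts (a) and (c) take genuinely different routes. In (a), after the common energy step giving $\mathbf q_h=\mathbf 0$, the paper stays purely finite-dimensional: it constructs, on each element, a field $\mathbf p\in\mathcal{RT}_k(K)$ with vanishing moments against $\boldsymbol{\mathcal P}_{k-1}(K)$ and normal trace $u_h-\widehat u_h$ (the same unisolvent d.o.f.\ system as \eqref{eq:RT}), and plugging it into the integrated-by-parts local equation \eqref{eq:32} gives $\|u_h-\widehat u_h\|_{\partial K}=0$ in one stroke, after which $\mathbf r=\nabla u_h$ yields local constancy. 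You instead first prove $u_h$ is piecewise constant from the exactness statement $\mathrm{div}\,\{\mathbf r\in\mathcal{RT}_k(K):\mathbf r\cdot\mathbf n=0\}=\{v\in\mathcal P_k(K):(v,1)_K=0\}$, established by solving the Neumann problem $\Delta\phi=v$, $\partial_n\phi=0$ and projecting $\nabla\phi$ (the commutativity \eqref{eq:commRT} fixes the divergence, the face d.o.f.\ kill the normal trace), and only then recover $\widehat u_h=u_h$ on $\partial K$ from surjectivity of the normal trace (Proposition \ref{prop:2.5}). This is sound -- the simplex is convex, so $\phi\in H^2(K)$ and the projection is defined by \eqref{eq:19} -- but it invokes elliptic regularity where the paper needs only unisolvency of the RT degrees of freedom; what it buys is a reusable local Fortin/exactness lemma, which is indeed the structural fact lurking behind the paper's trick. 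In (c), the paper poses the single global square system \eqref{eq:E2} and obtains existence by a Fredholm-type argument (the right-hand side is orthogonal to the kernel of the transposed system); you build $\widehat u_h|_{\partial K}$ element by element, by Riesz representation of the functional $\mathbf r\mapsto-(\kappa^{-1}\mathbf q_h,\mathbf r)_K+(u_h,\mathrm{div}\,\mathbf r)_K$, which descends to normal traces precisely because \eqref{eq:24b} applied to zero-extensions kills it on trace-free fields, and then you verify single-valuedness across interior faces and the boundary equation \eqref{eq:RTeqe} by testing \eqref{eq:24b} with glued local liftings. Your version is more constructive and makes the locality explicit; the paper's is shorter once the linear-algebra solvability lemma is granted. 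Both routes are complete.
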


\begin{proof}
Since $M_h\equiv M_h^\circ\oplus M_h^\Gamma$, existence of solution of \eqref{eq:RTeq} follows from uniqueness. Let then $(\mathbf q_h,u_h,\widehat u_h)\in \mathbf V_h\times W_h\times M_h$ be a solution of 
\begin{subequations}
\begin{alignat}{6}
\label{eq:31a}
&(\kappa^{-1}\mathbf q_h,\mathbf r)_{\mathcal T_h}-(u_h,\mathrm{div}\,\mathbf r)_{\mathcal T_h}+\langle \widehat u_h,\mathbf r\cdot\mathbf n\rangle_{\partial\mathcal T_h} &&=0 &\qquad &\forall\mathbf r \in \mathbf V_h,\\
\label{eq:31b}
&(\mathrm{div}\,\mathbf q_h,w)_{\mathcal T_h}& &=0 & &\forall w\in W_h,\\
\label{eq:31c}
&\langle\mathbf q_h\cdot\mathbf n,\mu\rangle_{\partial\mathcal T_h\setminus\Gamma} &&=0 & &\forall\mu \in M_h^\circ,\\
\label{eq:31d}
&\langle \widehat u_h,\mu\rangle_\Gamma &&=0 & &\forall \mu \in M_h^\Gamma.
\end{alignat}
\end{subequations}
Testing these equations with $(\mathbf q_h,u_h,-\widehat u_h,-\mathbf q_h\cdot\mathbf n)$ and adding the results, we show that $(\kappa^{-1}\mathbf q_h,\mathbf q_h)_{\mathcal T_h}=0$ and hence $\mathbf q_h=\mathbf 0$. Let us now go back to \eqref{eq:31a}, which after integration by parts and localization on a single element yields for all $K\in \mathcal T_h$
\begin{equation}\label{eq:32}
(\nabla u_h,\mathbf r)_K+\langle u_h-\widehat u_h,\mathbf r\cdot\mathbf n\rangle_{\partial K}=0\qquad \forall \mathbf r\in \mathcal {RT}_k(K).
\end{equation}
We now construct $\mathbf p\in \mathcal{RT}_k(K)$ satisfying
\begin{subequations}
\begin{alignat}{4}
(\mathbf p,\mathbf r)_K&=0 &\qquad &\forall \mathbf r\in \boldsymbol{\mathcal P}_{k-1}(K),\\
\label{eq:33b}
\langle \mathbf p\cdot\mathbf n,\mu\rangle_{\partial K}&=\langle u_h-\widehat u_h,\mu\rangle_{\partial K} &&\forall \mu \in \mathcal R_k(\partial K).
\end{alignat}
\end{subequations}
(Note that these are the same equations that define the RT projection \eqref{eq:RT} and the local RT lifting of Section \ref{sec:2.3}.) Using this function as the test function in \eqref{eq:32}, we prove that
\[
0 =(\nabla u_h,\mathbf p)_K +\langle u_h-\widehat u_h,\mathbf p\cdot\mathbf n\rangle_{\partial K}=\langle u_h-\widehat u_h,u_h-\widehat u_h\rangle_{\partial K},
\]
where we have used $\mu=u_h-\widehat u_h\in \mathcal R_k(\partial K)$ in \eqref{eq:33b}. Hence $u_h-\widehat u_h=0$ on $\partial K$ and \eqref{eq:32} shows then (take $\mathbf r=\nabla u_h$) that $u_h\equiv c_K$ in $K$ and $\widehat u_h=u_h\equiv c_K$ on $\partial K$. Since each interior face value of $\widehat u_h$ is reached from different elements, it is easy to see that we have proved that $u_h\equiv c$ and $\widehat u_h\equiv c$. However, equation \eqref{eq:31d} implies that $\widehat u_h=0$ on $\Gamma$, and the proof of uniqueness of solution of \eqref{eq:RTeq} is thus finished.

To prove (b), note first that \eqref{eq:RTeqd} implies that $\mathbf q_h\in \mathbf V_h^{\mathrm{div}}$. On the other hand, if we test equation \eqref{eq:RTeqb} with $\mathbf r \in \mathbf V_h^{\mathrm{div}}\subset \mathbf V_h$, it follows that
\begin{alignat*}{4}
0 = & (\kappa^{-1}\mathbf q_h,\mathbf r)_{\mathcal T_h}-(u_h,\mathrm{div}\,\mathbf r)_{\mathcal T_h} +\langle \widehat u_h,\mathbf r\cdot\mathbf n\rangle_{\partial \mathcal T_h} \\
&(\kappa^{-1}\mathbf q_h,\mathbf r)_\Omega-(u_h,\mathrm{div}\,\mathbf r)_\Omega +\langle \widehat u_h,\mathbf r\cdot\mathbf n\rangle_\Gamma &\qquad &\mbox{($\mathbf r\in \mathbf V_h^{\mathrm{div}}$)}\\
&(\kappa^{-1}\mathbf q_h,\mathbf r)_\Omega-(u_h,\mathrm{div}\,\mathbf r)_\Omega +\langle u_0,\mathbf r\cdot\mathbf n\rangle_\Gamma. &&\mbox{(by \eqref{eq:RTeqe})}
\end{alignat*}
This easily shows that any solution of \eqref{eq:RTeq} solves the traditional RT equations \eqref{eq:24}.

Let now $(\mathbf q_h,u_h)$ solve \eqref{eq:24}. It is clear that equations \eqref{eq:RTeqc} and \eqref{eq:RTeqd} are satisfied. We now look for $\widehat u_h \in M_h$ such that
\begin{equation}\label{eq:E2}
\langle \widehat u_h,\mathbf r\cdot\mathbf n\rangle_{\partial \mathcal T_h}=-
 (\kappa^{-1}\mathbf q_h,\mathbf r)_{\partial\mathcal T_h}+(u_h,\mathrm{div}\,\mathbf r)_{\mathcal T_h}\qquad \forall \mathbf r\in \mathbf V_h.
\end{equation}
The argument to show that \eqref{eq:E2} has a unique solution is simple. Uniqueness follows from the fact that if $\mu \in M_h$, there exists $\mathbf r\in \mathbf V_h$ such that $\langle\mu,\mathbf r\cdot\mathbf n\rangle_{\partial\mathcal T_h}=\langle\mu,\mu\rangle_{\partial\mathcal T_h}$ (this is done by using local liftings of the normal trace). To prove existence of solution, note that if $\mathbf r\in \mathbf V_h$ is such that $\langle\mu,\mathbf r\cdot\mathbf n\rangle_{\partial \mathcal T_h}=0$ for all $\mu \in M_h$, then $\mathbf r\in \mathbf V_h^{\mathrm{div}}$ and $\mathbf r\cdot\mathbf n=0$ on $\Gamma$, but in that case, by \eqref{eq:24b}, the right hand side of \eqref{eq:E2} vanishes. This means that the right-hand side is orthogonal to the kernel of the transpose system. Then, by construction, \eqref{eq:RTeqb} is satisfied. Finally, if $\mu \in M_h^\Gamma$, we can easily find $\mathbf r\in \mathbf V_h^{\mathrm{div}}$ such that $\mathbf r\cdot\mathbf n=\mu$ on $\Gamma$. Then
\begin{alignat*}{4}
\langle \widehat u_h,\mu\rangle_\Gamma &=\langle\widehat u_h,\mathbf r\cdot\mathbf n\rangle_\Gamma &\qquad &\mbox{(by construction)}\\
&=\langle\widehat u_h,\mathbf r\cdot\mathbf n\rangle_{\partial\mathcal T_h} && (\mathbf r\in \mathbf V_h^{\mathrm{div}})\\
&=-(\kappa^{-1}\mathbf q_h,\mathbf r)_{\mathcal T_h}+(u_h,\mathrm{div}\,\mathbf r)_{\mathcal T_h} &&\mbox{(by \eqref{eq:E2})}\\
&=\langle u_0,\mathbf r\cdot\mathbf n\rangle_\Gamma, && \mbox{(by \eqref{eq:24b}, since $\mathbf r\in \mathbf V_h^{\mathrm{div}}$)}
\end{alignat*}
which is the missing equation in the decoupled formulation \eqref{eq:RTeq}.
\end{proof}

\subsection{Energy estimates}\label{sec:3.2}

\paragraph{The error equations.}
Let us first recall the RT equations
\begin{subequations}\label{eq:34}
\begin{alignat}{6}
\label{eq:34a}
&(\kappa^{-1}\mathbf q_h,\mathbf r)_{\mathcal T_h}-(u_h,\mathrm{div}\,\mathbf r)_{\mathcal T_h}+\langle \widehat u_h,\mathbf r\cdot\mathbf n\rangle_{\partial\mathcal T_h} &&=0 &\qquad &\forall\mathbf r \in \mathbf V_h,\\
\label{eq:34b}
&(\mathrm{div}\,\mathbf q_h,w)_{\mathcal T_h}& &=(f,w)_{\mathcal T_h} & &\forall w\in W_h,\\
&\langle\mathbf q_h\cdot\mathbf n,\mu\rangle_{\partial\mathcal T_h\setminus\Gamma} &&=0 & &\forall\mu \in M_h^\circ,\\
&\langle \widehat u_h,\mu\rangle_\Gamma &&=\langle u_0,\mu\rangle_\Gamma & &\forall \mu \in M_h^\Gamma,
\end{alignat}
\end{subequations}
and let us note that these equations correspond to a consistent method:
\begin{subequations}\label{eq:35}
\begin{alignat}{6}
&(\kappa^{-1}\mathbf q,\mathbf r)_{\mathcal T_h}-(u,\mathrm{div}\,\mathbf r)_{\mathcal T_h}+\langle  u,\mathbf r\cdot\mathbf n\rangle_{\partial\mathcal T_h} &&=0 &\qquad &\forall\mathbf r \in \mathbf V_h,\\
&(\mathrm{div}\,\mathbf q,w)_{\mathcal T_h}& &=(f,w)_{\mathcal T_h} & &\forall w\in W_h,\\
&\langle\mathbf q\cdot\mathbf n,\mu\rangle_{\partial\mathcal T_h\setminus\Gamma} &&=0 & &\forall\mu \in M_h^\circ,\\
&\langle  u,\mu\rangle_\Gamma &&=\langle u_0,\mu\rangle_\Gamma & &\forall \mu \in M_h^\Gamma.
\end{alignat}
\end{subequations}
We then consider the projections $(\boldsymbol\Pi\mathbf q,\Pi u,\mathrm P u)\in \mathbf V_h\times W_h\times M_h$ 
defined by $\boldsymbol\Pi \mathbf q|_K:=\boldsymbol\Pi^{\mathrm{RT}}\mathbf q$, $\Pi u|_K:=\Pi_k u$ and 
\[
\langle \mathrm Pu,\mu\rangle_e=\langle u,\mu\rangle_e\qquad \forall \mu \in \mathcal P_k(e)\quad \forall e \in \mathcal E_h.
\]
Next, we substitute these projections in as many instances of \eqref{eq:35} as possible:
\begin{subequations}\label{eq:36}
\begin{alignat}{6}
\label{eq:36a}
&(\kappa^{-1}\mathbf q,\mathbf r)_{\mathcal T_h}-(\Pi  u,\mathrm{div}\,\mathbf r)_{\mathcal T_h}+\langle \mathrm P  u,\mathbf r\cdot\mathbf n\rangle_{\partial\mathcal T_h} &&=0 &\qquad &\forall\mathbf r \in \mathbf V_h,\\
\label{eq:36b}
&(\mathrm{div}\,\boldsymbol\Pi\mathbf q,w)_{\mathcal T_h}& &=(f,w)_{\mathcal T_h} & &\forall w\in W_h,\\
&\langle\boldsymbol\Pi \mathbf q\cdot\mathbf n,\mu\rangle_{\partial\mathcal T_h\setminus\Gamma} &&=0 & &\forall\mu \in M_h^\circ,\\
&\langle  \mathrm P u,\mu\rangle_\Gamma &&=\langle u_0,\mu\rangle_\Gamma & &\forall \mu \in M_h^\Gamma.
\end{alignat}
\end{subequations}
(Note that we have used the commutativity property \eqref{eq:18} of the RT projection.)
We then think in terms of the following quantities:
\begin{equation}\label{eq:37}
\boldsymbol\varepsilon_h^q:=\boldsymbol\Pi \mathbf q-\mathbf q_h\in \mathbf V_h, \qquad \varepsilon_h^u:=\Pi u-u_h\in W_h,\qquad \widehat\varepsilon_h^u:=\mathrm Pu-\widehat u_h\in M_h.
\end{equation}
Subtracting the discrete equations \eqref{eq:34} from \eqref{eq:36}, we get to the {\bf error equations}
\begin{subequations}\label{eq:38}
\begin{alignat}{6}
\label{eq:38a}
&(\kappa^{-1}\boldsymbol\varepsilon_h^q,\mathbf r)_{\mathcal T_h}-(\varepsilon_h^u,\mathrm{div}\,\mathbf r)_{\mathcal T_h}+\langle \widehat\varepsilon_h^u,\mathbf r\cdot\mathbf n\rangle_{\partial\mathcal T_h} &&=(\kappa^{-1}(\boldsymbol\Pi\mathbf q-\mathbf q),\mathbf r)_{\mathcal T_h} &\qquad &\forall\mathbf r \in \mathbf V_h,\\
\label{eq:38b}
&(\mathrm{div}\,\boldsymbol\varepsilon_h^q,w)_{\mathcal T_h}& &=0 & &\forall w\in W_h,\\
&\langle\boldsymbol\varepsilon_h^q\cdot\mathbf n,\mu\rangle_{\partial\mathcal T_h\setminus\Gamma} &&=0 & &\forall\mu \in M_h^\circ,\\
\label{eq:38d}
&\langle  \widehat\varepsilon_h^u,\mu\rangle_\Gamma &&=0 & &\forall \mu \in M_h^\Gamma.
\end{alignat}
\end{subequations}
Testing now equations \eqref{eq:38} with $(\boldsymbol\varepsilon_h^q,\varepsilon_h^u,-\widehat\varepsilon_h^u,-\boldsymbol\varepsilon_h^q\cdot\mathbf n)$, and adding the results, we obtain the {\bf energy identity}
\begin{equation}\label{eq:39}
(\kappa^{-1} \boldsymbol\varepsilon_h^q,\boldsymbol\varepsilon_h^q)_{\mathcal T_h}=(\kappa^{-1}(\boldsymbol\Pi\mathbf q-\mathbf q),\boldsymbol\varepsilon_h^q)_{\mathcal T_h}.
\end{equation}
The Cauchy-Schwarz inequality w.r.t. the following norm
\[
\| \mathbf q\|_{\kappa^{-1}}=\|\kappa^{-1/2} \mathbf q\|_\Omega=(\kappa^{-1}\mathbf q,\mathbf q)_\Omega^{1/2},
\]
provides our first convergence estimate
\begin{framed}
\begin{equation}\label{eq:40}
\|\boldsymbol\Pi\mathbf q-\mathbf q_h\|_{\kappa^{-1}}=\|\boldsymbol\varepsilon_h^q\|_{\kappa^{-1}}\le \| \boldsymbol\Pi \mathbf q-\mathbf q\|_{\kappa^{-1}}.
\end{equation}
\end{framed}

\paragraph{On the decoupling in energy estimates.} The estimate \eqref{eq:40} is decoupled: convergence properties for $\mathbf q$ depend on approximation properties provided by the space $\mathbf V_h$, but not on those of the space $W_h$. This is not the case for equations of the form
\[
\mathrm{div}\,\mathbf q+c\, u=f,\qquad \mbox{where}\, c\ge 0.
\]
We will study this effect in Section \ref{sec:4.5}.

\paragraph{A flux estimate.} Let $\mathbf p\in \boldsymbol{\mathcal P}_{k+1}(K)$. Then
\begin{alignat*}{4}
h_K \langle \mathbf p\cdot\mathbf n,\mathbf p\cdot\mathbf n\rangle_{\partial K}&=h_K \langle \widecheck{\mathbf p\cdot\mathbf n},\widehat{\mathbf p\cdot\mathbf n}\rangle_{\partial\widehat K} &\qquad &\mbox{(by \eqref{eq:0c})} \\
&=h_K \langle \widehat{\mathbf p\cdot\mathbf n}, |a_K|^{-1} \widehat{\mathbf p\cdot\mathbf n}\rangle_{\partial\widehat K}\\
&=h_K \langle \widehat{\mathbf p}\cdot\widehat{\mathbf n},|a_K|^{-1},\widehat{\mathbf p}\cdot\widehat{\mathbf n}\rangle_{\partial\widehat K} &&\mbox{(by \eqref{eq:1c})}\\
&\lesssim h_K^{2-d} \|\widehat{\mathbf p}\cdot\widehat{\mathbf n}\|_{\partial\widehat K}^2 &&\mbox{(by \eqref{eq:5})}\\
&\lesssim h_K^{2-d} \| \widehat{\mathbf p}\|_{\widehat K}^2 &&\mbox{(finite dimension)}\\
&\approx \|\mathbf p\|_K^2. &&\mbox{(by \eqref{eq:6})}
\end{alignat*}
If we then use the norm
\[
\|\mu\|_h:=\bigg( \sum_{K\in \mathcal T_h} h_K \| \mu\|_{\partial K}^2\bigg)^{1/2} \approx \bigg( \sum_{e\in \mathcal E_h} h_e \|\mu\|_e^2\bigg)^{1/2},
\]
we have a bound 
\begin{framed}
\begin{equation}\label{eq:HH1}
\|\boldsymbol\varepsilon_h^q\cdot\mathbf n\|_h\lesssim \|\boldsymbol\varepsilon_h^q\|_\Omega.
\end{equation}
\end{framed}

\subsection{More convergence estimates}\label{sec:3.3}

\paragraph{Lifting $\varepsilon_h^u$.} The key to an error analysis of $u_h$ is a lifting to $\varepsilon_h^u$ to be the divergence of a continuous vector field. Consider then an operator $\mathbf L:L^2(\Omega)\to \mathbf H^1(\Omega)$ 
\[
 \mathrm{div}\,\mathbf Lv=v, \qquad \|\mathbf L\,v\|_{1,\Omega}\lesssim  \| v\|_\Omega.
\]
This can be done by solving a Stokes-like problem in $\Omega$ or by using an extension operator and inverting the divergence operator in free space. Let  then $\boldsymbol\xi:=\mathbf L \varepsilon_h^u$, so that
\begin{equation}\label{eq:42}
\mathrm{div}\,\boldsymbol\xi=\varepsilon_h^u, \qquad \|\boldsymbol\xi\|_{1,\Omega}\lesssim \|\varepsilon_h^u\|_\Omega.
\end{equation}
The error analysis is then based on using $\boldsymbol\Pi\boldsymbol\xi$ as test function in \eqref{eq:36a} and \eqref{eq:34a} and subtracting the result:
\begin{equation}\label{eq:43}
(\kappa^{-1}(\mathbf q_h-\mathbf q),\boldsymbol\Pi\boldsymbol\xi)_{\mathcal T_h}-(\varepsilon_h^u,\mathrm{div}\,\boldsymbol\Pi\boldsymbol\xi)_{\mathcal T_h}+\langle \widehat\varepsilon_h^u,(\boldsymbol\Pi\boldsymbol\xi)\cdot\mathbf n\rangle_{\partial\mathcal T_h}=0.
\end{equation}
We then go ahead and study what is in \eqref{eq:43}. First of all
\begin{alignat*}{4}
\langle \widehat\varepsilon_h^u,(\boldsymbol\Pi\boldsymbol\xi)\cdot\mathbf n\rangle_{\partial\mathcal T_h}&=\langle \widehat\varepsilon_h^u,\boldsymbol\xi\cdot\mathbf n\rangle_{\partial\mathcal T_h} &\qquad & \mbox{(definition of the RT projection)}\\
&=\langle \widehat\varepsilon_h^u,\boldsymbol\xi\cdot\mathbf n\rangle_{\partial\mathcal T_h\setminus\Gamma} & &\mbox{($\widehat\varepsilon_h^u=0$ on $\Gamma$ by \eqref{eq:38d})}\\
&=0. & &\mbox{($\boldsymbol\xi\cdot\mathbf n$ changes sign on internal faces)} 
\end{alignat*}
Then
\begin{alignat*}{4}
\| \varepsilon_h^u\|_\Omega^2 &=(\varepsilon_h^u,\mathrm{div}\,\boldsymbol\xi)_{\mathcal T_h} &\qquad &\mbox{(by \eqref{eq:42})}\\
&=(\varepsilon_h^u,\Pi\,\mathrm{div}\,\boldsymbol\xi)_{\mathcal T_h} &\qquad &\mbox{($\varepsilon_h^u\in W_h$)}\\
&=(\varepsilon_h^u,\mathrm{div}\,\boldsymbol\Pi\boldsymbol\xi)_{\mathcal T_h} &&\mbox{(commutativity property \eqref{eq:commRT})}\\
&=(\kappa^{-1}(\mathbf q_h-\mathbf q),\boldsymbol\Pi \boldsymbol\xi)_{\mathcal T_h} & & \mbox{(by the error equation \eqref{eq:43})}\\
& \le \|\mathbf q-\mathbf q_h\|_{\kappa^{-1}} \|\kappa^{-1/2}\|_{L^\infty}\|\boldsymbol\Pi\boldsymbol\xi\|_{\Omega}\\
& \lesssim  \|\kappa^{-1/2}\|_{L^\infty} \|\mathbf q-\mathbf q_h\|_{\kappa^{-1}}\|\boldsymbol\xi\|_{1,\Omega} & &\mbox{(by Proposition \ref{prop:2.4})}\\
&\lesssim \|\mathbf q-\mathbf q_h\|_{\kappa^{-1}}\| \varepsilon_h^u\|_\Omega. & &\mbox{(by \eqref{eq:42})}
\end{alignat*}
This takes us to our second error estimate
\begin{framed}
\begin{equation}\label{eq:44}
\| \Pi u-u_h\|_\Omega=\|\varepsilon_h^u\|_\Omega \lesssim \| \mathbf q-\mathbf q_h\|_{\kappa^{-1}}.
\end{equation}
\end{framed}

\paragraph{Lifting $\widehat\varepsilon_h^u$ locally.} The error analysis for $\widehat u_h$ is carried out on an element-by-element basis using the lifting operator of Proposition \ref{prop:2.5}. Let then $\mathbf r:=\mathbf L^{\mathrm{RT}}\widehat\varepsilon_h^u|_{\partial K}\in \mathcal{RT}_k(K)$, so that
\begin{equation}\label{eq:45}
\mathbf r\cdot\mathbf n=\widehat\varepsilon_h^u, \qquad \|\mathbf r\|_K \lesssim  h_K^{1/2}\|\widehat\varepsilon_h^u\|_{\partial K}.
\end{equation}
Also, using a scaling argument (and the fact that $\mathbf r$ is in a polynomial space), we show that
\begin{equation}\label{eq:46}
h_K |\mathbf r|_{1,K} \lesssim \|\mathbf r\|_K\lesssim  h_K^{1/2}\|\widehat\varepsilon_h^u\|_{\partial K}.
\end{equation}
Then
\begin{alignat*}{4}
\| \widehat\varepsilon_h^u\|_{\partial K}^2 &=\langle \widehat\varepsilon_h^u,\mathbf r\cdot\mathbf n\rangle_{\partial K} &\qquad &\mbox{(by \eqref{eq:45})}\\
&=(\varepsilon_h^u,\mathrm{div}\,\mathbf r)_K-(\kappa^{-1} (\mathbf q-\mathbf q_h),\mathbf r)_K & & \mbox{(by the error eqn \eqref{eq:38a})}\\
& \le \| \varepsilon_h^u\|_K \|\mathrm{div}\,\mathbf r\|_K + \|\kappa^{-1/2} (\mathbf q-\mathbf q_h)\|_K \| \kappa^{-1/2} \mathbf r\|_K\\
&\lesssim  h_K^{-1} \bigg(\|\varepsilon_h^u\|_K + h_K   \|\kappa^{-1/2} (\mathbf q-\mathbf q_h)\|_K \bigg)\, h_K^{1/2}\|\widehat\varepsilon_h^u\|_{\partial K}. &&\mbox{(by \eqref{eq:45} and \eqref{eq:46})}
\end{alignat*}
A complete estimate can now be proved by adding the previous inequalities over all triangles. 
We have then essentially proved that
\begin{framed}
\begin{equation}\label{eq:47}
\|\mathrm P u-\widehat u_h\|_h=
\|\widehat\varepsilon_h^u\|_h \lesssim \|\varepsilon_h^u\|_\Omega + h \|\mathbf q-\mathbf q_h\|_{\kappa^{-1}}.
\end{equation}
\end{framed}

\subsection{Superconvergence estimates by duality}\label{sec:3.4}

\paragraph{Another inverse of the divergence.} A superconvergence analysis for $u_h$ can be carried out by using a more demanding form of writing $\mathrm{div}\,\boldsymbol\xi=\varepsilon_h^u$ than the one used in
\eqref{eq:42}. In particular, we will be using also the second of the error equations \eqref{eq:38} in the arguments that follow. We start by considering a dual problem 
\begin{subequations}\label{eq:48}
\begin{alignat}{4}
\label{eq:48a}
\kappa^{-1} \boldsymbol\xi -\nabla \theta &=0 &\qquad &\mbox{in $\Omega$},\\
\mathrm{div}\,\boldsymbol\xi &=\varepsilon_h^u &&\mbox{in $\Omega$},\\
\label{eq:48c}
\theta &=0 & &\mbox{on $\Gamma$}.
\end{alignat}
\end{subequations}
We assume the following {\bf regularity hypothesis}: there exists $C_{\mathrm{reg}}>0$ such that
\begin{equation}\label{eq:49}
\| \boldsymbol\xi\|_{1,\Omega}+ \|\theta\|_{2,\Omega}\le C_{\mathrm{reg}} \|\varepsilon_h^u\|_\Omega.
\end{equation}
This estimate holds for convex domains with smooth diffusion coefficient $\kappa$.

\paragraph{The duality estimate.} The beginning of the argument can be copied verbatim from what we did in Section \ref{sec:3.3}:
\begin{alignat*}{4}
\|\varepsilon_h^u\|_\Omega^2 &=(\kappa^{-1} (\mathbf q_h-\mathbf q),\boldsymbol\Pi\boldsymbol\xi)_{\mathcal T_h}\\
&=(\kappa^{-1} (\mathbf q_h-\mathbf q),\boldsymbol\Pi\boldsymbol\xi-\boldsymbol\xi)_{\mathcal T_h}+(\mathbf q_h-\mathbf q,\kappa^{-1}\boldsymbol\xi)_{\mathcal T_h}\\
&=(\kappa^{-1} (\mathbf q_h-\mathbf q),\boldsymbol\Pi\boldsymbol\xi-\boldsymbol\xi)_{\mathcal T_h}+(\mathbf q_h-\mathbf q,\nabla \theta)_{\mathcal T_h}. &\qquad &\mbox{(by \eqref{eq:48a})}
\end{alignat*}
The next part of the argument consists of working on the rightmost term in the previous inequality. Then
\begin{alignat*}{4}
(\mathbf q_h-\mathbf q,\nabla \theta)_{\mathcal T_h} &=-(\mathrm{div}\,(\mathbf q_h-\mathbf q),\theta)_{\mathcal T_h}+\langle (\mathbf q_h-\mathbf q)\cdot\mathbf n,\theta\rangle_{\partial\mathcal T_h\setminus\Gamma}&\qquad &\mbox{($\theta=0$ on $\Gamma$)}\\
&=(\mathrm{div}\,(\mathbf q-\mathbf q_h),\theta)_{\mathcal T_h} & &\mbox{(single-valued on $\partial\mathcal T_h$)}\\
&=(f-\Pi f,\theta)_\Omega & & \mbox{($\mathrm{div}\mathbf q_h=\Pi f$ is \eqref{eq:34b})}\\
&=(f-\Pi f,\theta-\Pi \theta)_\Omega.
\end{alignat*}
We end up by putting everything together and using estimates of the projections
\begin{alignat*}{4}
\|\varepsilon_h^u\|_\Omega^2 &
=(\kappa^{-1} (\mathbf q_h-\mathbf q),\boldsymbol\Pi\boldsymbol\xi-\boldsymbol\xi)_{\mathcal T_h}+(\mathbf q_h-\mathbf q,\nabla \theta)_{\mathcal T_h}\\
&=(\kappa^{-1} (\mathbf q_h-\mathbf q),\boldsymbol\Pi\boldsymbol\xi-\boldsymbol\xi)_{\mathcal T_h}+(f-\Pi f,\theta-\Pi \theta)_\Omega\\
&\lesssim h \|\mathbf q_h-\mathbf q\|_{\kappa^{-1}} |\boldsymbol\xi|_{1,\Omega}+ h \| f-\Pi f\|_\Omega |\theta|_{1,\Omega}\\
& \lesssim h\, (  \|\mathbf q_h-\mathbf q\|_{\kappa^{-1}}  + \| f-\Pi f\|_\Omega)\|\varepsilon_h^u\|_\Omega. &&\mbox{(reg. hypothesis \eqref{eq:49})}
\end{alignat*}
(The argument uses that $\|\mathbf p-\boldsymbol\Pi^{\mathrm{RT}}\mathbf p\|_K \lesssim h_K |\mathbf p|_{1,K}$. This can be easily proved using the same arguments as in Proposition \ref{prop:2.4}(b).)
We have thus proved that, under the regularity hypothesis \eqref{eq:49},
\begin{framed}
\begin{equation}\label{eq:50}
\|\varepsilon_h^u\|_\Omega \lesssim h\, \big(  \|\mathbf q_h-\mathbf q\|_{\kappa^{-1}}  + \| f-\Pi f\|_\Omega\big).
\end{equation}
\end{framed}
This bound can then be used in the right-hand side of \eqref{eq:47} to show that
\[
\|\widehat\varepsilon_h^u\|_h \lesssim  h\, \big(  \|\mathbf q_h-\mathbf q\|_{\kappa^{-1}}  + \| f-\Pi f\|_\Omega\big).
\]

\subsection{Summary of estimates}

\paragraph{Approximation properties.} Let start by recalling that
\[
\|\mathbf q-\boldsymbol\Pi\mathbf q\|_\Omega \lesssim h^{k+1} |\mathbf q|_{k+1,\Omega}\qquad\mbox{and}\qquad \| u-\Pi u\|_\Omega \lesssim h^{k+1} |u|_{k+1,\Omega}.
\]
Also
\begin{alignat*}{4}
h_K^{\frac12} \| u-\mathrm P u\|_{\partial K} &\lesssim h_K^{\frac{d}2}\|\widehat u-\widehat{\mathrm Pu}\|_{\partial\widehat K} &\qquad& \mbox{(change of variables \eqref{eq:6})}\\
&= h_K^{\frac{d}2} \|\widehat u-\widehat{\mathrm P}\widehat u\|_{\partial\widehat K} &&\mbox{(easy argument)}\\
& \le h_K^{\frac{d}2} \|\widehat u-\widehat\Pi_k \widehat u\|_{\partial\widehat K} && \mbox{($\widehat{\mathrm P}$ gives the best aprox)}\\
& \lesssim h_K^{\frac{d}2} \|\widehat u-\widehat\Pi_k \widehat u\|_{1,\widehat K} && \mbox{(trace theorem)}\\
& \lesssim h_K^{\frac{d}2} | \widehat u|_{k+1,\widehat K} &&\mbox{(compactness)}\\
& \lesssim h_K^{k+1} |u|_{k+1,K}, &&\mbox{(change of variables \eqref{eq:6})} 
\end{alignat*}
which can be collected in the estimate
\[
\| u-\mathrm P u\|_h \lesssim h^{k+1} | u|_{k+1,\Omega}.
\]
It is also easy to see that
\[
\|\mathbf q\cdot\mathbf n-\boldsymbol\Pi\mathbf q\cdot\mathbf n\|_h\lesssim h^{k+1}|\mathbf q|_{k+1,\Omega}.
\]
This is done element-by-element, face-by-face, using the fact that $\boldsymbol\Pi\mathbf q\cdot\mathbf n|_{\partial K}$ is the best approximation of $\mathbf q\cdot\mathbf n$ on $\mathcal R_k(\partial K)$ and, therefore, we can use the previous estimate applied to $u=\mathbf q\cdot\mathbf n_e$ for every $e\in \mathcal E(K)$.

\paragraph{Optimal convergence.} Assuming that everything is going the best way it can (solutions are smooth, the regularity hypotheses holds), we can summarize the convergence results in the following table:
\begin{framed}
\begin{alignat*}{7}
\|\mathbf q-\mathbf q_h\|_\Omega &\lesssim h^{k+1} & \qquad  \|\boldsymbol\Pi\mathbf q-\mathbf q_h\|_\Omega&\lesssim h^{k+1} &\qquad && \mbox{(see \eqref{eq:40})}\\
\| u-u_h\|_\Omega &\lesssim h^{k+1} &\| \Pi u-u_h\|_\Omega &\lesssim h^{k+2} &\qquad &&\mbox{(see \eqref{eq:50})}\\
\| u-\widehat u_h\|_h &\lesssim h^{k+1} & \|\mathrm P u-\widehat u_h\|_h &\lesssim h^{k+2} &&& \mbox{(see \eqref{eq:47} and \eqref{eq:50})}\\
\|\mathbf q\cdot\mathbf n-\mathbf q_h\cdot\mathbf n\|_h &\lesssim h^{k+1} \qquad &\|\boldsymbol\Pi\mathbf q\cdot\mathbf n-\mathbf q_h\cdot\mathbf n\|_h &\lesssim h^{k+1} &&&\mbox{(see \eqref{eq:HH1} and \eqref{eq:40})}
\end{alignat*}
\end{framed}

\section{Additional topics}\label{sec:4}

The following section explains some topics that are related to the RT method, or more especifically to the Arnold-Brezzi formulation of the RT method. These are general ideas that will apply with minimal changes to the other two methods (BDM and HDG) that we will introduce in these notes. For reasons of notation, we will write the diffusion problem as
\begin{subequations}\label{eq:A0}
\begin{alignat}{4}
\kappa^{-1}\mathbf q+\nabla u &=0 &\quad &\mbox{in $\Omega$},\\
\mathrm{div}\,\mathbf q &=f&\quad &\mbox{in $\Omega$},\\
u&=g &&\mbox{on $\Gamma$.}
\end{alignat}
\end{subequations}
The term hybridization makes reference to the not that popular hybrid methods, where the variational formulation is taken directly on the interfaces of the elements. (Yes, some of them can be understood as domain decomposition methods, and yes, the ultra-weak variational formulation UWVF is also related.) For more about hybrid methods --that we will not touch here--, the reader is referred to Brezzi and Fortin's book \cite{BrFo:1991}.

\subsection{Hybridization}

\paragraph{What is hybridization.} The goal of hybridization is the reduction of the system \eqref{eq:RTeq} to a linear system where only $\widehat u_h$ shows up. The remaining two variables will be reconstructed after solving for $\widehat u_h$, in an element-by-element fashion, easy to realize due to the fact that equations \eqref{eq:RTeqb} and \eqref{eq:RTeqc} are local or, in other words, the spaces $\mathbf V_h$ and $W_h$ are completely discontinuous. For some forthcoming arguments, it'll be practical to deal with the space
\[
B_h:=\prod_{K\in \mathcal T_h}\mathcal R_k(\partial K),
\]
and to note that $M_h$ is the subset of $B_h$ of functions that are single valued.

\paragraph{Flux due to sources.} Given $f:\Omega \to\mathbb R$, we look for
\begin{subequations}\label{eq:A1}
\begin{equation}
(\mathbf q_h^f,u_h^f)\in \mathbf V_h\times W_h,
\end{equation}
satisfying
\begin{alignat}{6}
& (\kappa^{-1}\mathbf q_h^f,\mathbf r)_{\mathcal T_h} -(u_h^f,\mathrm{div}\,\mathbf r)_{\mathcal T_h} &&=0 &\qquad &\forall\mathbf r \in \mathbf V_h,\\
& (\mathrm{div}\,\mathbf q_h^f,w)_{\mathcal T_h} &&=(f,w)_{\mathcal T_h} &&\forall w\in W_h.
\end{alignat}
\end{subequations}
(Existence and uniqueness of solution of \eqref{eq:A1} is straightforward to prove.) 
We then define
\begin{equation}\label{eq:A2}
\phi_h^f:=-\mathbf q_h^f\cdot\mathbf n \in B_h.
\end{equation}

\paragraph{Local solvers and flux operators.} Consider now the operator
\begin{subequations}
\begin{equation}\label{eq:A3a}
M_h \ni \widehat u_h \quad\longmapsto\quad (\mathrm L^q(\widehat u_h),\mathrm L^u(\widehat u_h))=(\mathbf q_h,u_h) \in \mathbf V_h\times W_h
\end{equation}
where
\begin{alignat}{6}
\label{eq:A3b}
& (\kappa^{-1}\mathbf q_h,\mathbf r)_{\mathcal T_h} -(u_h,\mathrm{div}\,\mathbf r)_{\mathcal T_h}+\langle\widehat u_h,\mathbf r\cdot\mathbf n\rangle_{\partial T_h} &&=0 &\qquad &\forall\mathbf r \in \mathbf V_h,\\
\label{eq:A3c}
& (\mathrm{div}\,\mathbf q_h,w)_{\mathcal T_h} &&=0 &&\forall w\in W_h.
\end{alignat}
We then consider the flux operator $\phi_h:M_h \to B_h$ given by
\begin{equation}
\phi_h(\widehat u_h):=-\mathbf q_h\cdot\mathbf n.
\end{equation}
\end{subequations}
Note that equations \eqref{eq:A3b}-\eqref{eq:A3c} are uniquely solvable and can be solved element by element.

\begin{framed}
\noindent{\bf The hybridized system.} We look for
\begin{subequations}
\begin{equation}
\widehat u_h \in M_h
\end{equation}
satisfying
\begin{alignat}{6}
\langle \phi_h(\widehat u_h)+\phi_h^f,\mu\rangle_{\partial\mathcal T_h\setminus\Gamma} &=0 &\qquad &\forall\mu \in M_h^\circ,\\
\langle \widehat u_h,\mu\rangle_\Gamma &=\langle g,\mu\rangle_\Gamma &&\forall \mu \in M_h^\Gamma.
\end{alignat}
We then define
\begin{equation}
\mathbf q_h=\mathrm L^q(\widehat u_h)+\mathbf q_h^f, \qquad u_h=\mathrm L^u(\widehat u_h)+u_h^f.
\end{equation}
\end{subequations}
\end{framed}

\noindent
Note that if we subtract 
\[
\widehat u_h^q\in M_h^\Gamma \qquad \mbox{satisfying} \qquad \langle \widehat u_h^g,\mu\rangle_\Gamma =\langle g,\mu\rangle_\Gamma \qquad\forall \mu \in M_h^\Gamma,
\]
then the hybridized system can be written as
\[
\widehat u_h^\circ\in M_h^\circ \qquad \mbox{such that}\qquad \langle \phi_h(\widehat u_h^\circ),\mu\rangle_{\partial\mathcal T_h\setminus\Gamma}=-\langle\phi_h^f+\phi_h(\widehat u_h^g),\mu\rangle_{\partial\mathcal T_h\setminus\Gamma}\qquad\forall\mu\in M_h^\circ.
\]

\paragraph{The hybridized bilinear form.} We next focus on the bilinear form
\begin{equation}\label{eq:A5}
M_h^\circ\times M_h^\circ \ni (\lambda,\mu)\quad \longmapsto\quad \langle\phi_h(\lambda),\mu\rangle_{\partial\mathcal T_h\setminus\Gamma}.
\end{equation}
Let then $(\mathbf q_h,u_h)=(\mathrm L^q(\lambda),\mathrm L^u(\lambda))$ and $(\mathbf v_h,v_h)=(\mathrm L^q(\mu),\mathrm L^u(\mu))$. Note that
\begin{alignat*}{6}
&(\kappa^{-1}\mathbf v_h,\mathbf r)_{\mathcal T_h}-(v_h,\mathrm{div}\,\mathbf r)_{\mathcal T_h}+\langle \mu,\mathbf r\cdot\mathbf n\rangle_{\partial\mathcal T_h} &&=0&\qquad &\forall\mathbf r\in \mathbf V_h,\\
&(\mathrm{div}\,\mathbf q_h,w)_{\mathcal T_h}&&=0 &&\forall w\in W_h,
\end{alignat*}
and therefore
\begin{alignat*}{6}
&(\kappa^{-1}\mathbf v_h,\mathbf q_h)_{\mathcal T_h}-(v_h,\mathrm{div}\,\mathbf q_h)_{\mathcal T_h}&&=-\langle \mu,\mathbf q_h\cdot\mathbf n\rangle_{\partial\mathcal T_h} \\
&(\mathrm{div}\,\mathbf q_h,v_h)_{\mathcal T_h}&&=0,
\end{alignat*}
which implies that
\begin{alignat*}{6}
\langle \phi_h(\lambda),\mu\rangle_{\partial\mathcal T_h\setminus\Gamma} &=\langle \phi_h(\lambda),\mu\rangle_{\partial\mathcal T_h} &\qquad &(\mu \in M_h^\circ),\\
&=-\langle \mathbf q_h\cdot\mathbf n,\mu\rangle_{\partial\mathcal T_h} &&\mbox{(definition of $\phi_h$)}\\
&=(\kappa^{-1}\mathbf v_h,\mathbf q_h)_{\mathcal T_h}.
\end{alignat*}
It is clear from this expression that the bilinear form is {\bf symmetric} and positive semidefinite. On the other hand, if $\lambda\in M_h^\circ$ and $\langle\phi_h(\lambda),\lambda\rangle_{\partial\mathcal T_h\setminus\Gamma}=0$, it is a simple exercise to observe that $(L^q(\lambda),L^u(\lambda),\lambda)$ is a solution of the discrete equations \eqref{eq:RTeq} with zero right-hand side and therefore has to vanish. This is proof of {\bf positive definiteness} of the bilinear form \eqref{eq:A5}.

\subsection{A discrete Dirichlet form}

\paragraph{Towards a primal form.} The goal of this section is the proof that the system \eqref{eq:RTeq} can be written in the variable $u_h$ only. This is not useful from the practical point of view, but helps in arguments related to RT discretization of evolutionary partial differential equations. 

\paragraph{Lifting of Dirichlet conditions.} Given $g:\Gamma\to\mathbb R$, we consider the pair
\begin{subequations}\label{eq:A6}
\begin{equation}
(\mathbf q_h^g,\widehat u_h^g)\in \mathbf V_h\times M_h
\end{equation}
satisfying
\begin{alignat}{6}
&(\kappa^{-1}\mathbf q_h^g,\mathbf r)_{\mathcal T_h}+\langle \widehat u_h^g,\mathbf r\cdot\mathbf n\rangle_{\partial\mathcal T_h} &&=0 &\qquad &\forall\mathbf r\in \mathbf V_h,\\
&\langle\mathbf q_h^g\cdot\mathbf n,\mu\rangle_{\partial\mathcal T_h\setminus\Gamma}&&=0&&\forall \mu\in M_h^\circ,\\
&\langle\widehat u_h^g,\mu\rangle_\Gamma &&=\langle g,\mu\rangle_\Gamma &&\forall \mu \in M_h^\Gamma.
\end{alignat}
(Existence and uniqueness of solutions to this problem is an easy exercise.) We then define
\begin{equation}
W_h\ni w \quad\longmapsto\quad \ell_g(w):=(\mathrm{div}\,\mathbf q_h^g,w)_{\mathcal T_h}.
\end{equation}
\end{subequations} 

\paragraph{The RT gradient.} We now consider the map
\begin{subequations}\label{eq:A7}
\begin{equation}
W_h\ni w_h \quad\longmapsto\quad (\mathrm G^q(u_h),\mathrm G^{\hat u}(u_h))=(\mathbf q_h,\widehat u_h)\in \mathbf V_h\times M_h,
\end{equation}
where
\begin{alignat}{6}
&(\kappa^{-1}\mathbf q_h,\mathbf r)_{\mathcal T_h}-(u_h,\mathrm{div}\,\mathbf r)_{\mathcal T_h}+\langle \widehat u_h,\mathbf r\cdot\mathbf n\rangle_{\partial\mathcal T_h} &&=0 &\qquad &\forall\mathbf r\in \mathbf V_h,\\
&\langle\mathbf q_h\cdot\mathbf n,\mu\rangle_{\partial\mathcal T_h\setminus\Gamma}&&=0&&\forall \mu\in M_h^\circ,\\
&\langle\widehat u_h,\mu\rangle_\Gamma &&=0 &&\forall \mu \in M_h^\Gamma.
\end{alignat}
We can thus think of the bilinear form (the discrete Dirichlet form)
\begin{equation}\label{eq:A7e}
(u_h,v_h)\ni W_h\times W_h \quad\longmapsto\quad D_h(u_h,w_h):=(\mathrm{div}\,\mathbf q_h,v_h)_{\mathcal T_h}=(\mathrm{div}\,\mathrm G^q(u_h),v_h)_{\mathcal T_h}.
\end{equation}
\end{subequations}
Note that $\mathrm G^q(u_h)$ is a minus gradient operator, instead of a gradient operator.

\paragraph{The primal form.} Given $f:\Omega\to\mathbb R$ and $g:\Gamma\to\mathbb R$, we look for
\[
u_h\in W_h \qquad\mbox{satisfying}\qquad D_h(u_h,w)=(f,w)_{\mathcal T_h}-\ell_g(w)\quad\forall w\in W_h.
\]
Then, 
\[
\mathbf q_h=\mathbf q_h^g+\mathrm G^q(u_h), \qquad \widehat u_h=\widehat u_h^g+\mathrm G^{\hat u}(u_h),
\]
and $u_h$ constitute the solution of \eqref{eq:RTeq}. It is not difficult to figure out that the primal form is just the Schur complement form of the traditional RT formulation \eqref{eq:24}.

\paragraph{Properties of the Dirichlet form.} Given $(u_h,v_h)\in W_h\times W_h$, we consider $(\mathbf q_h,\widehat u_h)=(\mathrm G^q(u_h),\mathrm G^{\hat u}(u_h))$ and $(\mathbf v_h,\widehat v_h)=(\mathrm G^q(v_h),\mathrm G^{\hat u}(v_h))$. Note that
\begin{alignat*}{4}
&(\kappa^{-1}\mathbf v_h,\mathbf r)_{\mathcal T_h}-(v_h,\mathrm{div}\,\mathbf r)_{\mathcal T_h}+\langle\widehat v_h,\mathbf r\cdot\mathbf n\rangle_{\partial\mathcal T_h\setminus\Gamma} &&=0&\qquad &\forall\mathbf r\in \mathbf V_h,\\
&\langle \mathbf q_h\cdot\mathbf n,\mu\rangle_{\partial\mathcal T_h\setminus\Gamma}&&=0&&\forall\mu\in M_h^\circ,
\end{alignat*}
and therefore
\begin{alignat*}{4}
&(\kappa^{-1}\mathbf v_h,\mathbf q_h)_{\mathcal T_h}-(v_h,\mathrm{div}\,\mathbf q_h)_{\mathcal T_h}+\langle\widehat v_h,\mathbf q_h\cdot\mathbf n\rangle_{\partial\mathcal T_h\setminus\Gamma} &&=0,&\qquad &\\
&\langle \mathbf q_h\cdot\mathbf n,\widehat v_h\rangle_{\partial\mathcal T_h\setminus\Gamma}&&=0.
\end{alignat*}
Then
\[
D_h(u_h,v_h) =(\mathrm{div}\,\mathbf q_h,v_h)_{\mathcal T_h}
=(\kappa^{-1}\mathbf v_h,\mathbf q_h)_{\mathcal T_h},
\]
which proves that the discrete Dirichlet form is {\bf symmetric} and positive semidefinite. Now, if $D(u_h,u_h)=0$, it is easy to see how $(\mathrm G^q(u_h),u_h,\mathrm G^{\hat u}(u_h))$ is a solution of \eqref{eq:RTeq} with zero right-hand side, and therefore it has to vanish, which proves that the discrete Dirichlet form is {\bf positive definite}.

\subsection{Stenberg postprocessing}

\paragraph{The local postprocessing step.} Assume that we have solved the RT equations \eqref{eq:RTeq}. We look for
\begin{subequations}\label{eq:A8}
\begin{equation}
u_h^\star\in \prod_{K\in \mathcal T_h} \mathcal P_{k+1}(K),
\end{equation}
satisfying for all $K\in\mathcal T_h$
\begin{alignat}{6}
(\kappa \nabla u_h^\star,\nabla v)_K &=(f,  v)_K-\langle\mathbf q_h\cdot\mathbf n,v\rangle_{\partial K} &\qquad & \forall v\in \mathcal P_{k+1}(K),\\
(u_h^\star,1)_K &=(u_h,1)_K. &&
\end{alignat}
\end{subequations}
This postprocessing method was first proposed by Rolf Stenberg in \cite{Stenberg:1991}. Note that a simple computation shows
\begin{alignat}{4}
\nonumber
(\kappa\nabla u_h^\star,\nabla v)_K &=(f,v)_K-\langle\mathbf q_h\cdot\mathbf n,v\rangle_{\partial K}\\
\nonumber
&=(\mathrm{div}\,\mathbf q,v)_K -\langle\mathbf q_h\cdot\mathbf n,v\rangle_{\partial K}\\
\nonumber
&=-(\mathbf q,\nabla v)_K+\langle\mathbf q\cdot\mathbf n-\mathbf q_h\cdot\mathbf n,v\rangle_{\partial K}\\
&=(\kappa \nabla u,\nabla v)_K+\langle\mathbf q\cdot\mathbf n-\mathbf q_h\cdot\mathbf n,v\rangle_{\partial K} \qquad\forall v\in \mathcal P_{k+1}(K).
\label{eq:A9o}
\end{alignat}

\paragraph{Some preliminary comments.} Before we start anayzing this, let us introduce the space
\[
\mathcal P_{k+1}^0(K):=\{ v\in \mathcal P_{k+1}(K)\,:\, (v,1)_K=0\},
\]
and note that
\begin{equation}\label{eq:A9}
(f,1)_K=(\mathrm{div}\,\mathbf q_h,1)_K=\langle\mathbf q_h\cdot\mathbf n,1\rangle_{\partial K},
\end{equation}
which means that we can decompose in an orthogonal sum
\begin{subequations}\label{eq:A10}
\begin{equation}
u_h^\star=c_h+\omega_h, \qquad c_h \in \prod_{K\in \mathcal T_h}\mathcal P_0(K), \qquad \omega_h\in \prod_{K\in \mathcal T_h}\mathcal P_{k+1}^0(K)
\end{equation}
and compute separately for all $K\in \mathcal T_h$
\begin{alignat}{4}
(\kappa \nabla w_h,\nabla v)_K&=(f,v)_K-\langle\mathbf q_h\cdot\mathbf n,v\rangle_{\partial K}&\qquad & \forall v\in \mathcal P_{k+1}^0(K),\\
(c_h,1)_K&=(u_h,1)_K 
\end{alignat}
\end{subequations}
It is clear that due to \eqref{eq:A9}, problems \eqref{eq:A10} and \eqref{eq:A8} are equivalent, while it is quite obvious that problem \eqref{eq:A10} has a unique solution.

\begin{lemma}\label{lemma:4.1}
The following inequalities hold
\[
\| v\|_{\partial K}\lesssim h_K^{1/2} | v|_{1,K}, \qquad \| v\|_{K}\approx h_K | v|_{1,K} \qquad \forall v\in \mathcal P_{k+1}^0(K)
\]
\end{lemma}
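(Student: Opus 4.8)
The plan is to reduce both estimates to the reference element $\widehat K$ by the scaling relations \eqref{eq:6} and \eqref{eq:8}, where each claim collapses to a comparison of norms on a fixed finite-dimensional space. The one genuinely substantive ingredient is the observation that the first-order seminorm $|\cdot|_{1,\widehat K}$ is in fact a \emph{norm} on $\mathcal P_{k+1}^0(\widehat K)$: if $|\widehat v|_{1,\widehat K}=0$ then $\widehat v$ is constant, and the zero-average constraint $(\widehat v,1)_{\widehat K}=0$ then forces $\widehat v=0$. This is exactly what lets a seminorm control a full norm, and it is why the zero-average hypothesis cannot be dropped (constants are a counterexample to the first inequality otherwise).

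First I would verify that $\mathcal P_{k+1}^0(K)$ is preserved by the change of variables $\widehat v=v\circ\mathrm F$. Since the integral rule gives $(v,1)_K=|J|\,(\widehat v,1)_{\widehat K}$, the zero-average condition transfers faithfully, so $v\in\mathcal P_{k+1}^0(K)$ if and only if $\widehat v\in\mathcal P_{k+1}^0(\widehat K)$. On this finite-dimensional space I would then invoke equivalence of norms: because $|\cdot|_{1,\widehat K}$ is a norm (by the observation above), while $\|\cdot\|_{\partial\widehat K}$ is a seminorm and $\|\cdot\|_{\widehat K}$ is a norm, every one of them is comparable to $|\cdot|_{1,\widehat K}$, so that
\[
\|\widehat v\|_{\partial\widehat K}\lesssim |\widehat v|_{1,\widehat K},\qquad \|\widehat v\|_{\widehat K}\approx |\widehat v|_{1,\widehat K}\qquad\forall\,\widehat v\in\mathcal P_{k+1}^0(\widehat K),
\]
with constants depending only on $k$, $d$, and $\widehat K$.

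Finally I would transport these bounds to $K$ using \eqref{eq:6} and \eqref{eq:8}, namely $\|v\|_{\partial K}\approx h_K^{(d-1)/2}\|\widehat v\|_{\partial\widehat K}$, $\|v\|_K\approx h_K^{d/2}\|\widehat v\|_{\widehat K}$, and $|v|_{1,K}\approx h_K^{d/2-1}|\widehat v|_{1,\widehat K}$ (so $|\widehat v|_{1,\widehat K}\approx h_K^{1-d/2}|v|_{1,K}$). Substituting the reference-element inequalities gives
\[
\|v\|_{\partial K}\lesssim h_K^{(d-1)/2}|\widehat v|_{1,\widehat K}\approx h_K^{(d-1)/2}\,h_K^{1-d/2}|v|_{1,K}=h_K^{1/2}|v|_{1,K},
\]
and likewise $\|v\|_K\approx h_K^{d/2}|\widehat v|_{1,\widehat K}\approx h_K^{d/2}\,h_K^{1-d/2}|v|_{1,K}=h_K|v|_{1,K}$, which are precisely the two claimed estimates. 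The main obstacle here is conceptual rather than computational: pinning down that the zero-average constraint is exactly what upgrades the seminorm to a norm on the reference element, and checking that this constraint survives the change of variables. Once that is in place, the tracking of $h_K$-powers is entirely routine.
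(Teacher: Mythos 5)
Your proof is correct and follows essentially the same route as the paper's: reduce to the reference element via the invariance $v\in\mathcal P_{k+1}^0(K)\Leftrightarrow\widehat v\in\mathcal P_{k+1}^0(\widehat K)$, use the finite-dimensional bounds $\|\widehat v\|_{\partial\widehat K}\lesssim\|\widehat v\|_{\widehat K}\approx|\widehat v|_{1,\widehat K}$ there, and scale back with \eqref{eq:6} and \eqref{eq:8}, with identical bookkeeping of the powers of $h_K$. The only difference is that you spell out what the paper leaves as cited facts, namely that the zero-average constraint survives the change of variables (via the Jacobian factor) and that it upgrades $|\cdot|_{1,\widehat K}$ to a norm so that equivalence of norms on a finite-dimensional space applies.
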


\begin{proof}
Both inequalities follow from scaling arguments, and the following facts:
\begin{equation}\label{eq:A11}
v\in \mathcal P_{k+1}^0(K) \qquad \Longleftrightarrow \qquad \widehat v\in \mathcal P_{k+1}^0(\widehat K),
\end{equation}
\begin{equation}\label{eq:A12}
\|v\|_{\partial\widehat K}\lesssim \| v\|_{\widehat K}\approx | v|_{1,\widehat K} \qquad \forall v \in \mathcal P_{k+1}^0(\widehat K).
\end{equation}
It is also important to keep in mind \eqref{eq:E1}, which says that the hat symbol is not ambiguous when applied in the interior domain or on the boundary.
Then, the scaling argument is reduced to noticing that for all $v\in \mathcal P_{k+1}^0(K)$,
\begin{alignat*}{4}
\|v \|_{\partial K} &\approx h_K^ {\frac{d-1}2}\|\widehat v\|_{\partial\widehat K} &\qquad&\mbox{(scaling \eqref{eq:6} and meaning of $\widehat v$)}\\
&\lesssim h_K^{\frac{d-1}2} | \widehat v|_{1,\widehat K} &&\mbox{(finite dimensional bound \eqref{eq:A12})}\\
&\approx h_K^{\frac12} |v|_{1,K}, &&\mbox{(scaling \eqref{eq:8})}
\end{alignat*}
and
\begin{alignat*}{4}
\| v\|_K &\approx h_K^{\frac{d}2} \|\widehat v\|_{\widehat K} &\qquad &\mbox{(scaling \eqref{eq:6})}\\
&\approx h_K^{\frac{d}2} |\widehat v|_{1,\widehat K} && \mbox{(finite dimensional bound \eqref{eq:A12})}\\
&\approx h_K |v|_{1,K}. && \mbox{(scaling \eqref{eq:8})}
\end{alignat*}
This finishes the proof.
\end{proof}

\begin{proposition}[Postprocessing]\label{prop:4.2}
Let $(u_h,\mathbf q_h)$ be any approximation of the solution of \eqref{eq:A0} satisfying $(f,1)_K=\langle\mathbf q_h\cdot\mathbf n,1\rangle_{\partial K}$. Then the Stenberg postprocessing \eqref{eq:A8} satisfies:
\begin{alignat*}{4}
\| u-u_h^\star\|_\Omega \lesssim & \| u-\Pi_{k+1} u\|_\Omega+\bigg(\sum_{K\in\mathcal T_h} h_K^2| u-\Pi_{k+1} u|_{1,K}^2\bigg)^{1/2}\\
&+\| u_h-\Pi_k u\|_\Omega+ h \|\mathbf q\cdot\mathbf n-\mathbf q_h\cdot\mathbf n\|_h.
\end{alignat*}
If the discrete conservation property $(f,1)_K=\langle\mathbf q_h\cdot\mathbf n,1\rangle_{\partial K}$ does not hold, then the same bound is satisfied by the solution of \eqref{eq:A10}. 
\end{proposition}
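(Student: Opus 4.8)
The plan is to compare $u_h^\star$ against the elementwise $L^2$-projection $\Pi_{k+1}u$ and to exploit the key identity \eqref{eq:A9o}, which already isolates the flux discrepancy $\mathbf q\cdot\mathbf n-\mathbf q_h\cdot\mathbf n$ as the only nonlocal ingredient. First I would write, on each $K$, $\|u-u_h^\star\|_K\le\|u-\Pi_{k+1}u\|_K+\|\Pi_{k+1}u-u_h^\star\|_K$; the first term produces the $\|u-\Pi_{k+1}u\|_\Omega$ contribution after summing, so everything reduces to estimating the fully polynomial quantity $\eta:=\Pi_{k+1}u-u_h^\star\in\mathcal P_{k+1}(K)$. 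Since both the Stenberg scheme and the projection fix the mean on each element, I would split $\eta$ into its $L^2(K)$-orthogonal pieces $\eta=\bar\eta+\omega$, where $\bar\eta\in\mathcal P_0(K)$ is the elementwise mean and $\omega\in\mathcal P_{k+1}^0(K)$, so that $\|\eta\|_K^2=\|\bar\eta\|_K^2+\|\omega\|_K^2$ and the two parts can be handled independently.

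For the constant part, note that $\Pi_{k+1}$ and $\Pi_k$ both reproduce constants, hence the mean of $\eta$ over $K$ equals the mean of $\Pi_k u-u_h$; a Cauchy--Schwarz estimate on this average gives $\|\bar\eta\|_K\le\|\Pi_k u-u_h\|_K$, which sums to the $\|u_h-\Pi_k u\|_\Omega$ term of the claim.

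For the zero-mean part I would invoke Lemma \ref{lemma:4.1}, which turns $\|\omega\|_K$ into a gradient: $\|\omega\|_K\approx h_K|\omega|_{1,K}=h_K|\eta|_{1,K}$, the last equality because $\omega$ and $\eta$ differ by a constant. To bound $|\eta|_{1,K}$ I would test the identity \eqref{eq:A9o} (or, when the conservation property fails, its restriction to $\mathcal P_{k+1}^0(K)$ coming from \eqref{eq:A10}) with $v=\omega$, and insert $\Pi_{k+1}u$ to produce $(\kappa\nabla\eta,\nabla\eta)_K=(\kappa\nabla(\Pi_{k+1}u-u),\nabla\eta)_K-\langle\mathbf q\cdot\mathbf n-\mathbf q_h\cdot\mathbf n,\omega\rangle_{\partial K}$, using $\nabla\omega=\nabla\eta$ throughout. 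Coercivity and boundedness of $\kappa$ (finite since $\kappa,\kappa^{-1}\in L^\infty$), Cauchy--Schwarz, and the trace bound $\|\omega\|_{\partial K}\lesssim h_K^{1/2}|\eta|_{1,K}$ from the same lemma then yield, after dividing by $|\eta|_{1,K}$, the elementwise estimate $|\eta|_{1,K}\lesssim|u-\Pi_{k+1}u|_{1,K}+h_K^{1/2}\|\mathbf q\cdot\mathbf n-\mathbf q_h\cdot\mathbf n\|_{\partial K}$, whence $\|\omega\|_K\lesssim h_K|u-\Pi_{k+1}u|_{1,K}+h_K^{3/2}\|\mathbf q\cdot\mathbf n-\mathbf q_h\cdot\mathbf n\|_{\partial K}$.

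Finally I would square, sum over $K\in\mathcal T_h$, and take square roots: the first contribution is exactly $\big(\sum_K h_K^2|u-\Pi_{k+1}u|_{1,K}^2\big)^{1/2}$, and for the flux contribution the bookkeeping $h_K^3\|\cdot\|_{\partial K}^2=h_K^2\,(h_K\|\cdot\|_{\partial K}^2)\le h^2\,(h_K\|\cdot\|_{\partial K}^2)$ collapses the sum into $h^2\|\mathbf q\cdot\mathbf n-\mathbf q_h\cdot\mathbf n\|_h^2$, producing the $h\|\mathbf q\cdot\mathbf n-\mathbf q_h\cdot\mathbf n\|_h$ term. Combining the three pieces gives the stated bound, and the concluding remark for the case when conservation fails is covered automatically because the whole argument only ever tests with $\omega\in\mathcal P_{k+1}^0(K)$, which is legitimate for the solution of \eqref{eq:A10}. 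The main obstacle is the zero-mean estimate: one must recognize that testing with $\omega$ and inserting $\Pi_{k+1}u$ produces a Galerkin-orthogonality-type cancellation that leaves only the approximation seminorm and the boundary flux, and then track the powers of $h_K$ carefully so that the trace inequality delivers precisely the $h_K^{3/2}$ scaling needed to match the $\|\cdot\|_h$ norm after summation; the constant-part and best-approximation pieces are routine by comparison.
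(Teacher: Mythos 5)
Your proposal is correct and follows essentially the same route as the paper's proof: both reduce to the polynomial error $u_h^\star-\Pi_{k+1}u$, split it $L^2(K)$-orthogonally into its mean (controlled by $\|u_h-\Pi_k u\|_K$ via the matching mean conditions) and a zero-mean part in $\mathcal P_{k+1}^0(K)$, and then bound the latter by testing the identity \eqref{eq:A9o} with that zero-mean function and invoking Lemma \ref{lemma:4.1} for both the trace bound and the equivalence $\|\cdot\|_K\approx h_K|\cdot|_{1,K}$, before summing over elements. The sign conventions and bookkeeping differ only cosmetically, and your observation that the argument only ever tests with functions in $\mathcal P_{k+1}^0(K)$ is exactly how the paper justifies the final remark about \eqref{eq:A10}.
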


\begin{proof}
Let
\begin{alignat*}{4}
v &:= u_h^\star-\Pi_{k+1} u-\Pi_0(u_h^\star-\Pi_{k+1} u)\\
&=u_h^\star-\Pi_{k+1} u - \Pi_0 (u_h-\Pi_k u) &\qquad &\mbox{(by \eqref{eq:A8} and $\Pi_0\Pi_k=\Pi_0=\Pi_0\Pi_{k+1}$)}
\end{alignat*}
and note that $v|_K\in \mathcal P_{k+1}^0(K)$. Then
\begin{alignat*}{6}
\|\kappa^{1/2}\nabla v\|_K^2=&(\kappa \nabla (u_h^\star-\Pi_{k+1} u),\nabla v)_K &\qquad & (\nabla \Pi_0=0)\\
=&(\kappa \nabla (u-\Pi_{k+1} u),\nabla v)_L+\langle\mathbf q\cdot\mathbf n -\mathbf q_h\cdot\mathbf n,v\rangle_{\partial K}&&\mbox{(by \eqref{eq:A9o})}\\
\le  & |u-\Pi_{k+1} u|_{1,K}\|\kappa^{1/2}\nabla v\|_K\|\kappa^{1/2}\|_{L^\infty}\\
& + h_K^{1/2} \| \mathbf q\cdot\mathbf n-\mathbf q_h\cdot\mathbf n\|_{\partial K} h_K^{-1/2} \|v\|_{\partial K}\\
\lesssim & \big( |u-\Pi_{k+1} u|_{1,K}+h_K^{1/2} \| \mathbf q\cdot\mathbf n-\mathbf q_h\cdot\mathbf n\|_{\partial K}\big)\|\kappa^{1/2}\nabla v\|_K, &&\mbox{(by Lemma \ref{lemma:4.1})}
\end{alignat*}
or, in other words,
\begin{equation}\label{eq:A13}
|v|_{1,K}^2 \lesssim |u-\Pi_{k+1} u|_{1,K}^2+ h_K \|\mathbf q\cdot\mathbf n-\mathbf q_h\cdot\mathbf n\|_{\partial K}^2.
\end{equation}
Therefore
\begin{alignat*}{4}
\| u_h^\star-\Pi_{k+1} u\|_K^2 =&\|\Pi_0(u_h^\star-\Pi_{k+1} u)\|_K^2 +\| v\|_K^2 &\qquad &\mbox{(orthogonal decomp)}\\
=&\|\Pi_0(u_h-\Pi_k u)\|_K^2+\| v\|_K^2 && \mbox{(see definition of $v$)}\\
\lesssim &\| u_h-\Pi_k u\|_K^2 + h_K^2 | v|_{1,K}^2 && \mbox{(by Lemma \ref{lemma:4.1})}\\
\lesssim & \| u_h-\Pi_k u\|_K^2 \\
&+ h_K^2 | u-\Pi_{k+1}u|_{1,K}^2 + h_K^3 \|\mathbf q\cdot\mathbf n-\mathbf q_h\cdot\mathbf n\|_{\partial K}^2, &&\mbox{(by \eqref{eq:A13})}
\end{alignat*}
and to prove the result we only need to collect the contributions of all the elements.
\end{proof}

\noindent
For the RT discretization, assuming superconvergence, the Stenberg postprocessing \eqref{eq:A8} satisfies
\begin{framed}
\[
\| u-u_h^\star\|_\Omega \lesssim h^{k+2}.
\]
\end{framed}

\subsection{A second postprocessing scheme}

\paragraph{Another way of getting a good gradient.} Since $\nabla u=-\kappa^{-1}\mathbf q$, we can use the approximation $\mathbf q_h$ as a way of getting an improvec gradient, using $u_h$ to determine the average of the postprocessed on each element. We then look for
\begin{subequations}\label{eq:C1}
\begin{equation}
u_h^\star\in \prod_{K\in \mathcal T_h} \mathcal P_{k+1}(K),
\end{equation}
satisfying for all $K\in \mathcal T_h$
\begin{alignat}{4}
(\nabla u_h^\star,\nabla v)_K &=-(\kappa^{-1}\mathbf q_h,\nabla v)_K &\qquad &\forall v\in \mathcal P_{k+1}^0(K),\\
(u_h^\star,q)_K &=(u_h,1)_K.
\label{eq:C1c}
\end{alignat}
\end{subequations}

\paragraph{Its analysis.}
Note now that
\[
(\nabla u_h^\star,\nabla v)_K=(\nabla u,\nabla v)_K+(\kappa^{-1}(\mathbf q-\mathbf q_h),\nabla v)_K \qquad \forall v\in \mathcal P_{k+1}^0(K).
\]
Like in the proof of Proposition \ref{prop:4.2}, we consider
\begin{alignat*}{4}
\prod_{K\in \mathcal T_h}\mathcal P_{k+1}^0(K)\ni v &:= u_h^\star-\Pi_{k+1} u-\Pi_0(u_h^\star-\Pi_{k+1} u)\\
&=u_h^\star-\Pi_{k+1} u - \Pi_0 (u_h-\Pi_k u), &\qquad &\mbox{(by \eqref{eq:C1c} and $\Pi_0(\Pi_k-\Pi_{k+1})=0$)}
\end{alignat*}
and write
\begin{alignat*}{4}
|v|_{1,K}^2 &=(\nabla (u_h^\star-\Pi_{k+1} u),\nabla v)_K\\
&=(\nabla (u-\Pi_{k+1} u),\nabla v)_K+(\kappa^{-1} (\mathbf q-\mathbf q_h),\nabla v)_K,
\end{alignat*}
so that, using Lemma \ref{lemma:4.1}, we have bounded
\[
h_K^{-1}\|v\|_K\lesssim | v|_{1,K} \le |u-\Pi_{k+1} u|_{1,K}+\|\kappa^{-1}(\mathbf q-\mathbf q_h\|_K.
\]
What is left follows the final steps of the arguments in Proposition \ref{prop:4.2}, leading to
\[
\| u-u_h^\star\|_\Omega \lesssim \| u-\Pi_{k+1}u \|_\Omega + \|u_h-\Pi_k u\|_\Omega + \bigg( \sum_{K\in \mathcal T_h} h_K^2|u-\Pi_{k+1}u|_{1,K}^2\bigg) + h \|\mathbf q-\mathbf q_h\|_\Omega,
\]
and therefore to superconvergence. Once again, nothing particular about how $(\mathbf q_h,u_h)$ has been produced is used in this argument. However, to reach superconvergence, we obviously need that $\| u_h-\Pi_k u\|_\Omega$, superconverges, as is the case with the RT method.

\subsection{The influence of reaction terms}\label{sec:4.5}

\paragraph{Reaction-diffusion problems.} In this section we will have a look at how the analysis of RT discretization is adapted for the following simple modification of our equations:
\begin{subequations}\label{eq:B1}
\begin{alignat}{4}
\kappa^{-1}\mathbf q+\nabla u &=0 &\quad &\mbox{in $\Omega$},\\
\mathrm{div}\,\mathbf q +c\, u&=f&\quad &\mbox{in $\Omega$},\\
u&=g &&\mbox{on $\Gamma$,}
\end{alignat}
\end{subequations}
where $c:\Omega\to \mathbb R$ is a non-negative function. The seminorm
\[
| u|_c:=(c\,u,u)_\Omega^{1/2}=\| c^{1/2} u\|_\Omega,
\]
will play an important role in the energy analysis of this problem.

\paragraph{Discretization and error equations.}
The RT equations for problem \eqref{eq:B1} are
\begin{subequations}\label{eq:B2}
\begin{alignat}{6}
\label{eq:B2a}
&(\kappa^{-1}\mathbf q_h,\mathbf r)_{\mathcal T_h}-(u_h,\mathrm{div}\,\mathbf r)_{\mathcal T_h}+\langle \widehat u_h,\mathbf r\cdot\mathbf n\rangle_{\partial\mathcal T_h} &&=0 &\qquad &\forall\mathbf r \in \mathbf V_h,\\
\label{eq:B2b}
&(\mathrm{div}\,\mathbf q_h,w)_{\mathcal T_h}+(c\,u_h,w)_{\mathcal T_h}& &=(f,w)_{\mathcal T_h} & &\forall w\in W_h,\\
&\langle\mathbf q_h\cdot\mathbf n,\mu\rangle_{\partial\mathcal T_h\setminus\Gamma} &&=0 & &\forall\mu \in M_h^\circ,\\
&\langle \widehat u_h,\mu\rangle_\Gamma &&=\langle u_0,\mu\rangle_\Gamma & &\forall \mu \in M_h^\Gamma,
\end{alignat}
\end{subequations}
while projections satisfy the following discrete equations
\begin{subequations}\label{eq:B3}
\begin{alignat}{6}
\label{eq:B3a}
&(\kappa^{-1}\boldsymbol\Pi\mathbf q,\mathbf r)_{\mathcal T_h}-(\Pi  u,\mathrm{div}\,\mathbf r)_{\mathcal T_h}+\langle \mathrm P  u,\mathbf r\cdot\mathbf n\rangle_{\partial\mathcal T_h} &&=(\kappa^{-1}\boldsymbol\Pi\mathbf q-\mathbf q,\mathbf r)_{\mathcal T_h} &\quad &\forall\mathbf r \in \mathbf V_h,\\
\label{eq:B3b}
&(\mathrm{div}\,\boldsymbol\Pi\mathbf q,w)_{\mathcal T_h}+(c\,\Pi u,w)_{\mathcal T_h}& &=(f,w)_{\mathcal T_h}\\
&&& \phantom{=}+(c\,(\Pi u-u),w)_{\mathcal T_h} & &\forall w\in W_h,\\
&\langle\boldsymbol\Pi \mathbf q\cdot\mathbf n,\mu\rangle_{\partial\mathcal T_h\setminus\Gamma} &&=0 & &\forall\mu \in M_h^\circ,\\
&\langle  \mathrm P u,\mu\rangle_\Gamma &&=\langle u_0,\mu\rangle_\Gamma & &\forall \mu \in M_h^\Gamma.
\end{alignat}
\end{subequations}
Subtracting the discrete equations \eqref{eq:B2} from \eqref{eq:B3}, we get  the  error equations
\begin{subequations}\label{eq:B4}
\begin{alignat}{6}
\label{eq:B4a}
&(\kappa^{-1}\boldsymbol\varepsilon_h^q,\mathbf r)_{\mathcal T_h}-(\varepsilon_h^u,\mathrm{div}\,\mathbf r)_{\mathcal T_h}+\langle \widehat\varepsilon_h^u,\mathbf r\cdot\mathbf n\rangle_{\partial\mathcal T_h} &&=(\kappa^{-1}(\boldsymbol\Pi\mathbf q-\mathbf q),\mathbf r)_{\mathcal T_h} &\qquad &\forall\mathbf r \in \mathbf V_h,\\
\label{eq:B4b}
&(\mathrm{div}\,\boldsymbol\varepsilon_h^q,w)_{\mathcal T_h}+(c\,\varepsilon_h^u,w)_{\mathcal T_h}& &=(c\,(\Pi u-u),w)_{\mathcal T_h} & &\forall w\in W_h,\\
&\langle\boldsymbol\varepsilon_h^q\cdot\mathbf n,\mu\rangle_{\partial\mathcal T_h\setminus\Gamma} &&=0 & &\forall\mu \in M_h^\circ,\\
\label{eq:B4d}
&\langle  \widehat\varepsilon_h^u,\mu\rangle_\Gamma &&=0 & &\forall \mu \in M_h^\Gamma.
\end{alignat}
\end{subequations}
Testing the error equations \eqref{eq:B4} with $(\boldsymbol\varepsilon_h^q,\varepsilon_h^u,-\widehat\varepsilon_h^u,-\boldsymbol\varepsilon_h^q\cdot\mathbf n)$ and adding the result, we reach the
 new energy identity
\[
(\kappa^{-1}\boldsymbol\varepsilon_h^q,\boldsymbol\varepsilon_h^q)_{\mathcal T_h}+(c\, \varepsilon_h^u,\varepsilon_h^u)_{\mathcal T_h}=(\kappa^{-1}(\boldsymbol\Pi\mathbf q-\mathbf q),\boldsymbol\varepsilon_h^q)_{\mathcal T_h}+(c\,(\Pi u-u),\varepsilon_h^u)_{\mathcal T_h},
\]
thus proving the estimate
\begin{equation}\label{eq:B5}
\|\boldsymbol\varepsilon_h^q\|_{\kappa^{-1}}^2+|\varepsilon_h^u|_c^2\le \|\boldsymbol\Pi\mathbf q-\mathbf q\|_{\kappa^{-1}}^2+|\Pi u-u|_c^2.
\end{equation}
As can be seen from \eqref{eq:B5}, this couples back the estimates for the variable $\mathbf q$ with the approximation properties of $W_h$. The estimate (see \eqref{eq:HH1})
\begin{equation}\label{eq:B6}
\| \boldsymbol\varepsilon_h^q\cdot\mathbf n\|_h \lesssim \|\boldsymbol\varepsilon_h^q\|_{\Omega}
\end{equation}
is a purely finite dimensional one, independent of the equations satisfied by the discrete quantities. In a similar spirit, we can prove \eqref{eq:47} again, i.e., we obtain
\begin{equation}\label{eq:B7}
\|\mathrm P u-\widehat u_h\|_h=
\|\widehat\varepsilon_h^u\|_h \lesssim \|\varepsilon_h^u\|_\Omega + h \|\mathbf q-\mathbf q_h\|_{\kappa^{-1}}.
\end{equation}
This happens because this estimate depends only on the first error equation (the discretization of the equation $\kappa^{-1}\mathbf q+\nabla u=0$) which does not depend on the particular equilibrium equation. The argument to prove that
\begin{equation}\label{eq:B88}
\| \Pi u-u_h\|_\Omega=\|\varepsilon_h^u\|_\Omega \lesssim \| \mathbf q-\mathbf q_h\|_{\kappa^{-1}},
\end{equation}
was especifically based on the commutativity property for the projection and on the first error equation, so nothing has to be changed.

\paragraph{The duality estimate.} The duality argument becomes more complicated as we deal with more complex model problems. Instead of adapting the proof of the superconvergence estimate of the diffusion problem, we are going to show a more systematic way of proving estimates, a methodology that will be extremely useful in HDG analysis. We start with the dual problem
\begin{subequations}\label{eq:B8}
\begin{alignat}{4}
\kappa^{-1} \boldsymbol\xi -\nabla \theta &=0 &\qquad &\mbox{in $\Omega$},\\
-\mathrm{div}\,\boldsymbol\xi +c \,\theta &=\varepsilon_h^u &&\mbox{in $\Omega$},\\
\theta &=0 & &\mbox{on $\Gamma$}.
\end{alignat}
\end{subequations}
Note that this time we have changed signs in both first order operators.
We assume a  regularity hypothesis
\begin{equation}\label{eq:B9}
\| \boldsymbol\xi\|_{1,\Omega}+ \|c\,\theta\|_{1,\Omega}\le C_{\mathrm{reg}} \|\varepsilon_h^u\|_\Omega.
\end{equation}
We start by writing down the discrete equations satisfied by the projections $(\boldsymbol\Pi\boldsymbol\xi,\Pi\theta,\mathrm P\theta)$:
\begin{subequations}\label{eq:B10}
\begin{alignat}{6}
&(\kappa^{-1}\boldsymbol\Pi\boldsymbol\xi,\mathbf r)_{\mathcal T_h}+(\Pi  \theta,\mathrm{div}\,\mathbf r)_{\mathcal T_h}-\langle \mathrm P  \theta,\mathbf r\cdot\mathbf n\rangle_{\partial\mathcal T_h} &&=(\kappa^{-1}\boldsymbol\Pi\boldsymbol\xi-\boldsymbol\xi,\mathbf r)_{\mathcal T_h} &\quad &\forall\mathbf r \in \mathbf V_h,\\
&-(\mathrm{div}\,\boldsymbol\Pi\boldsymbol\xi,w)_{\mathcal T_h}+(c\,\Pi \theta,w)_{\mathcal T_h}& &=(\varepsilon_h^u,w)_{\mathcal T_h}\\
&&& \phantom{=}+(c\,(\Pi \theta-\theta),w)_{\mathcal T_h} & &\forall w\in W_h,\\
&\langle\boldsymbol\Pi \boldsymbol\xi\cdot\mathbf n,\mu\rangle_{\partial\mathcal T_h\setminus\Gamma} &&=0 & &\forall\mu \in M_h^\circ,\\
&\langle  \mathrm P \theta,\mu\rangle_\Gamma &&=\langle u_0,\mu\rangle_\Gamma & &\forall \mu \in M_h^\Gamma.
\end{alignat}
\end{subequations}
We now test the first three equations with $(\boldsymbol\varepsilon_h^q,\varepsilon_h^u,\widehat\varepsilon_h^u)$ and align terms carefully:
\begin{subequations}\label{eq:B11}
\begin{alignat}{8}
(\boldsymbol\Pi\boldsymbol\xi,\kappa^{-1}\boldsymbol\varepsilon_h^q)_{\mathcal T_h} &+(\Pi\theta,\mathrm{div}\,\boldsymbol\varepsilon_h^q)_{\mathcal T_h}&&-\langle \mathrm P\theta,\widehat\varepsilon_h^u\rangle_{\partial\mathcal T_h\setminus\Gamma}&&=(\boldsymbol\Pi\boldsymbol\xi-\boldsymbol\xi,\kappa^{-1}\varepsilon_h^q)_{\mathcal T_h},
\\
-(\mathrm{div}\,\boldsymbol\Pi\boldsymbol\xi,\varepsilon_h^u)_{\mathcal T_h}&+(\Pi\theta,c\,\varepsilon_h^u)_{\mathcal T_h}&& &&=\|\varepsilon_h^u\|_\Omega^2+(\Pi\theta-\theta,c\,\varepsilon_h^u)_{\mathcal T_h},\\
\langle\boldsymbol\Pi\boldsymbol\xi\cdot\mathbf n,\widehat\varepsilon_h^u\rangle_{\mathcal T_h} && && & =0.
\end{alignat}
\end{subequations}
Note that we have used twice that $\widehat\varepsilon_h^u=0$ on $\Gamma$ (this is the fourth of the error equations \eqref{eq:B4}. The next course of action is the addition of equations \eqref{eq:B11}. Close inspection of the columns of the tabulated system \eqref{eq:B11} shows the error equations \eqref{eq:B4} tested with $(\boldsymbol\Pi,\boldsymbol\xi,\Pi\theta,\mathrm P\theta)$. Therefore
\begin{eqnarray*}
&& \hspace{-2cm}(\kappa^{-1} (\boldsymbol\Pi\mathbf q-\mathbf q),\boldsymbol\Pi\boldsymbol\xi)_{\mathcal T_h}+(c(\Pi u-u),\Pi\theta)_{\mathcal T_h}\\
&& = \| \varepsilon_h^u\|_\Omega^2+(\kappa^{-1} (\boldsymbol\Pi\mathbf q-\mathbf q_h),\boldsymbol\Pi\boldsymbol\xi-\boldsymbol\xi)_{\mathcal T_h}+(c\,(\Pi u-u_h),\Pi\theta-\theta)_{\mathcal T_h}.
\end{eqnarray*}
We just reorganize this equality to get
\begin{alignat*}{4}
\| \varepsilon_h^u\|_\Omega^2=&(\kappa^{-1} (\boldsymbol\Pi\mathbf q-\mathbf q),\boldsymbol\Pi\boldsymbol\xi)_{\mathcal T_h}-
(\kappa^{-1} (\boldsymbol\Pi\mathbf q-\mathbf q_h),\boldsymbol\Pi\boldsymbol\xi-\boldsymbol\xi)_{\mathcal T_h}\\
&+(c(\Pi u-u),\Pi\theta)_{\mathcal T_h}-(c\,(\Pi u-u_h),\Pi\theta-\theta)_{\mathcal T_h}\\
=& (\kappa^{-1} (\mathbf q_h-\mathbf q),\boldsymbol\Pi\boldsymbol\xi-\boldsymbol\xi)_{\mathcal T_h}+(\kappa^{-1}(\boldsymbol\Pi\mathbf q-\mathbf q),\boldsymbol\xi)_{\mathcal T_h} &\quad &\mbox{(add and subtract $\boldsymbol\xi$)}\\
&+(c\,(u_h-u),\Pi\theta-\theta)_{\mathcal T_h}+(c\,(\Pi u-u),\theta)_{\mathcal T_h} &&\mbox{(add and subtract $\theta$)}\\
=&(\kappa^{-1} (\mathbf q_h-\mathbf q),\boldsymbol\Pi\boldsymbol\xi-\boldsymbol\xi)_{\mathcal T_h}+(c\,(u_h-u),\Pi\theta-\theta)_{\mathcal T_h} &&\mbox{(these are fine)}\\
&+(\boldsymbol\Pi\mathbf q-\mathbf q,\nabla\theta)_{\mathcal T_h} + (\Pi u-u,c\,\theta)_{\mathcal T_h}. && (\kappa^{-1}\boldsymbol\xi=\nabla\theta)
\end{alignat*}
The last two terms need some additional work. The second one is easy:
\[
(\Pi u-u,c\,\theta)_{\mathcal T_h}=(\Pi u-u,c\,\theta-\Pi (c\,\theta))_{\mathcal T_h}.
\]
In the first one we start with integration by parts
\begin{alignat*}{4}
(\boldsymbol\Pi\mathbf q-\mathbf q,\nabla\theta)_{\Omega} &=-(\mathrm{div}\,(\boldsymbol\Pi\mathbf q-\mathbf q),\theta)_\Omega +\langle (\boldsymbol\Pi\mathbf q-\mathbf q)\cdot\mathbf n,\theta\rangle_\Gamma &\quad&\mbox{(as $\boldsymbol\Pi\mathbf q-\mathbf q\in \mathbf H(\mathrm{div},\Omega)$)}\\
&=-(\mathrm{div}\,(\boldsymbol\Pi\mathbf q-\mathbf q),\theta)_\Omega &&\mbox{(BC for dual problem)}
\\
&=-(\Pi (\mathrm{div}\,\mathbf q)-\mathrm{div}\,\mathbf q,\theta)_\Omega &&\mbox{(commutativity prop)}\\
&=(\Pi (\mathrm{div}\,\mathbf q)-\mathrm{div}\,\mathbf q,\Pi\theta-\theta)_\Omega.
\end{alignat*}
Collecting these equalities and applying bounds on low order estimates for the projections we get
\begin{alignat*}{4}
\|\varepsilon_h^u\|_\Omega^2 \lesssim & h \|\mathbf q-\mathbf q_h\|_\Omega\,|\boldsymbol\xi|_{1,\Omega}+h|u-u_h|_c |\theta|_{1,\Omega}\\
&+ h\|\Pi(\mathrm{div}\,\mathbf q)-\mathrm{div}\,\mathbf q\|_\Omega |\theta|_{1,\Omega}+ h \|\Pi u-u\|_\Omega |c\,\theta|_{1,\Omega},
\end{alignat*}
which together with the regularity assumption \eqref{eq:B9} and the energy estimate \eqref{eq:B5} proves superconvergence:
\begin{equation}
\|\varepsilon_h^u\|_\Omega \lesssim h(\|\Pi\mathbf q-\mathbf q\|_\Omega+\|\Pi u-u\|_\Omega+\|\Pi(\mathrm{div}\,\mathbf q)-\mathrm{div}\,\mathbf q\|_\Omega).
\end{equation}

\paragraph{Some notes.}
As can be seen from these arguments, duality estimates are not a smooth ride, but they follow quite predictable patterns. The reader can wonder how it was the case that the duality estimate when $c=0$ seemed so much simpler. There is a simple reason: when $c=0$, then $\mathrm{div}\,\boldsymbol\varepsilon_h^q=0$, and it is simple to show (from the first error equation) that
\[
(\kappa^{-1}\boldsymbol\xi,\boldsymbol\varepsilon_h^q)_{\mathcal T_h}=0,
\]
which takes us back to some of the simpler estimates of Section \ref{sec:3.4}. Note also that when $k\ge 1$, we can write
\[
(\boldsymbol\Pi\mathbf q-\mathbf q,\nabla\theta)_\Omega=(\boldsymbol\Pi\mathbf q-\mathbf q,\nabla\theta-\boldsymbol\Pi_0(\nabla\theta))_\Omega\lesssim h \|\boldsymbol\Pi\mathbf q-\mathbf q\|_\Omega |\theta|_{2,\Omega},
\]
which leads to a slightly different regularity assumption and does not require integration by parts to make the additional power of $h$. This argument does not hold in the lower order case $k=0$, because the projection does not include any internal degrees of freedom.

\section{Introducing BDM}

In this section we go over all the needed changes to modify the projection-based analysis of RT elements to a similar analysis of a loosely called Brezzi-Douglas-Marini BDM element (we'll deal with names later on). For purposes of comparison, we will stick to the following table, lining up the boundary d.o.f. and not the space that we used for the variable $u_h$. (The definition of the N\'ed\'elec space $\mathcal N_{k-2}$ is given in Section \ref{sec:5.1}.)
\[
\begin{tabular}{|c|c|c|c|c|}
\hline 
degree$\phantom{\Big|}$ & $\mathbf q_h$ & $u_h$ & boundary d.o.f. & internal d.o.f.\\
\hline
$k\ge 0\phantom{\Big|}$ & $\mathcal{RT}_k(K)=\boldsymbol{\mathcal P}_k(K)+\mathbf m\tildeP_k(K)$ &  $\mathcal P_k(K)$& $\mathcal R_k(\partial K)$& $\boldsymbol{\mathcal P}_{k-1}(K)$\\
$k\ge 1\phantom{\Big|}$ & $\boldsymbol{\mathcal P}_k(K)$ & $\mathcal P_{k-1}(K)$ & $\mathcal R_k(\partial K)$ & $\mathcal N_{k-2}(K)$\\
\hline
\end{tabular}
\]

\subsection{The N\'ed\'elec space}\label{sec:5.1}

Consider the spaces
\[
\mathcal N_k(K):=\boldsymbol{\mathcal P}_k(K)\oplus \{ \mathbf q\in \widetilde{\boldsymbol{\mathcal P}}_{k+1}(K)\,:\, \mathbf q\cdot\mathbf m=0\},
\]
which obviously satisfies
\[
\boldsymbol{\mathcal P}_k(K)\subset \mathcal N_k(K)\subset \boldsymbol{\mathcal P}_{k+1}(K).
\]

\begin{proposition}\label{prop:5.1}\
\begin{itemize}
\item[{\rm (a)}] 
$
\mathrm{dim}\, \mathcal N_k(K)=d\,\mathrm{dim}\,\mathcal P_{k+1}(K)-\mathrm{dim} \,\widetilde{\mathcal P}_{k+2}(K).
$
\item[{\rm (b)}] 
$
\mathrm{dim}\,\mathcal N_{k-1}(K)+\mathrm{dim}\,\mathcal R_{k+1}(\partial K)=\mathrm{dim}\,\boldsymbol{\mathcal P}_{k+1}(K).
$
\item[{\rm (c)}]
$
\mathcal N_k(K)\oplus \nabla \widetilde{\mathcal P}_{k+2}(K)=\boldsymbol{\mathcal P}_{k+1}(K).
$
\item[{\rm (d)}]
$
\mathbf q\in \mathcal N_k(K) \, \Longleftrightarrow\,\widecheck{\mathbf q}\in \mathcal N_k(\widehat K).
$
\end{itemize}
\end{proposition}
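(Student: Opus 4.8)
The plan is to dispatch (a)--(c) by dimension counting anchored on one Euler-type observation, and to treat (d)---the genuinely substantive part---by first replacing the definition of $\mathcal N_k$ with an affine-invariant characterization.

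For (a) I would note that the sum defining $\mathcal N_k(K)$ is direct, its two summands occupying disjoint ranges of homogeneous degree ($\le k$ versus exactly $k+1$), so $\mathrm{dim}\,\mathcal N_k(K)=\mathrm{dim}\,\boldsymbol{\mathcal P}_k(K)+\mathrm{dim}\,S_k$ with $S_k:=\{\mathbf q\in\widetilde{\boldsymbol{\mathcal P}}_{k+1}(K):\mathbf q\cdot\mathbf m=0\}$. Now $S_k$ is exactly the kernel of the linear map $\mathbf q\mapsto\mathbf q\cdot\mathbf m$ from $\widetilde{\boldsymbol{\mathcal P}}_{k+1}(K)$ into $\widetilde{\mathcal P}_{k+2}(K)$, and Euler's homogeneous function theorem shows this map is onto (any homogeneous $p$ of degree $k+2$ equals $\tfrac{1}{k+2}\,\mathbf m\cdot\nabla p$, with $\nabla p\in\widetilde{\boldsymbol{\mathcal P}}_{k+1}(K)$). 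Rank--nullity gives $\mathrm{dim}\,S_k=d\,\mathrm{dim}\,\widetilde{\mathcal P}_{k+1}(K)-\mathrm{dim}\,\widetilde{\mathcal P}_{k+2}(K)$, and combining with $\mathrm{dim}\,\mathcal P_k(K)+\mathrm{dim}\,\widetilde{\mathcal P}_{k+1}(K)=\mathrm{dim}\,\mathcal P_{k+1}(K)$ yields (a). Part (b) is then pure arithmetic: substitute (a) at level $k-1$ and use $\mathrm{dim}\,\mathcal R_{k+1}(\partial K)=(d+1)\,\mathrm{dim}\,\widetilde{\mathcal P}_{k+1}(K)$, so the $\widetilde{\mathcal P}_{k+1}$ contributions combine into $d\,\mathrm{dim}\,\widetilde{\mathcal P}_{k+1}(K)$ and leave $d\,\mathrm{dim}\,\mathcal P_{k+1}(K)=\mathrm{dim}\,\boldsymbol{\mathcal P}_{k+1}(K)$.

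For (c) I would verify the sum is direct and then match dimensions. Since $\mathcal N_k(K)\subset\boldsymbol{\mathcal P}_{k+1}(K)$ and every $\nabla p$ with $p\in\widetilde{\mathcal P}_{k+2}(K)$ is homogeneous of degree $k+1$, both spaces sit inside $\boldsymbol{\mathcal P}_{k+1}(K)$. If $\nabla p\in\mathcal N_k(K)$ with $p\in\widetilde{\mathcal P}_{k+2}(K)$, then $\nabla p$ is purely homogeneous of degree $k+1$, hence lies in $S_k$ and satisfies $0=\nabla p\cdot\mathbf m=(k+2)\,p$ by Euler, forcing $p=0$; the sum is therefore direct. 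Because $\nabla$ is injective on $\widetilde{\mathcal P}_{k+2}(K)$ (its kernel, the constants, meets $\widetilde{\mathcal P}_{k+2}$ only in $0$ as $k+2\ge1$), the dimensions add to $\mathrm{dim}\,\mathcal N_k(K)+\mathrm{dim}\,\widetilde{\mathcal P}_{k+2}(K)=d\,\mathrm{dim}\,\mathcal P_{k+1}(K)=\mathrm{dim}\,\boldsymbol{\mathcal P}_{k+1}(K)$ by (a), so the direct sum exhausts $\boldsymbol{\mathcal P}_{k+1}(K)$.

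The crux is (d), where the naive route fails: the summand $\widetilde{\boldsymbol{\mathcal P}}_{k+1}(K)$ and the pointwise constraint $\mathbf q\cdot\mathbf m=0$ on the top-degree part are not preserved by an affine change of variables, since the offset $\mathbf b$ in $\mathrm F$ destroys homogeneity. My plan is first to prove the degree-based, affine-robust description
\[
\mathcal N_k(K)=\{\mathbf q\in\boldsymbol{\mathcal P}_{k+1}(K):\mathbf q\cdot\mathbf m\in\mathcal P_{k+1}(K)\},
\]
obtained by splitting $\mathbf q$ into its degree-$\le k$ and homogeneous degree-$(k+1)$ parts and noting that $\mathbf q\cdot\mathbf m$ drops to degree $\le k+1$ exactly when the top part $\mathbf q_{k+1}$ satisfies $\mathbf q_{k+1}\cdot\mathbf m=0$. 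I would then invoke the already-established invariance of $\boldsymbol{\mathcal P}_{k+1}$ under the check transform, and, writing $\mathrm F(\widehat{\mathbf x})=\mathrm B\widehat{\mathbf x}+\mathbf b$ and letting $\mathbf m$ now denote the position field $\widehat{\mathbf x}\mapsto\widehat{\mathbf x}$ on $\widehat K$, compute directly
\[
\widecheck{\mathbf q}(\widehat{\mathbf x})\cdot\widehat{\mathbf x}=\big(\mathrm B^\top(\mathbf q\circ\mathrm F)\big)\cdot\widehat{\mathbf x}=(\mathbf q\circ\mathrm F)\cdot(\mathrm B\widehat{\mathbf x})=(\mathbf q\cdot\mathbf m)\circ\mathrm F-(\mathbf q\circ\mathrm F)\cdot\mathbf b.
\]
The affine offset surfaces but does no harm: if $\mathbf q\cdot\mathbf m\in\mathcal P_{k+1}(K)$, then $(\mathbf q\cdot\mathbf m)\circ\mathrm F\in\mathcal P_{k+1}(\widehat K)$ (affine composition preserves degree) and $(\mathbf q\circ\mathrm F)\cdot\mathbf b\in\mathcal P_{k+1}(\widehat K)$ as well (a constant-coefficient combination of the components of $\mathbf q\circ\mathrm F\in\boldsymbol{\mathcal P}_{k+1}(\widehat K)$), so $\widecheck{\mathbf q}\cdot\widehat{\mathbf x}\in\mathcal P_{k+1}(\widehat K)$ and $\widecheck{\mathbf q}\in\mathcal N_k(\widehat K)$. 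The reverse implication needs no new work, since the inverse of the check transform is the check transform associated with $\mathrm G=\mathrm F^{-1}$ and the same computation runs backwards. The expected obstacle is precisely this step: recognizing that the raw definition is not transform-friendly, and that the degree characterization is exactly what renders the offset term $(\mathbf q\circ\mathrm F)\cdot\mathbf b$ irrelevant to membership.
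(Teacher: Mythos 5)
Your proposal is correct. For (a)--(c) it is essentially the paper's own argument: the same surjection $\mathbf p\mapsto\mathbf p\cdot\mathbf m$ from $\widetilde{\boldsymbol{\mathcal P}}_{k+1}(K)$ onto $\widetilde{\mathcal P}_{k+2}(K)$ with rank--nullity (the paper asserts surjectivity without proof; your Euler identity $p=\smallfrac1{k+2}\,\mathbf m\cdot\nabla p$ supplies it), the same arithmetic for (b), and the same directness-plus-dimension count for (c), the only cosmetic difference being that the paper establishes $\mathcal S_{k+1}\oplus\nabla\widetilde{\mathcal P}_{k+2}=\widetilde{\boldsymbol{\mathcal P}}_{k+1}$ inside the top homogeneous component and then adjoins $\boldsymbol{\mathcal P}_k(K)$, while you argue directly at the level of $\boldsymbol{\mathcal P}_{k+1}(K)$. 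Where you genuinely diverge is (d). The paper reduces to $\mathbf q$ in the homogeneous constrained part $\mathcal S_{k+1}$, computes $\widecheck{\mathbf q}(\widehat{\mathbf x})\cdot(\widehat{\mathbf x}+\mathrm B_K^{-1}\mathbf b_K)=\mathbf q(\mathrm F_K(\widehat{\mathbf x}))\cdot\mathrm F_K(\widehat{\mathbf x})=0$, and then splits $\widecheck{\mathbf q}$ into homogeneous top part plus lower-degree part to read off $\widetilde{\mathbf q}\cdot\mathbf m=0$ from the degree-$(k+2)$ component; you instead prove once and for all the affine-invariant characterization $\mathcal N_k(K)=\{\mathbf q\in\boldsymbol{\mathcal P}_{k+1}(K):\mathbf q\cdot\mathbf m\in\mathcal P_{k+1}(K)\}$ and then check that the check transform preserves it. The underlying observations are identical (transposing $\mathrm B_K$ across the dot product, and the fact that the affine offset $\mathbf b_K$ only contributes terms of degree at most $k+1$), so this is a repackaging rather than a new mechanism; what your version buys is that both implications of the equivalence follow at once from the symmetry of the characterization, whereas the paper proves one implication and appeals, somewhat tersely, to bijectivity of $\mathbf q\mapsto\widecheck{\mathbf q}$ (together with preservation of $\boldsymbol{\mathcal P}_k$) for the converse. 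Both routes are complete and correct.
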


\begin{proof}
It is easy to see that the linear operator
\[
\widetilde{\boldsymbol{\mathcal P}}_{k+1}(K) \ni \mathbf p \longmapsto T\mathbf p:=\mathbf p\cdot\mathbf m \in \widetilde{\mathcal P}_{k+2}(K) 
\]
is onto. Hence, 
\begin{alignat*}{4}
\dimm \mathcal N_k(K) &=\dimm \boldsymbol{\mathcal P}_k(K)+ \dimm \mathrm{ker}\,T&\quad& (\mathcal N_k=\boldsymbol{\mathcal P}_k\oplus\mathrm{ker}\,T)\\
&= d \, \dimm \mathcal P_k(K) + \dimm \tildePP_{k+1}(K) -\dimm \tildeP_{k+2}(K) &&\mbox{($T$ is onto)}\\
&=d (\dimm \mathcal P_k(K)+\dimm \tildeP_{k+1}(K))-\dimm \tildeP_{k+2}(K),
\end{alignat*}
which proves (a). To prove (b), note that by (a)
\begin{alignat*}{4}
\dimm \mathcal N_{k-1}(K)+\dimm \mathcal R_{k+1}(K) &=d\, \dimm \mathcal P_k(K)-\dimm \tildeP_{k+1}(K)+ (d+1) \dimm \mathcal P_{k+1}(e) \\
&=d\,\dimm \mathcal P_k(K)+ d\,\dimm \tildeP_{k+1}(K),
\end{alignat*}
where $e$ denotes any of the faces of $K$. Let now
\[
\mathcal S_{k+1}=\{ \mathbf q\in \widetilde{\boldsymbol{\mathcal P}}_{k+1}(K)\,:\, \mathbf q\cdot\mathbf m=0\}.
\]
 On the one hand $\mathcal S_{k+1}+\nabla \tildeP_{k+2}\subseteq \tildePP_{k+1}$ and this sum is direct, since if $p\in \tildeP_{k+2}$, then $\nabla p\cdot\mathbf m=(k+2) p$ by the Euler homogeneous function theorem. Finally,
\begin{alignat*}{4}
\dimm (\mathcal S_{k+1}\oplus\nabla \tildeP_{k+2}) &= \dimm \mathcal S_{k+2}+\dimm \tildeP_{k+2} &\quad & \mbox{($\nabla$ is 1-1)}\\
&= d\, \dimm \tildeP_{k+1}-\dimm \tildeP_{k+2}+\dimm \tildeP_{k+2} & & \mbox{(computation to prove (a))}\\
&=\dimm \tildePP_{k+1},
\end{alignat*} 
and therefore $\mathcal S_{k+1}\oplus\nabla \tildeP_{k+2}= \tildePP_{k+1}$ and (c) holds.

To prove (d), we just need to show that if $\mathbf q \in \widetilde S_{k+1}$, then $\widecheck{\mathbf q}\in \mathcal N_k(\widehat K)$ (note that the transformation $\mathbf q\mapsto \widecheck{\mathbf q}$ is a bijection. Let then $\mathbf q\in \mathcal S_{k+1}$ and $\mathrm F_K(\widehat{\mathbf x})=\mathrm B_K\widehat{\mathbf x}+\mathbf b_K$. Then
\begin{alignat*}{4}
\widecheck{\mathbf q}(\widehat{\mathbf x})\cdot (\widehat{\mathbf x}+\mathrm B_K^{-1}\mathbf b_K) &=\mathrm B_K^\top \mathbf q(\mathrm F_K(\widehat{\mathbf x}))\cdot  (\widehat{\mathbf x}+\mathrm B_K^{-1}\mathbf b_K) &\qquad &\mbox{(definition of $\widecheck{\mathbf q}$)}\\
&=\mathbf q(\mathrm F_K(\widehat{\mathbf x}))\cdot (\mathrm B_K\widehat{\mathbf x}+\mathbf b_K)\\
&=\mathbf q(\mathrm F_K(\widehat{\mathbf x}))\cdot\mathrm F_K(\widehat{\mathbf x})=0. & &\mbox{(since $\mathbf q\in \mathcal S_{k+1}$)}
\end{alignat*}
If we now decompose $\widecheck{\mathbf q}\in \boldsymbol{\mathcal P}_{k+1}(\widehat K)=\widetilde{\mathbf q}+\mathbf q_k$, where $\widetilde{\mathbf q}\in \tildePP_{k+1}(\widehat K)$ and $\mathbf q_k\in \boldsymbol{\mathcal P}_k(\widehat K)$, then we have (with $\mathbf c:=\mathrm B_K^{-1}\mathbf b_K$)
\[
0=\widecheck{\mathbf q}(\widehat{\mathbf x})\cdot(\widehat{\mathbf x}+\mathbf c)=\underbrace{\widetilde{\mathbf q}(\widehat{\mathbf x})\cdot\widehat{\mathbf x}}_{\in \tildeP_{k+2}(\widehat K)}+\underbrace{\widetilde{\mathbf q}(\widehat{\mathbf x})\cdot\mathbf c+\mathbf q_k(\widehat{\mathbf x})\cdot(\widehat{\mathbf x}+\mathbf c)}_{\in \boldsymbol{\mathcal P}_{k+1}(\widehat K)},
\]
and therefore $\widetilde{\mathbf q}\cdot\mathbf m=0$, which implies that $\widetilde{\mathbf q}\in \mathcal S_{k+1}$ and therefore $\widecheck{\mathbf q}\in \mathcal N_k(\widehat K)$.
\end{proof}

\paragraph{The two dimensional spaces.} When $d=2$, it is easy to see that
\[
(q_1,q_2)\in \mathcal{RT}_k(K)\qquad\Longrightarrow\qquad (-q_2,q_1)\in \mathcal N_k(K)
\]
and
\[
\mathrm{dim}\,\mathcal{RT}_k(K)=\mathrm{dim}\,\mathcal N_k(K),
\]
and therefore
\[
(q_1,q_2)\in \mathcal{RT}_k(K)\qquad\Longleftrightarrow\qquad (-q_2,q_1)\in \mathcal N_k(K),
\]
which means that the N\'ed\'elec space follows from a $\pi/2$-rotation of the Raviart-Thomas space in two space dimensions.

\subsection{The BDM projection}

The BDM projection is the interpolation operator associated to a mixed finite element named after Franco Brezzi, Jim Douglas Jr, and Donatella Marini. The BDM element was first introduced in the two dimensional case, by Brezzi, Douglas, and Marini \cite{BrDoMa:1985}, with slightly different internal degrees of freedom from those we are going to see here. The three dimensional version that we will see here is due to Jean-Claude N\'ed\'elec \cite{Nedelec:1986}. There is another variant of this three dimensional BDM element due to Brezzi, Douglas, Ricardo Dur\'an and Michel Fortin \cite{BrDoDuFo:1987}.

\begin{framed}
\noindent{\bf The BDM projection.} Let $\mathbf q:K\to\mathbb R^d$ be sufficiently smooth. For $k\ge 1$, the BDM projection is $\boldsymbol\Pi^{\mathrm{BDM}}\mathbf q\in \mathcal{P}_k(K)$ characterized by the equations
\begin{subequations}\label{eq:BDM}
\begin{alignat}{4}
\label{eq:BDMa}
(\boldsymbol\Pi^{\mathrm{BDM}}\mathbf q,\mathbf r)_K &=(\mathbf q,\mathbf r)_K &\qquad &\forall \mathbf r\in \mathcal N_{k-2}(K),\\
\label{eq:BDMb}
\langle \boldsymbol\Pi^{\mathrm{BDM}}\mathbf q\cdot\mathbf n,\mu\rangle_{\partial K}&=\langle \mathbf q\cdot\mathbf n,\mu\rangle_{\partial K} & &\forall \mu \in \mathcal R_k(\partial K).
\end{alignat}
The associated scalar projection is $\Pi_{k-1} u\in \mathcal P_{k-1}(K)$
\begin{equation}
(\Pi_{k-1} u,v)_K=(u,v)_K \qquad \forall v\in \mathcal P_{k-1}(K).
\end{equation}
In the case $k=1$, equations \eqref{eq:BDMa} are void.
\end{subequations}
\end{framed}

\begin{proposition}[Definition of the BDM projection]
Equations \eqref{eq:BDMa} and \eqref{eq:BDMb} are uniquely solvable.
\end{proposition}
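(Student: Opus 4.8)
The plan is to imitate the uniqueness argument already used for the RT projection. Since Proposition~\ref{prop:5.1}(b), read with $k$ replaced by $k-1$, gives exactly
\[
\dim\boldsymbol{\mathcal P}_k(K)=\dim\mathcal N_{k-2}(K)+\dim\mathcal R_k(\partial K),
\]
the equations \eqref{eq:BDMa}--\eqref{eq:BDMb} amount to a square linear system, so it suffices to prove uniqueness: if $\mathbf q\in\boldsymbol{\mathcal P}_k(K)$ satisfies $(\mathbf q,\mathbf r)_K=0$ for all $\mathbf r\in\mathcal N_{k-2}(K)$ and $\langle\mathbf q\cdot\mathbf n,\mu\rangle_{\partial K}=0$ for all $\mu\in\mathcal R_k(\partial K)$, then $\mathbf q=\mathbf 0$.

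First I would read off the boundary information: because $\mathbf q\in\boldsymbol{\mathcal P}_k(K)$, its normal trace $\mathbf q\cdot\mathbf n$ belongs to $\mathcal R_k(\partial K)$, so taking $\mu=\mathbf q\cdot\mathbf n$ in the second equation forces $\mathbf q\cdot\mathbf n=0$ on $\partial K$. Next I would show $\mathrm{div}\,\mathbf q=0$. Setting $v:=\mathrm{div}\,\mathbf q\in\mathcal P_{k-1}(K)$ and integrating by parts,
\[
\|\mathrm{div}\,\mathbf q\|_K^2=-(\mathbf q,\nabla v)_K+\langle\mathbf q\cdot\mathbf n,v\rangle_{\partial K}.
\]
The boundary term vanishes by the previous step, while $\nabla v\in\boldsymbol{\mathcal P}_{k-2}(K)\subset\mathcal N_{k-2}(K)$, so the volume term vanishes by the first equation; hence $\mathrm{div}\,\mathbf q=0$.

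The decisive step is to upgrade the orthogonality against $\mathcal N_{k-2}(K)$ to orthogonality against all of $\boldsymbol{\mathcal P}_{k-1}(K)$. For this I would invoke Proposition~\ref{prop:5.1}(c) with $k$ replaced by $k-2$, namely
\[
\boldsymbol{\mathcal P}_{k-1}(K)=\mathcal N_{k-2}(K)\oplus\nabla\widetilde{\mathcal P}_k(K).
\]
Given $\mathbf s\in\boldsymbol{\mathcal P}_{k-1}(K)$, split $\mathbf s=\mathbf r+\nabla p$ with $\mathbf r\in\mathcal N_{k-2}(K)$ and $p\in\widetilde{\mathcal P}_k(K)$; then $(\mathbf q,\mathbf r)_K=0$ by hypothesis and $(\mathbf q,\nabla p)_K=-(\mathrm{div}\,\mathbf q,p)_K+\langle\mathbf q\cdot\mathbf n,p\rangle_{\partial K}=0$ by the two facts just established. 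Thus $\mathbf q\in\boldsymbol{\mathcal P}_k^\bot(K)$, and together with $\mathbf q\cdot\mathbf n=0$ on $\partial K$ Lemma~\ref{lemma:2.1}(b) yields $\mathbf q=\mathbf 0$.

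I expect the only genuine subtlety to be keeping the index shifts in Proposition~\ref{prop:5.1}(b)--(c) straight; the skeleton is precisely the RT uniqueness proof, with the divergence-free reduction playing the role that Proposition~\ref{prop:2.3}(d) did there, and Lemma~\ref{lemma:2.1}(b) closing the argument. The case $k=1$ is covered by the same reasoning upon reading $\mathcal N_{-1}(K)=\{\mathbf 0\}$, so that \eqref{eq:BDMa} is vacuous and one uses $\boldsymbol{\mathcal P}_0(K)=\nabla\widetilde{\mathcal P}_1(K)$ in the final step.
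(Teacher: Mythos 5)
Your proof is correct, and its skeleton matches the paper's: squareness of the system via Proposition \ref{prop:5.1}(b), the deduction $\mathbf q\cdot\mathbf n=0$ from the boundary equations, the upgrade from orthogonality against $\mathcal N_{k-2}(K)$ to orthogonality against all of $\boldsymbol{\mathcal P}_{k-1}(K)$ via the decomposition in Proposition \ref{prop:5.1}(c), and the final appeal to Lemma \ref{lemma:2.1}(b). The one genuine difference is the middle step. You first prove $\mathrm{div}\,\mathbf q=0$ (testing against $\nabla(\mathrm{div}\,\mathbf q)\in\boldsymbol{\mathcal P}_{k-2}(K)\subset\mathcal N_{k-2}(K)$) and then kill $(\mathbf q,\nabla p)_K$ for $p\in\widetilde{\mathcal P}_k(K)$ by a single integration by parts. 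The paper never establishes $\mathrm{div}\,\mathbf q=0$; instead it treats $(\mathbf q,\nabla u)_K$ for $u\in\widetilde{\mathcal P}_k(K)$ directly through a projection trick, $(\mathbf q,\nabla u)_K=-(\mathrm{div}\,\mathbf q,u)_K=-(\mathrm{div}\,\mathbf q,\Pi_{k-1}u)_K=(\mathbf q,\nabla\Pi_{k-1}u)_K=0$, exploiting that $\mathrm{div}\,\mathbf q\in\mathcal P_{k-1}(K)$ and $\nabla\Pi_{k-1}u\in\boldsymbol{\mathcal P}_{k-2}(K)\subset\mathcal N_{k-2}(K)$. The two mechanisms cost the same; yours is closer in spirit to the RT uniqueness proof, where the vanishing divergence was also the pivot, and is arguably more transparent, while the paper's bypasses the explicit divergence-free statement at the price of an extra back-and-forth integration by parts. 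Incidentally, your index bookkeeping is the right one: the decomposition needed is $\mathcal N_{k-2}(K)\oplus\nabla\widetilde{\mathcal P}_k(K)=\boldsymbol{\mathcal P}_{k-1}(K)$, whereas the paper's proof writes $\nabla\widetilde{\mathcal P}_{k-1}(K)$ at this point, which is a typo; your remark on the vacuous case $k=1$ (with $\mathcal N_{-1}(K)=\{\mathbf 0\}$ and $\boldsymbol{\mathcal P}_0(K)=\nabla\widetilde{\mathcal P}_1(K)$) is also a detail the paper leaves implicit.
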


\begin{proof}
By Proposition \ref{prop:5.1}(b), these equations make up a square system of linear equations, so we only need to prove uniqueness of solution of the homogeneous problem. Let thus $\mathbf q\in \boldsymbol{\mathcal P}_k(K)$ satisfy
\begin{subequations}\label{eq:100}
\begin{alignat}{4}
\label{eq:100a}
(\mathbf q,\mathbf r)_K &=0 &\qquad &\forall \mathbf r\in \mathcal N_{k-2}(K),\\
\label{eq:100b}
\langle \mathbf q\cdot\mathbf n,\mu\rangle_{\partial K}&=0 & &\forall \mu \in \mathcal R_k(\partial K).
\end{alignat}
\end{subequations}
Equation \eqref{eq:100b} implies that $\mathbf q\cdot\mathbf n=0$ on $\partial K$.
Take now $u\in \tildeP_k(K)$ and note that
\begin{alignat*}{4}
(\mathbf q,\nabla u)_K &=-(\mathrm{div}\,\mathbf q,u)_K &\qquad & \mbox{(integration by parts and $\mathbf q\cdot\mathbf n=0$)}\\
&=-(\mathrm{div}\,\mathbf q,\Pi_{k-1}u)_K & & \mbox{($\mathrm{div}\,\mathbf q\in \mathcal P_{k-1}(K)$)}\\
&=(\mathbf q,\nabla \Pi_{k-1} u)_K & & \mbox{(integration by parts and $\mathbf q\cdot\mathbf n=0$)}\\
&=0. & & \mbox{($\nabla \Pi_{k-1} u\in \boldsymbol{\mathcal P}_{k-2}(K)\subset \mathcal N_{k-2}(K)$ and \eqref{eq:100a})}
\end{alignat*}
Therefore $(\mathbf q,\mathbf r)_K=0$ for all $\mathbf r\in \mathcal N_{k-2}(K) + \nabla\tildeP_{k-1}(K)=\boldsymbol{\mathcal P}_{k-1}(K)$ (this was proved in Proposition \ref{prop:5.1}(c)). This means that $\mathbf q\in \boldsymbol{\mathcal P}_k^\bot(K)$ and $\mathbf q\cdot\mathbf n=0$ on $\partial K$, which implies (by Lemma \ref{lemma:2.1}(b)) that $\mathbf q=\mathbf 0$.
\end{proof}

\paragraph{The commutativity property.} For all $\mathbf q$ and $u\in \mathcal P_{k-1}(K)$,
\begin{alignat*}{4}
(\mathrm{div}\,\boldsymbol\Pi^{\mathrm{BDM}}\mathbf q,u)_K &=\langle\boldsymbol\Pi^{\mathrm{BDM}}\mathbf q\cdot\mathbf n,u\rangle_{\partial K}-(\boldsymbol\Pi^{\mathrm{BDM}}\mathbf q,\nabla u)_K\\
&=\langle\mathbf q\cdot\mathbf n, u\rangle_{\partial K}-(\mathbf q,\nabla u)_K & &\mbox{(by \eqref{eq:BDM} and $\nabla u\in \mathcal N_{k-2}(K)$)}\\
&=(\mathrm{div}\,\mathbf q,u)_K
\end{alignat*}
and therefore
\begin{equation}\label{eq:53}
\mathrm{div}\,\boldsymbol\Pi^{\mathrm{BDM}}\mathbf q=\Pi_{k-1}(\mathrm{div}\,\mathbf q).
\end{equation}

\paragraph{Invariance by Piola transforms.} Let $\widehat{\boldsymbol\Pi}^{\mathrm{BDM}}$ be the BDM projection on the reference triangle. Then, using the formulas for change to the reference domain,
\[
(\widehat{\boldsymbol\Pi^{\mathrm{BDM}}\mathbf q},\widecheck{\mathbf r})_{\widehat K}=(\boldsymbol\Pi^{\mathrm{BDM}}\mathbf q,\mathbf r)_K=(\mathbf q,\mathbf r)_K=(\qq,\widecheck{\mathbf r})_{\widehat K} \qquad \forall\mathbf r\in \mathcal N_{k-2}(K),
\]
(see Proposition \ref{prop:5.1}(d)), and
\[
\langle\widehat{\boldsymbol\Pi^{\mathrm{BDM}}\mathbf q}\cdot\widehat{\mathbf n},\widehat\mu\rangle_{\partial\widehat K}=\langle\boldsymbol\Pi^{\mathrm{BDM}}{\mathbf q}\cdot\mathbf n,\mu\rangle_{\partial K}=\langle\mathbf q\cdot\mathbf n,\mu\rangle_{\partial K}=\langle \qq\cdot\widehat{\mathbf n},\widehat\mu\rangle_{\partial\widehat K}\qquad\forall \mu \in \mathcal R_k(\partial K),
\]
which proves that
\begin{equation}\label{eq:commBDM}
\widehat{\boldsymbol\Pi^{\mathrm{BDM}}\mathbf q}=\widehat{\boldsymbol\Pi}^{\mathrm{BDM}}\qq.
\end{equation}

\begin{proposition}[Estimates for the BDM projection]\label{prop:5.2}
On shape-regular triangulations and for sufficiently smooth $\mathbf q$,
\begin{itemize}
\item[{\rm (a)}]
$
\| \boldsymbol\Pi^{\mathrm{BDM}}\mathbf q\|_K \lesssim \|\mathbf q\|_K+ h_K|\mathbf q|_{1,K},
$
\item[{\rm (b)}]
$
\|\mathbf q-\boldsymbol\Pi^{\mathrm{BDM}}\mathbf q\|_K \lesssim h^{k+1}_K|\mathbf q|_{k+1,K},
$
\item[{\rm (c)}]
$
\|\mathrm{div}\,\mathbf q-\mathrm{div}\,\boldsymbol\Pi^{\mathrm{BDM}}\mathbf q\|_K\lesssim h^{k}_K|\mathrm{div}\,\mathbf q|_{k,K}.
$
\end{itemize}
\end{proposition}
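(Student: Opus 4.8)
The plan is to replay the proof of Proposition~\ref{prop:2.4} line for line, substituting the BDM ingredients for the RT ones; the only genuinely new work is to establish the two corresponding reference-element estimates, after which everything reduces to the scaling machinery already exercised twice. On $\widehat K$ I would first record the stability bound
\[
\|\widehat{\boldsymbol\Pi}^{\mathrm{BDM}}\qq\|_{\widehat K}\lesssim \|\qq\|_{\widehat K}+\|\qq\cdot\widehat{\mathbf n}\|_{\partial\widehat K}\lesssim \|\qq\|_{1,\widehat K}\qquad\forall\qq\in\mathbf H^1(\widehat K),
\]
which follows by expanding $\widehat{\boldsymbol\Pi}^{\mathrm{BDM}}\qq$ in a fixed basis of $\boldsymbol{\mathcal P}_k(\widehat K)$ and observing that the right-hand sides of \eqref{eq:BDM} are bounded linear functionals of $\qq$: the interior pairings against $\mathcal N_{k-2}(\widehat K)$ are controlled by $\|\qq\|_{\widehat K}$, while the boundary pairings against $\mathcal R_k(\partial\widehat K)$ are controlled by $\|\qq\cdot\widehat{\mathbf n}\|_{\partial\widehat K}$ and hence, by the trace theorem, by $\|\qq\|_{1,\widehat K}$. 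Second, since $\widehat{\boldsymbol\Pi}^{\mathrm{BDM}}$ fixes every element of $\boldsymbol{\mathcal P}_k(\widehat K)$ (the defining equations are trivially solved by a polynomial of degree $\le k$, and the solution is unique), the Bramble--Hilbert lemma gives
\[
\|\qq-\widehat{\boldsymbol\Pi}^{\mathrm{BDM}}\qq\|_{\widehat K}\lesssim|\qq|_{k+1,\widehat K}\qquad\forall\qq\in\mathbf H^{k+1}(\widehat K).
\]
These are the exact analogues of \eqref{eq:19}--\eqref{eq:20}.

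Parts (a) and (b) then follow verbatim. Using the Piola invariance \eqref{eq:commBDM} in place of \eqref{eq:18}, the chain
\[
\|\boldsymbol\Pi^{\mathrm{BDM}}\mathbf q\|_K\le|J_K|^{-1/2}\|\mathrm B_K\|\,\|\widehat{\boldsymbol\Pi}^{\mathrm{BDM}}\qq\|_{\widehat K}\lesssim|J_K|^{-1/2}\|\mathrm B_K\|\,\|\qq\|_{1,\widehat K}\lesssim\|\mathrm B_K\|\,\|\mathrm B_K^{-1}\|\big(\|\mathbf q\|_K+\|\mathrm B_K\|\,|\mathbf q|_{1,K}\big)
\]
closes (a) after inserting the shape-regularity bounds \eqref{eq:5}; running the same computation with the Bramble--Hilbert estimate in the middle inequality produces $\|\mathrm B_K^{-1}\|\,\|\mathrm B_K\|^{k+2}|\mathbf q|_{k+1,K}\lesssim h_K^{k+1}|\mathbf q|_{k+1,K}$, which is (b). Here I would use \eqref{eq:4} and \eqref{eq:7} for the two outer inequalities exactly as in Proposition~\ref{prop:2.4}.

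The point where BDM departs from RT is part (c). By the commutativity property \eqref{eq:53} one has $\mathrm{div}\,\mathbf q-\mathrm{div}\,\boldsymbol\Pi^{\mathrm{BDM}}\mathbf q=\mathrm{div}\,\mathbf q-\Pi_{k-1}\mathrm{div}\,\mathbf q$, so the relevant scalar $L^2$-projection lands in $\mathcal P_{k-1}(K)$ rather than $\mathcal P_k(K)$. Transporting to $\widehat K$ with \eqref{eq:6}, using that the hat transform commutes with the $L^2$-projection (the ``easy argument'' of Proposition~\ref{prop:2.4}, valid because $\mathrm F$ is affine), applying Bramble--Hilbert to $\widehat\Pi_{k-1}$ --- which fixes $\mathcal P_{k-1}(\widehat K)$ and therefore has approximation order $k$, giving a bound by $|\widehat{\mathrm{div}\,\mathbf q}|_{k,\widehat K}$ --- and scaling back through \eqref{eq:8} yields $h_K^{k}|\mathrm{div}\,\mathbf q|_{k,K}$. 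The main obstacle, such as it is, lies entirely in this bookkeeping: the loss of one power of $h$ relative to the RT estimate of Proposition~\ref{prop:2.4}(c) is precisely the loss of one polynomial degree in the divergence space, and one must keep the seminorm order ($k$, not $k+1$) and the matching power $h_K^{k}$ consistent throughout. Everything else is routine scaling.
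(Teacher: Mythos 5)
Your proposal is correct and follows essentially the same route as the paper's proof: reference-element stability and Bramble--Hilbert estimates for $\widehat{\boldsymbol\Pi}^{\mathrm{BDM}}$ (justified exactly as for RT in Section~\ref{sec:2.3}), transport via the Piola invariance \eqref{eq:commBDM} and the scaling bounds \eqref{eq:4}--\eqref{eq:8} for (a) and (b), and the commutativity property \eqref{eq:53} reducing (c) to the error of the scalar projection $\Pi_{k-1}$, with the correct loss of one power of $h_K$. The paper states these steps only by reference to Proposition~\ref{prop:2.4}; you have simply written out the same details explicitly, including the correct bookkeeping of the order-$k$ seminorm in part (c).
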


\begin{proof}
The proof is almost identical to that of Proposition \ref{prop:2.4}. We first need to show that
\[
\| \widehat{\boldsymbol\Pi}^{\mathrm{BDM}}\widehat{\mathbf q}\|_{\widehat K}\lesssim \|\widehat{\mathbf q}\|_{\widehat K}+\|\widehat{\mathbf q}\cdot\widehat{\mathbf n}\|_{\partial\widehat K}\lesssim \|\widehat{\mathbf q}\|_{1,\widehat K}\qquad \forall \widehat{\mathbf q}\in \mathbf H^1(\widehat K),
\]
and then use a scaling argument, taking advantage of \eqref{eq:commBDM}, to move to the reference element, to prove (a).
Since $\widehat\Pi^{\mathrm{BDM}}$ preserves the space $\boldsymbol{\mathcal P}_k$, then
\[
\|\qq-\widehat{\boldsymbol\Pi}^{\mathrm{BDM}}\qq\|_{\widehat K}\lesssim |\qq|_{k+1,\widehat K}\qquad \forall \qq\in\mathbf H^{k+1}(\widehat K).
\]
Another scaling argument proves then (b). (Note that the details of these scaling arguments are the same as in Proposition \ref{prop:2.4}.) Finally (c) follows from \eqref{eq:53}.
\end{proof}

\begin{proposition}[BDM local lifting of the normal trace]\label{prop:5.3}
For $k\ge 1$, there exists a linear operator $\mathbf L^{\mathrm{BDM}}:\mathcal R_k(\partial K)\to \boldsymbol{\mathcal P}_k(K)$ such that
\[
(\mathbf L^{\mathrm{BDM}}\mu)\cdot\mathbf n=\mu \quad \mbox{and}\quad \|\mathbf L^{\mathrm{BDM}}\mu\|_K \lesssim h_K^{1/2}\|\mu\|_{\partial K}\qquad \forall \mu \in \mathcal R_k(\partial K).
\]
\end{proposition}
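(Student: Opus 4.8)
The plan is to imitate the construction of the RT lifting in Proposition~\ref{prop:2.5}, replacing the pair $(\boldsymbol{\mathcal P}_{k-1}(\widehat K),\mathcal{RT}_k(\widehat K))$ by the pair $(\mathcal N_{k-2}(\widehat K),\boldsymbol{\mathcal P}_k(\widehat K))$, i.e.\ the internal test space and the local space of the BDM element. For given $\mu\in\mathcal R_k(\partial K)$, I would first define a field $\qq\in\boldsymbol{\mathcal P}_k(\widehat K)$ on the reference element as the solution of
\begin{subequations}
\begin{alignat}{4}
(\qq,\mathbf r)_{\widehat K} &=0 &\qquad &\forall \mathbf r\in \mathcal N_{k-2}(\widehat K),\\
\langle \qq\cdot\widehat{\mathbf n},\xi\rangle_{\partial\widehat K} &=\langle \widecheck\mu,\xi\rangle_{\partial\widehat K} & &\forall \xi\in \mathcal R_k(\partial\widehat K),
\end{alignat}
\end{subequations}
and then set $\mathbf L^{\mathrm{BDM}}\mu:=\mathbf q$, where $\mathbf q:=|J_K|^{-1}\mathrm B_K\qq\circ\mathrm G_K$. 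Since the primal Piola transform preserves $\boldsymbol{\mathcal P}_k$, we automatically have $\mathbf q\in \boldsymbol{\mathcal P}_k(K)$, and linearity of $\mu\mapsto\mathbf q$ is clear.

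The first thing to check is that the reference problem is well posed. It is a square system by Proposition~\ref{prop:5.1}(b) with indices shifted down by one, namely $\dimm \mathcal N_{k-2}(\widehat K)+\dimm \mathcal R_k(\partial\widehat K)=\dimm\boldsymbol{\mathcal P}_k(\widehat K)$, so uniqueness suffices. Taking $\widecheck\mu=0$ reduces the two equations to exactly the homogeneous BDM system \eqref{eq:100} on $\widehat K$: testing the second equation with $\xi=\qq\cdot\widehat{\mathbf n}\in\mathcal R_k(\partial\widehat K)$ (which is admissible because $\qq\in\boldsymbol{\mathcal P}_k$) forces $\qq\cdot\widehat{\mathbf n}=0$ on $\partial\widehat K$, and the argument already used in the definition of the BDM projection then gives $\qq=\mathbf 0$. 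Thus $\qq$, and hence $\mathbf q$, is uniquely and linearly determined by $\mu$.

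To verify the trace identity I would transport the boundary equation back to $K$ by the normal-trace change of variables \eqref{eq:2c} together with \eqref{eq:1c} and the duality rule \eqref{eq:0c}, exactly as in Proposition~\ref{prop:2.5}:
\[
\langle \mathbf q\cdot\mathbf n,\xi\rangle_{\partial K}=\langle \qq\cdot\widehat{\mathbf n},\widehat\xi\rangle_{\partial\widehat K}=\langle \widecheck\mu,\widehat\xi\rangle_{\partial\widehat K}=\langle \mu,\xi\rangle_{\partial K}\qquad\forall\xi\in\mathcal R_k(\partial K),
\]
whence $\mathbf q\cdot\mathbf n=\mu$. Finally, the norm estimate is obtained verbatim from the RT case: using the scaling bound \eqref{eq:4b}, a finite dimensional norm-equivalence argument on the fixed reference element to bound $\|\qq\|_{\widehat K}\lesssim\|\widecheck\mu\|_{\partial\widehat K}$, the scaling rule \eqref{eq:4c}, and the shape-regularity estimates \eqref{eq:5},
\[
\|\mathbf q\|_K\le |J_K|^{-1/2}\|\mathrm B_K\|\,\|\qq\|_{\widehat K}\lesssim |J_K|^{-1/2}\|\mathrm B_K\|\,\|a\|_{L^\infty}^{1/2}\|\mu\|_{\partial K}\lesssim h_K^{-d/2}h_K h_K^{(d-1)/2}\|\mu\|_{\partial K}=h_K^{1/2}\|\mu\|_{\partial K}.
\]
The only step requiring thought beyond copying Proposition~\ref{prop:2.5} is the well-posedness of the reference problem, and that reduces precisely to the uniqueness argument \eqref{eq:100} already established for the BDM projection; everything else is an index-shifted repetition of the RT proof.
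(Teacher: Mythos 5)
Your proof is correct and takes essentially the same route as the paper: the paper defines $\mathbf L^{\mathrm{BDM}}\mu$ through exactly the same reference-element system (internal tests in $\mathcal N_{k-2}(\widehat K)$, boundary tests in $\mathcal R_k(\partial\widehat K)$ against $\widecheck\mu$), pulls back by the Piola transform, and reuses the scaling argument of Proposition~\ref{prop:2.5} word for word. The well-posedness of the reference problem and the trace identity that you spell out are precisely the details the paper leaves implicit, and your dimension count via Proposition~\ref{prop:5.1}(b) and the uniqueness argument borrowed from the BDM projection are the right way to fill them in.
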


\begin{proof}
Let $\mathbf q=\mathbf L^{\mathrm{BDM}} \mu\in \boldsymbol{\mathcal P}_k(K)$ be defined as
$
\mathbf q=|J_K|^{-1}\mathrm B_K\qq\circ\mathrm G_K,
$
where $\qq\in \boldsymbol{\mathcal P}_k(\widehat K)$ is the solution of the discrete equations in the reference domain:
\begin{alignat*}{4}
(\qq,\mathbf r)_{\widehat K} &=0 &\qquad &\forall\mathbf r\in \mathcal N_{k-2}(\widehat K),\\
\langle\qq\cdot\widehat{\mathbf n},\xi\rangle_{\partial\widehat K}&=\langle\widecheck\mu,\xi\rangle_{\partial\widehat K} & & \forall \xi\in \mathcal R_k(\partial\widehat K).
\end{alignat*}
The remainder of the proof of Proposition \ref{prop:2.5} (essentially a scaling argument) can be used word by word to prove the result.
\end{proof}

\subsection{The BDM method}

\paragraph{Spaces and equations.} We start by redefining the discrete spaces
\[
\mathbf V_h:=\prod_{K\in \mathcal T_h}\boldsymbol{\mathcal P}_k(K), \qquad W_h:=\prod_{K\in \mathcal T_h} \mathcal P_{k-1}(K), \qquad M_h:=\prod_{e\in \mathcal E_h}\mathcal P_k(e),
\]
and similarly $M_h^\circ$ and $M_h^\Gamma$.

\begin{framed}
\noindent
We look for
\begin{subequations}\label{eq:BDMeq}
\begin{equation}
(\mathbf q_h,u_h,\widehat u_h)\in \mathbf V_h \times W_h \times M_h,
\end{equation}
satisfying
\begin{alignat}{6}
\label{eq:BDMeqb}
&(\kappa^{-1}\mathbf q_h,\mathbf r)_{\mathcal T_h}-(u_h,\mathrm{div}\,\mathbf r)_{\mathcal T_h}+\langle \widehat u_h,\mathbf r\cdot\mathbf n\rangle_{\partial\mathcal T_h} &&=0 &\qquad &\forall\mathbf r \in \mathbf V_h,\\
&(\mathrm{div}\,\mathbf q_h,w)_{\mathcal T_h}& &=(f,w)_{\mathcal T_h} & &\forall w\in W_h,\\
\label{eq:BDMeqd}
&\langle\mathbf q_h\cdot\mathbf n,\mu\rangle_{\partial\mathcal T_h\setminus\Gamma} &&=0 & &\forall\mu \in M_h^\circ,\\
\label{eq:BDMeqe}
&\langle \widehat u_h,\mu\rangle_\Gamma &&=\langle u_0,\mu\rangle_\Gamma & &\forall \mu \in M_h^\Gamma.
\end{alignat}
\end{subequations}
\end{framed}
\noindent
A reduced conforming formulation, involving $\mathbf q_h$ and $u_h$ only, can be obtained using $\mathbf V_h^{\mathrm{div}}=\mathbf V_h\cap \mathbf H(\mathrm{div},\Omega)$ as test space in \eqref{eq:BDMeqb} and noticing that \eqref{eq:BDMeqd} is equivalent to $\mathbf q_h\in \mathbf V_h^{\mathrm{div}}$.

\begin{proposition}\label{prop:5.4}
Equations \eqref{eq:BDMeq} have a unique solution.
\end{proposition}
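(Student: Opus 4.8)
The plan is to reproduce the uniqueness argument of Proposition~\ref{prop:3.1}(a) almost verbatim, changing only the two places where the RT structure entered: the local velocity space is now $\mathbf V_h|_K=\boldsymbol{\mathcal P}_k(K)$, and $u_h|_K\in\mathcal P_{k-1}(K)$ rather than $\mathcal P_k(K)$, so that $\nabla u_h$ lives one degree lower. Since the test functions in \eqref{eq:BDMeq} range over all of $\mathbf V_h$, $W_h$, $M_h^\circ$ and $M_h^\Gamma$, and $M_h=M_h^\circ\oplus M_h^\Gamma$, the system is square; hence it suffices to show that the homogeneous problem (with $f=0$ and $u_0=0$) has only the trivial solution. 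First I would test the four homogeneous equations with $(\mathbf q_h,u_h,-\widehat u_h,-\mathbf q_h\cdot\mathbf n)$ and add them, exactly as in Proposition~\ref{prop:3.1}(a): the symmetric coupling terms $\mp(u_h,\mathrm{div}\,\mathbf q_h)_{\mathcal T_h}$ and all the boundary contributions cancel, leaving $(\kappa^{-1}\mathbf q_h,\mathbf q_h)_{\mathcal T_h}=0$, so that $\mathbf q_h=\mathbf 0$ because $\kappa^{-1}$ is strictly positive.

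With $\mathbf q_h=\mathbf 0$ I would integrate by parts element by element in \eqref{eq:BDMeqb} to obtain, for each $K$,
\begin{equation*}
(\nabla u_h,\mathbf r)_K+\langle u_h-\widehat u_h,\mathbf r\cdot\mathbf n\rangle_{\partial K}=0\qquad\forall\mathbf r\in\boldsymbol{\mathcal P}_k(K).
\end{equation*}
To conclude that $u_h=\widehat u_h$ on $\partial K$ I would insert the function $\mathbf p\in\boldsymbol{\mathcal P}_k(K)$ characterized by $(\mathbf p,\mathbf r)_K=0$ for all $\mathbf r\in\mathcal N_{k-2}(K)$ and $\langle\mathbf p\cdot\mathbf n,\mu\rangle_{\partial K}=\langle u_h-\widehat u_h,\mu\rangle_{\partial K}$ for all $\mu\in\mathcal R_k(\partial K)$; this is exactly the local lifting of Proposition~\ref{prop:5.3}, and it is well defined since $u_h-\widehat u_h\in\mathcal R_k(\partial K)$ (because $u_h|_{\partial K}\in\mathcal R_{k-1}(\partial K)\subset\mathcal R_k(\partial K)$). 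Taking $\mathbf r=\mathbf p$ gives $\|u_h-\widehat u_h\|_{\partial K}^2=\pm(\nabla u_h,\mathbf p)_K$, and the right-hand side vanishes because $\nabla u_h\in\boldsymbol{\mathcal P}_{k-2}(K)\subset\mathcal N_{k-2}(K)$, so $u_h=\widehat u_h$ on $\partial K$.

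Once $u_h=\widehat u_h$ on $\partial K$, choosing $\mathbf r=\nabla u_h\in\boldsymbol{\mathcal P}_{k-2}(K)\subset\boldsymbol{\mathcal P}_k(K)$ in the localized identity annihilates the boundary term and yields $\|\nabla u_h\|_K^2=0$, so $u_h\equiv c_K$ on each $K$ and $\widehat u_h\equiv c_K$ on $\partial K$. Because $\widehat u_h$ is single valued across interior faces, the constants $c_K$ agree on neighbouring elements and, by connectedness of $\Omega$, $u_h\equiv c$ and $\widehat u_h\equiv c$ globally; feeding $\mu\equiv1$ on $\Gamma$ into the homogeneous \eqref{eq:BDMeqe} then forces $c=0$, finishing the proof. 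The only genuinely new point, and the step I expect to require care, is the vanishing of $(\nabla u_h,\mathbf p)_K$: it rests on the inclusion $\nabla\mathcal P_{k-1}(K)=\boldsymbol{\mathcal P}_{k-2}(K)\subset\mathcal N_{k-2}(K)$, which is precisely the compatibility between the reduced scalar space $W_h$ and the N\'ed\'elec interior degrees of freedom designed into the BDM element. This is the BDM analogue of the orthogonality against $\boldsymbol{\mathcal P}_{k-1}(K)$ used for RT, and every other ingredient transfers mechanically from Proposition~\ref{prop:3.1}(a).
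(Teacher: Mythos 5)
Your proof is correct and follows essentially the same route as the paper: reduce to uniqueness via the square system, test with $(\mathbf q_h,u_h,-\widehat u_h,-\mathbf q_h\cdot\mathbf n)$ to kill $\mathbf q_h$, then use the local BDM-type lifting $\mathbf p$ together with the inclusion $\nabla\mathcal P_{k-1}(K)\subset\boldsymbol{\mathcal P}_{k-2}(K)\subset\mathcal N_{k-2}(K)$ to force $u_h=\widehat u_h$ on $\partial K$, and conclude by the constant-gluing argument. You even identified the same single genuinely new ingredient (the compatibility of the reduced scalar space with the N\'ed\'elec interior tests) that the paper highlights.
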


\begin{proof} (This proof is a simple adaptation of the proof of Proposition \ref{prop:3.1}.)
Since $M_h\equiv M_h^\circ\oplus M_h^\Gamma$, we only need to take care of uniqueness of solution. Let then $(\mathbf q_h,u_h,\widehat u_h)\in \mathbf V_h\times W_h\times M_h$ be a solution of 
\begin{subequations}
\begin{alignat}{6}
\label{eq:55a}
&(\kappa^{-1}\mathbf q_h,\mathbf r)_{\mathcal T_h}-(u_h,\mathrm{div}\,\mathbf r)_{\mathcal T_h}+\langle \widehat u_h,\mathbf r\cdot\mathbf n\rangle_{\partial\mathcal T_h} &&=0 &\qquad &\forall\mathbf r \in \mathbf V_h,\\
\label{eq:55b}
&(\mathrm{div}\,\mathbf q_h,w)_{\mathcal T_h}& &=0 & &\forall w\in W_h,\\
\label{eq:55c}
&\langle\mathbf q_h\cdot\mathbf n,\mu\rangle_{\partial\mathcal T_h\setminus\Gamma} &&=0 & &\forall\mu \in M_h^\circ,\\
\label{eq:55d}
&\langle \widehat u_h,\mu\rangle_\Gamma &&=0 & &\forall \mu \in M_h^\Gamma.
\end{alignat}
\end{subequations}
Testing these equations with $(\mathbf q_h,u_h,-\widehat u_h,-\mathbf q_h\cdot\mathbf n)$ and adding the results, we show that $(\kappa^{-1}\mathbf q_h,\mathbf q_h)_{\mathcal T_h}=0$ and hence $\mathbf q_h=\mathbf 0$. Let us now go back to \eqref{eq:55a}, which after integration by parts and localization on a single element yields for all $K\in \mathcal T_h$
\begin{equation}\label{eq:56}
(\nabla u_h,\mathbf r)_K+\langle u_h-\widehat u_h,\mathbf r\cdot\mathbf n\rangle_{\partial K}=0\qquad \forall \mathbf r\in \boldsymbol{\mathcal P}_k(K).
\end{equation}
Let us now construct $\mathbf p\in \boldsymbol{\mathcal P}_k(K)$ satisfying (see \eqref{eq:BDM} and Proposition \ref{prop:5.3}):
\begin{subequations}
\begin{alignat}{4}
(\mathbf p,\mathbf r)_K&=0 &\qquad &\forall \mathbf r\in \mathcal N_{k-2}(K),\\
\label{eq:58b}
\langle \mathbf p\cdot\mathbf n,\mu\rangle_{\partial K}&=\langle u_h-\widehat u_h,\mu\rangle_{\partial K} &&\forall \mu \in \mathcal R_k(\partial K).
\end{alignat}
\end{subequations}
Using this function as the test function in \eqref{eq:56}, we prove that
\begin{equation}\label{eq:59}
0 =(\nabla u_h,\mathbf p)_K +\langle u_h-\widehat u_h,\mathbf p\cdot\mathbf n\rangle_{\partial K}=\langle u_h-\widehat u_h,u_h-\widehat u_h\rangle_{\partial K},
\end{equation}
where we have used $\mu=u_h-\widehat u_h\in \mathcal R_k(\partial K)$ in \eqref{eq:58b} and that $\nabla u_h\in \nabla \mathcal P_{k-1}(K)\subset \boldsymbol{\mathcal P}_{k-2}(K)\subset \mathcal N_{k-2}(K)$. 

(From here on, everything is just a line-by-line copy of the end of the proof of Proposition \ref{prop:3.1}(a), that is, uniqueness of solution of the RT equations.)
The equality \eqref{eq:59} implies that $u_h-\widehat u_h=0$ on $\partial K$ and \eqref{eq:56} shows then (take $\mathbf r=\nabla u_h$) that $u_h\equiv c_K$ in $K$ and $\widehat u_h=u_h\equiv c_K$ on $\partial K$. Since each interior face value of $\widehat u_h$ is reached from different elements, it is easy to see that we have proved that $u_h\equiv c$ and $\widehat u_h\equiv c$. However, equation \eqref{eq:55d} implies that $\widehat u_h=0$ on $\Gamma$, and the proof of uniqueness of solution of \eqref{eq:BDMeq} is thus finished.
\end{proof}

\subsection{Error analysis}\label{sec:5.4}

\paragraph{Energy estimates.} We start by redefining the local projections: we take $\boldsymbol\Pi\mathbf q$ to be the local BDM projection, $\Pi u$ to be the projection on $W_h$ ($\Pi u|_K:=\Pi_{k-1} u$) and $\mathrm P u$ to be (again) the orthogonal projection onto $M_h$. The discrete errors are the same quantities that we defined in \eqref{eq:37}
\[
\boldsymbol\varepsilon_h^q:=\boldsymbol\Pi \mathbf q-\mathbf q_h\in \mathbf V_h, \qquad \varepsilon_h^u:=\Pi u-u_h\in W_h,\qquad \widehat\varepsilon_h^u:=\mathrm Pu-\widehat u_h\in M_h,
\]
and the {\bf error equations} differ from those in \eqref{eq:38}
\begin{alignat*}{6}
&(\kappa^{-1}\boldsymbol\varepsilon_h^q,\mathbf r)_{\mathcal T_h}-(\varepsilon_h^u,\mathrm{div}\,\mathbf r)_{\mathcal T_h}+\langle \widehat\varepsilon_h^u,\mathbf r\cdot\mathbf n\rangle_{\partial\mathcal T_h} &&=(\kappa^{-1}(\boldsymbol\Pi\mathbf q-\mathbf q),\mathbf r)_{\mathcal T_h} &\qquad &\forall\mathbf r \in \mathbf V_h,\\
&(\mathrm{div}\,\boldsymbol\varepsilon_h^q,w)_{\mathcal T_h}& &=0 & &\forall w\in W_h,\\
&\langle\boldsymbol\varepsilon_h^q\cdot\mathbf n,\mu\rangle_{\partial\mathcal T_h\setminus\Gamma} &&=0 & &\forall\mu \in M_h^\circ,\\
&\langle  \widehat\varepsilon_h^u,\mu\rangle_\Gamma &&=0 & &\forall \mu \in M_h^\Gamma,
\end{alignat*}
only in the fact that the spaces and projections have been redefined. The energy estimate \eqref{eq:40}
\begin{equation}\label{eq:60}
\|\boldsymbol\Pi\mathbf q-\mathbf q_h\|_{\kappa^{-1}}=\|\boldsymbol\varepsilon_h^q\|_{\kappa^{-1}}\le \| \boldsymbol\Pi \mathbf q-\mathbf q\|_{\kappa^{-1}}
\end{equation}
is proved in exactly the same way.

\paragraph{A note on the energy estimate.} For purely diffusive problems, the estimate \eqref{eq:60}, together with the approximation properties of the BDM projection (especifically Proposition \ref{prop:5.2}(b)) yields  optimal convergence $\|\mathbf q-\mathbf q_h\|_\Omega \lesssim h^{k+1}$. However, for problems with a reaction term
\[
\mathrm{div}\,\mathbf q+c\, u=f,
\]
the same comments we made in Section \ref{sec:4.5} still hold, and we can prove again
\begin{equation}\label{eq:61}
\|\boldsymbol\varepsilon_h^q\|_{\kappa^{-1}}^2+|\varepsilon_h^u|_c^2\le \|\boldsymbol\Pi\mathbf q-\mathbf q\|_{\kappa^{-1}}^2+|\Pi u-u|_c^2.
\end{equation}
This estimate now implies that $\|\mathbf q-\mathbf q_h\|_\Omega \lesssim h^k$, due to the influence of the lower order polynomial degree of the space $W_h$.

\paragraph{More estimates.} Using
\[
\mathrm{div}\,\boldsymbol\xi=\varepsilon_h^u, \qquad \|\boldsymbol\xi\|_{1,\Omega}\le C \|\varepsilon_h^u\|_\Omega
\]
we can repeat the arguments of Section \ref{sec:3.3} (with the BDM projection now, and taking advantage again of the commutativity property), to reproof \eqref{eq:44}
\begin{equation}\label{eq:62}
\| \Pi u-u_h\|_\Omega=\|\varepsilon_h^u\|_\Omega \lesssim \| \mathbf q-\mathbf q_h\|_{\kappa^{-1}}.
\end{equation}
Taking 
\[
\mathbf r\in \boldsymbol{\mathcal P}_k(K)\qquad \mathbf r\cdot\mathbf n=\widehat\varepsilon_h^u, \qquad \|\mathbf r\|_K \lesssim  h_K^{1/2}\|\widehat\varepsilon_h^u\|_{\partial K}
\]
we can also prove \eqref{eq:47} for BDM
\begin{equation}\label{eq:63}
\|\mathrm P u-\widehat u_h\|_h=
\|\widehat\varepsilon_h^u\|_h \lesssim \|\varepsilon_h^u\|_\Omega + h \|\mathbf q-\mathbf q_h\|_{\kappa^{-1}}.
\end{equation}

\paragraph{Duality estimates.}
Once again, we consider the dual problem
\begin{alignat*}{4}
\kappa^{-1} \boldsymbol\xi -\nabla \theta &=0 &\qquad &\mbox{in $\Omega$},\\
\mathrm{div}\,\boldsymbol\xi &=\varepsilon_h^u &&\mbox{in $\Omega$},\\
\theta &=0 & &\mbox{on $\Gamma$},
\end{alignat*}
and  assume the following {\bf regularity hypothesis}: there exists $C$ such that
\[
\| \boldsymbol\xi\|_{1,\Omega}+ \|\theta\|_{2,\Omega}\le C \|\varepsilon_h^u\|_\Omega.
\]
The arguments of Section \ref{sec:3.4} can be repeated and
\[
\|\varepsilon_h^u\|_\Omega \lesssim h\, (  \|\mathbf q_h-\mathbf q\|_{\kappa^{-1}}  + \| f-\Pi f\|_\Omega),
\]
follows with the same proof. For $k\ge 2$, we can do slighly better when we bound
\[
|(f-\Pi f,\theta-\Pi \theta)_\Omega|\lesssim h^2 \|f-\Pi f\|_\Omega |\theta|_{2,\Omega}\lesssim h^2 \|f-\Pi f\|_\Omega \|\varepsilon_h^u\|_\Omega.
\]
(This estimate does not work for $k=1$, since then $\Pi$ is the projection on piecewise constants and cannot deliver the $h^2$ term.)
The general case can then be presented as
\begin{equation}\label{eq:64}
\|\varepsilon_h^u\|_\Omega \lesssim h\,   \|\mathbf q_h-\mathbf q\|_{\kappa^{-1}}  + h^{\min\{k,2\}}\| f-\Pi f\|_\Omega.
\end{equation}

\paragraph{Optimal convergence.} Approximation of the projections used in the projection-based analysis can be summarized (for smoothest solutions) as
\[
\|\boldsymbol\Pi\mathbf q-\mathbf q\|_\Omega \lesssim h^{k+1}, \qquad \|\Pi u-u\|_\Omega\lesssim h^k, \qquad \| \mathrm P u-u\|_h\lesssim h^{k+1}.
\]
With everything in our favor, the BDM equations \eqref{eq:BDM} provide the following error estimates
\begin{framed}
\begin{alignat*}{6}
\|\mathbf q-\mathbf q_h\|_\Omega &\lesssim h^{k+1}  & \|\boldsymbol\Pi\mathbf q-\mathbf q_h\|_\Omega&\lesssim h^{k+1} &\qquad & \mbox{(see \eqref{eq:60})}\\
\| u-u_h\|_\Omega &\lesssim h^{k} &\| \Pi u-u_h\|_\Omega &\lesssim h^{k+\min\{k,2\}} &\qquad &\mbox{(see \eqref{eq:64})}\\
\| u-\widehat u_h\|_h &\lesssim h^{k+1} & \|\mathrm P u-\widehat u_h\|_h &\lesssim h^{k+\min\{k,2\}} && \mbox{(see \eqref{eq:64} and \eqref{eq:63})}\\
\|\mathbf q\cdot\mathbf n-\mathbf q_h\cdot\mathbf n\|_h &\lesssim h^{k+1} \qquad & \|\boldsymbol\Pi\mathbf q\cdot\mathbf n-\mathbf q_h\cdot\mathbf n\|_h &\lesssim h^{k+1}
\end{alignat*}
\end{framed}
\noindent
Let it be noted that when there is a reaction term in the equation, convergence for $\mathbf q_h$ is subject to the estimate \eqref{eq:61} and reduced to $h^k$. This bound is dragged to all the superconvergence estimates.

\section{The Hybridizable Discontinuous Galerkin method}

In this section we show how the spaces of RT and BDM can be balanced to have equal polynomial degree. Stability will be restored using a discrete stabilization (not penalization) function.  This is how local quantities of RT, BDM, and HDG methods compare. Note that there is no natural finite element structure for $\mathbf q_h$, where we can recognize boundary and integral d.o.f. Instead, we will have a projection that integrates $(\mathbf q_h,u_h)$ in the same structure.
\[
\begin{tabular}{|c|c|c|c|c|}
\hline 
degree$\phantom{\Big|}$ & $\mathbf q_h$ & $u_h$ & boundary d.o.f. & internal d.o.f.\\
\hline
$k\ge 0\phantom{\Big|}$ & $\mathcal{RT}_k(K)=\boldsymbol{\mathcal P}_k(K)+\mathbf m\tildeP_k(K)$ &  $\mathcal P_k(K)$& $\mathcal R_k(\partial K)$& $\boldsymbol{\mathcal P}_{k-1}(K)$\\
$k\ge 1\phantom{\Big|}$ & $\boldsymbol{\mathcal P}_k(K)$ & $\mathcal P_{k-1}(K)$ & $\mathcal R_k(\partial K)$ & $\mathcal N_{k-2}(K)$\\
$k\ge 0 \phantom{\Big|}$ & $\boldsymbol{\mathcal P}_k(K)$ & $\mathcal P_k(K)$ & N.A. & N.A.
\\
\hline
\end{tabular}
\]
Let us start with some small talk. The Hybridizable Discontinuous Galerkin method can be understood as a further development of the Local Discontinuous Galerkin method, one of the many DG schemes covered in the celebrated {\em framework-style} paper of Arnold, Brezzi, Cockburn and Marini \cite{ArBrCoMa:2001}. While the trail of papers is not entirely obvious, premonitions of what was about to happen can be found in the treatment of hybridized mixed methods by Bernardo Cockburn and Jay Gopalakrishnan \cite{CoGo:2004}. Some time later, this fructified in another long framework-style paper of the previous authors and Raytcho Lazarov \cite{CoGoLa:2009}, setting the bases for a full development of HDG methods. Cockburn and collaborators have been pushing the limits of applicability of HDG ideas to many problems in continuum mechanics and physics. The analysis, as will be shown here, is based on a particular definition of a projection tailored to the HDG equations: its first occurrence was due to Cockburn, Gopalakrishnan and myself in \cite{CoGoSa:2010}.

\subsection{The HDG method}

For $k\ge 0$, consider the spaces
\[
\mathbf V_h:=\prod_{K\in \mathcal T_h}\boldsymbol{\mathcal P}_k(K), \qquad W_h:=\prod_{K\in \mathcal T_h}\mathcal P_k(K), \qquad M_h:=\prod_{e\in \mathcal E_h}\mathcal P_k(e),
\]
and the subspace decomposition $M_h=M_h^\circ \oplus M_h^\Gamma$. Consider also the {\bf stabilization function}
\[
\tau \in \prod_{K\in \mathcal T_h}\mathcal R_0(\partial K), \qquad 
\tau \ge 0\, \qquad \tau|_{\partial K}\neq 0 \quad \forall K.
\]
\begin{framed}
\noindent
We look for
\begin{subequations}\label{eq:HDGeq}
\begin{equation}
(\mathbf q_h,u_h,\widehat u_h)\in \mathbf V_h \times W_h \times M_h,
\end{equation}
satisfying
\begin{alignat}{6}
\label{eq:HDGeqb}
&(\kappa^{-1}\mathbf q_h,\mathbf r)_{\mathcal T_h}-(u_h,\mathrm{div}\,\mathbf r)_{\mathcal T_h}+\langle \widehat u_h,\mathbf r\cdot\mathbf n\rangle_{\partial\mathcal T_h} &&=0 &\qquad &\forall\mathbf r \in \mathbf V_h,\\
\label{eq:HDGeqc}
&(\mathrm{div}\,\mathbf q_h,w)_{\mathcal T_h}+\langle \tau(u_h-\widehat u_h),w\rangle_{\partial \mathcal T_h}& &=(f,w)_{\mathcal T_h} & &\forall w\in W_h,\\
\label{eq:HDGeqd}
&\langle\mathbf q_h\cdot\mathbf n+\tau(u_h-\widehat u_h),\mu\rangle_{\partial\mathcal T_h\setminus\Gamma} &&=0 & &\forall\mu \in M_h^\circ,\\
\label{eq:HDGeqe}
&\langle \widehat u_h,\mu\rangle_\Gamma &&=\langle u_0,\mu\rangle_\Gamma & &\forall \mu \in M_h^\Gamma.
\end{alignat}
\end{subequations}
\end{framed}

\paragraph{Some comments.} Equations \eqref{eq:HDGeqb} and \eqref{eq:HDGeqc} are local, given the fact that the spaces are discontinuous. The first of them is the same equation (with different spaces) as in the RT and BDM method. If $\tau$ were to be zero (this is not allowed in our choice of spaces), equation \eqref{eq:HDGeqc} would be the same equation that we had in RT and BDM. Note that, after integration by parts, we can also write \eqref{eq:HDGeqc} as
\[
-(\mathbf q_h,\nabla w)_{\mathcal T_h}+\langle \mathbf q_h\cdot\mathbf n+\tau(u_h-\widehat u_h),w\rangle_{\partial\mathcal T_h}=(f,w)_{\mathcal T_h}\qquad \forall w\in W_h,
\]
where the numerical flux
\begin{equation}\label{eq:66}
\widehat{\mathbf q}_h\cdot\mathbf n:=\mathbf q_h\cdot\mathbf n+\tau (u_h-\widehat u_h)\in \mathcal R_k(\partial K)\qquad \forall K,
\end{equation}
makes an appearance. Equation \eqref{eq:HDGeqd} imposes that this numerical flux is `single-valued' on all internal faces (actually, normal components cancel each other), so that the numerical flux $\widehat{\mathbf q}_h\cdot\mathbf n$ can be identified with an element of $M_h$.

\begin{proposition}
Equations \eqref{eq:HDGeq} have a unique solution.
\end{proposition}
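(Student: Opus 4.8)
The plan is to observe first that \eqref{eq:HDGeq} is a square linear system: since $M_h=M_h^\circ\oplus M_h^\Gamma$, the test spaces in \eqref{eq:HDGeqb}--\eqref{eq:HDGeqe} impose exactly $\dimm\mathbf V_h+\dimm W_h+\dimm M_h$ conditions, matching the number of components of $(\mathbf q_h,u_h,\widehat u_h)$. Hence it suffices to prove that the homogeneous problem (with $f=0$ and $u_0=0$) admits only the trivial solution, exactly as in Propositions \ref{prop:3.1}(a) and \ref{prop:5.4}.

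First I would carry out the energy argument. The homogeneous version of \eqref{eq:HDGeqe} forces $\widehat u_h=0$ on $\Gamma$, so $\widehat u_h\in M_h^\circ$ is admissible in \eqref{eq:HDGeqd}. I would test \eqref{eq:HDGeqb} with $\mathbf r=\mathbf q_h$, \eqref{eq:HDGeqc} with $w=u_h$, and \eqref{eq:HDGeqd} with $\mu=\widehat u_h$, using $\widehat u_h|_\Gamma=0$ to replace $\partial\mathcal T_h\setminus\Gamma$ by $\partial\mathcal T_h$. Adding the first two and subtracting the third cancels the terms $(u_h,\mathrm{div}\,\mathbf q_h)_{\mathcal T_h}$ and $\langle\widehat u_h,\mathbf q_h\cdot\mathbf n\rangle_{\partial\mathcal T_h}$, and I expect to reach the identity
\[
(\kappa^{-1}\mathbf q_h,\mathbf q_h)_{\mathcal T_h}+\langle \tau(u_h-\widehat u_h),u_h-\widehat u_h\rangle_{\partial\mathcal T_h}=0 .
\]
As $\kappa^{-1}>0$ and $\tau\ge 0$, both summands vanish, yielding $\mathbf q_h=\mathbf 0$ and $u_h=\widehat u_h$ on every face $e$ where $\tau|_e>0$.

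The hard part is that, in contrast with RT and BDM, the stabilization only delivers $u_h=\widehat u_h$ on the faces carrying positive $\tau$, and the normal-trace lifting trick used in Propositions \ref{prop:3.1} and \ref{prop:5.4} is unavailable: by Lemma \ref{lemma:2.2} the normal traces of $\boldsymbol{\mathcal P}_k^\bot(K)$ span only one of the two orthogonal summands of $\mathcal R_k(\partial K)$. To circumvent this I would set $\mathbf q_h=\mathbf 0$ in \eqref{eq:HDGeqb}, integrate by parts elementwise to get
\[
(\nabla u_h,\mathbf r)_K=\langle u_h-\widehat u_h,\mathbf r\cdot\mathbf n\rangle_{\partial K}\qquad\forall\mathbf r\in\boldsymbol{\mathcal P}_k(K),
\]
and then restrict to $\mathbf r\in\boldsymbol{\mathcal P}_k^\bot(K)$. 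Since $\nabla u_h\in\boldsymbol{\mathcal P}_{k-1}(K)$ is orthogonal to $\boldsymbol{\mathcal P}_k^\bot(K)$, the left-hand side drops out, leaving $\langle u_h-\widehat u_h,\mathbf r\cdot\mathbf n\rangle_{\partial K}=0$ for all such $\mathbf r$. By the orthogonal decomposition of Lemma \ref{lemma:2.2}, this forces $(u_h-\widehat u_h)|_{\partial K}=v|_{\partial K}$ for a unique $v\in\mathcal P_k^\bot(K)$. Because $u_h-\widehat u_h$ vanishes on at least one face (the one with $\tau>0$, which exists as $\tau|_{\partial K}\neq 0$), Lemma \ref{lemma:2.1}(a) gives $v=\mathbf 0$, i.e. $u_h=\widehat u_h$ on the whole of $\partial K$.

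With $u_h=\widehat u_h$ everywhere on $\partial K$, the displayed elementwise identity taken with $\mathbf r=\nabla u_h$ gives $\|\nabla u_h\|_K^2=0$, so $u_h$ is constant on each element and $\widehat u_h$ equals that constant on $\partial K$. Finally, single-valuedness of $\widehat u_h$ across interior faces propagates one global constant over the connected mesh, just as in the closing lines of the proof of Proposition \ref{prop:3.1}(a), and the homogeneous condition \eqref{eq:HDGeqe} pins this constant to zero. Thus $\mathbf q_h=\mathbf 0$, $u_h=0$ and $\widehat u_h=0$, which proves uniqueness and hence unique solvability.
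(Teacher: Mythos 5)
Your proof is correct and follows essentially the same route as the paper's: the energy identity obtained by testing with $(\mathbf q_h,u_h,\widehat u_h)$ gives $\mathbf q_h=\mathbf 0$ and $\tau(u_h-\widehat u_h)=0$, and then the combination of the orthogonal decomposition of Lemma \ref{lemma:2.2} (applied to $\mathbf r\in\boldsymbol{\mathcal P}_k^\bot(K)$ in the integrated-by-parts first equation) with Lemma \ref{lemma:2.1}(a) on the face where $\tau>0$ forces $u_h=\widehat u_h$ on $\partial K$, after which the constant-propagation argument of Proposition \ref{prop:3.1} finishes the proof exactly as in the paper. The only differences are cosmetic: you make the square-system dimension count explicit and use \eqref{eq:HDGeqe} up front to conclude $\widehat u_h|_\Gamma=0$ instead of including a fourth test function, both of which are equivalent to what the paper does.
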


\begin{proof}
(What follows is a slight adaptation of the proofs of Propositions \ref{prop:3.1}(a) (RT) and \ref{prop:5.4} (BDM) to the HDG equations.) We only need to show that any solution $(\mathbf q_h,u_h,\widehat u_h)\in \mathbf V_h\times W_h\times M_h$ of the homogeneous equations
\begin{alignat*}{6}
&(\kappa^{-1}\mathbf q_h,\mathbf r)_{\mathcal T_h}-(u_h,\mathrm{div}\,\mathbf r)_{\mathcal T_h}+\langle \widehat u_h,\mathbf r\cdot\mathbf n\rangle_{\partial\mathcal T_h} &&=0 &\qquad &\forall\mathbf r \in \mathbf V_h,\\
&(\mathrm{div}\,\mathbf q_h,w)_{\mathcal T_h}+\langle \tau(u_h-\widehat u_h),w\rangle_{\partial \mathcal T_h}& &=0 & &\forall w\in W_h,\\
&\langle\mathbf q_h\cdot\mathbf n+\tau(u_h-\widehat u_h),\mu\rangle_{\partial\mathcal T_h\setminus\Gamma} &&=0 & &\forall\mu \in M_h^\circ,\\
&\langle \widehat u_h,\mu\rangle_\Gamma &&=0 & &\forall \mu \in M_h^\Gamma,
\end{alignat*}
vanishes. Testing these equations with $(\mathbf q_h,u_h,-\widehat u_h,-\mathbf q_h\cdot\mathbf n-\tau(u_h-\widehat u_h))$ and adding the results, we easily prove that
\[
(\kappa^{-1}\mathbf q_h,\mathbf q_h)_{\mathcal T_h}+\langle \tau(u_h-\widehat u_h),u_h-\widehat u_h\rangle_{\partial\mathcal T_h}=0
\]
and therefore $\mathbf q_h=\mathbf 0$, $\tau(u_h-\widehat u_h)=0$ (we have used that $\tau\ge 0$) and
\begin{equation}\label{eq:67}
(\nabla u_h,\mathbf r)_K+\langle u_h-\widehat u_h,\mathbf r\cdot\mathbf n\rangle_{\partial K}=0\quad \forall \mathbf r\in \boldsymbol{\mathcal P}_k(K)\quad \forall K.
\end{equation}
In particular, we have
\[
\langle u_h-\widehat u_h,\mathbf r\cdot\mathbf n\rangle_{\partial K}=0\quad \forall \mathbf r\in \boldsymbol{\mathcal P}_k^\bot(K).
\]
This implies (Lemma \ref{lemma:2.2}) that $u_h-\widehat u_h=v$ on $\partial K$, where $v\in \mathcal P_k^\bot(K)$. However, since $\tau(u_h-\widehat u_h)=0$ and $\tau$ is at least positive in one face of $K$, then (by Lemma \ref{lemma:2.1}(a)) necessarily $v=0$ and thus $u_h-\widehat u_h=0$ on $\partial K$. Testing then \eqref{eq:67} with $\mathbf r=\nabla u_h$, we show that $u_h\equiv c_K$ on $K$ and $u_h=\widehat u_h\equiv c_K$ on $\partial K$. Proceeding as in the proof of Proposition \ref{prop:3.1}, we show that $u_h=0$ and $\widehat u_h=0$.
\end{proof}

\subsection{The HDG projection}

The analysis of the HDG method will follow the same pattern we have employed in the analysis of RT and BDM. We start by defining a tailored projection onto the discrete spaces that will be used to write error equations that mimic those of the hybridizable mixed methods. As opposed to the two separate projections for $\mathbf q$ and $u$ that were used in RT and BDM, here the projection will be defined for the pair $(\mathbf q,u)$. However, we will still denote $(\Pi^{\mathrm{HDG}} \mathbf q,\Pi^{\mathrm{HDG}} u)$, as if these projections were defined separately: correct, but cumbersome, notation would express these elements as components of a single operator.

\begin{framed}
\noindent{\bf The HDG projection.} Given sufficiently smooth $(\mathbf q,u):K\to \mathbb R^d\setminus\mathbb R$, we define
\[
(\boldsymbol\Pi^{\mathrm{HDG}}\mathbf q,\Pi^{\mathrm{HDG}}u):=(\boldsymbol\Pi^{\mathrm{HDG}}_q(\mathbf q,u),\Pi^{\mathrm{HDG}}_u(\mathbf q,u))\in \boldsymbol{\mathcal P}_k(K)\times \mathcal P_k(K)
\]
as the solution to the equations
\begin{subequations}\label{eq:HDG}
\begin{alignat}{4}
(\boldsymbol\Pi^{\mathrm{HDG}}\mathbf q,\mathbf r)_K &=(\mathbf q,\mathbf r)_K &\qquad &\forall \mathbf r\in \boldsymbol{\mathcal P}_{k-1}(K),\\
\label{eq:HDGb}
(\Pi^{\mathrm{HDG}} u,v)_K &=(u,v)_K & & \forall v\in \mathcal P_{k-1}(K),\\
\label{eq:HDGc}
\langle\boldsymbol\Pi^{\mathrm{HDG}}\mathbf q\cdot\mathbf n+\tau \Pi^{\mathrm{HDG}}u,\mu\rangle_{\partial K}&=\langle\mathbf q\cdot\mathbf n+\tau\, u,\mu\rangle_{\partial K} & &\forall\mu \in \mathcal R_{k}(K).
\end{alignat} 
\end{subequations}
\end{framed}

\begin{proposition}[Definition of the HDG projection]
Equations \eqref{eq:HDG} are uniquely solvable.
\end{proposition}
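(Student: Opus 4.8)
The plan is to argue exactly as in the unique-solvability proofs for the RT and BDM projections: first observe that \eqref{eq:HDG} is a \emph{square} linear system, so that it suffices to prove that the homogeneous problem has only the trivial solution. For the dimension count, the unknown $(\boldsymbol\Pi^{\mathrm{HDG}}\mathbf q,\Pi^{\mathrm{HDG}}u)$ lives in $\boldsymbol{\mathcal P}_k(K)\times\mathcal P_k(K)$, of dimension $(d+1){k+d\choose d}$, while the three groups of equations contribute $\dimm\boldsymbol{\mathcal P}_{k-1}(K)+\dimm\mathcal P_{k-1}(K)+\dimm\mathcal R_k(\partial K)=(d+1){k+d-1\choose d}+(d+1){k+d-1\choose d-1}$. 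A single application of Pascal's rule collapses this to $(d+1){k+d\choose d}$, matching the dimension of the trial space (the cases $k=0$, where the first two equations are void, are included automatically).

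Next I would set the data to zero and let $(\mathbf q,u)\in\boldsymbol{\mathcal P}_k(K)\times\mathcal P_k(K)$ solve the homogeneous equations. The first two then say exactly that $\mathbf q\in\boldsymbol{\mathcal P}_k^\bot(K)$ and $u\in\mathcal P_k^\bot(K)$. The crux is the third equation. Since $u|_{\partial K}\in\mathcal R_k(\partial K)$ is an admissible test function, I would choose $\mu=u|_{\partial K}$ and integrate by parts in the resulting boundary term, $\langle\mathbf q\cdot\mathbf n,u\rangle_{\partial K}=(\mathrm{div}\,\mathbf q,u)_K+(\mathbf q,\nabla u)_K$. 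Here $\mathrm{div}\,\mathbf q\in\mathcal P_{k-1}(K)$ is orthogonal to $u\in\mathcal P_k^\bot(K)$, and $\nabla u\in\boldsymbol{\mathcal P}_{k-1}(K)$ is orthogonal to $\mathbf q\in\boldsymbol{\mathcal P}_k^\bot(K)$, so both terms vanish. The homogeneous third equation therefore reduces to $\langle\tau u,u\rangle_{\partial K}=0$, and since $\tau\ge0$ this forces $\tau u=0$ on $\partial K$.

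Finally I would harvest the two polynomials separately. Feeding $\tau u=0$ back into the third equation leaves $\langle\mathbf q\cdot\mathbf n,\mu\rangle_{\partial K}=0$ for every $\mu\in\mathcal R_k(\partial K)$; taking $\mu=\mathbf q\cdot\mathbf n$ (admissible since $\mathbf q\in\boldsymbol{\mathcal P}_k(K)$) gives $\mathbf q\cdot\mathbf n=0$ on $\partial K$, whence $\mathbf q=\mathbf 0$ by Lemma \ref{lemma:2.1}(b). For $u$, the hypothesis $\tau|_{\partial K}\neq0$ (with $\tau$ face-wise constant and nonnegative) guarantees a face $e$ with $\tau|_e>0$, and there $\tau u=0$ yields $u|_e=0$; Lemma \ref{lemma:2.1}(a) then propagates this to $u=0$ on all of $K$. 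I expect the only real subtlety to be this last step: because $\tau$ is allowed to vanish on some faces, the identity $\tau u=0$ by itself controls $u$ only where $\tau>0$, so it is essential to combine the minimal positivity assumption $\tau|_{\partial K}\neq0$ with the rigidity of $\mathcal P_k^\bot(K)$ encoded in Lemma \ref{lemma:2.1}(a).
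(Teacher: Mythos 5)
Your proposal is correct and follows essentially the same path as the paper's proof: a dimension count reducing the problem to uniqueness, then testing the boundary equation with $u|_{\partial K}$ to extract $\tau u=0$ (using $\tau\ge 0$), testing again with $\mathbf q\cdot\mathbf n$ to get $\mathbf q\cdot\mathbf n=0$, and finishing with Lemma \ref{lemma:2.1}(b) for $\mathbf q$ and Lemma \ref{lemma:2.1}(a) for $u$ via the face where $\tau>0$. The only cosmetic difference is that where the paper cites Lemma \ref{lemma:2.2} to kill the cross term $\langle\mathbf q\cdot\mathbf n,u\rangle_{\partial K}$, you re-derive that orthogonality by the same integration by parts that proves the lemma.
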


\begin{proof}
We first remark that
\[
 \dimm \boldsymbol{\mathcal P}_k(K)+\dimm \mathcal P_k(K)\\
 =  \dimm \boldsymbol{\mathcal P}_{k-1}(K)+\dimm \mathcal P_{k-1}(K)+\dimm \mathcal R_k(\partial K),
\]
and therefore we only need to show uniqueness. Let then $(\mathbf q,u)\in \boldsymbol{\mathcal P}_k(K)\times \mathcal P_k(K)$ satisfy 
\begin{subequations}\label{eq:111}
\begin{alignat}{4}
(\mathbf q,\mathbf r)_K &=0 &\qquad &\forall \mathbf r\in \boldsymbol{\mathcal P}_{k-1}(K),\\
(u,v)_K &=0 & & \forall v\in \mathcal P_{k-1}(K),\\
\label{eq:111c}
\langle\mathbf q\cdot\mathbf n+\tau  u,\mu\rangle_{\partial K}&=0 & &\forall\mu \in \mathcal R_{k}(K).
\end{alignat} 
\end{subequations}
Then $\mathbf q\in \boldsymbol{\mathcal P}_k^\bot(K)$ and $u\in \mathcal P_k^\bot(K)$. Testing \eqref{eq:111c} with $u|_{\partial K}$ and using Lemma \ref{lemma:2.2}, we prove that
\[
0 =\langle\mathbf q\cdot\mathbf n+\tau\, u,u\rangle_{\partial K}=\langle\tau\, u,u\rangle_{\partial K}=\langle\tau^{1/2} u,\tau^{1/2} u\rangle_{\partial K}
\]
and therefore $\tau^{1/2} u=0$ (we have used here that $\tau\ge 0$). We can now test \eqref{eq:111c} with $\mathbf q\cdot\mathbf n$ to prove that $\mathbf q\cdot\mathbf n=0$ on $\partial K$. By Lemma \ref{lemma:2.1}(b), it follows that $\mathbf q=0$. On the other hand, $\tau u=0$ on $\partial K$ and we have assumed that $\tau>0$ in at least one face of $K$. Lemma \ref{lemma:2.1}(a) proves then that $u=0$.
\end{proof}

\paragraph{Weak commutativity.} For general $(\mathbf q,u)$ and $v\in \mathcal P_k(K)$,
\begin{alignat*}{4}
(\mathrm{div}\,\boldsymbol\Pi^{\mathrm{HDG}}\mathbf q,v)_K &=\langle\boldsymbol\Pi^{\mathrm{HDG}}\mathbf q\cdot\mathbf n,v\rangle_{\partial K}-(\boldsymbol\Pi^{\mathrm{HDG}}\mathbf q,\nabla v)_K\\
&=\langle \mathbf q\cdot\mathbf n-\tau(\Pi^{\mathrm{HDG}} u-u),v\rangle_{\partial K}-(\mathbf q,\nabla v)_K &\qquad & \mbox{(by \eqref{eq:HDG})}\\
&=(\mathrm{div}\,\mathbf q,v)_K-\langle \tau(\Pi^{\mathrm{HDG}} u-u),v\rangle_{\partial K},
\end{alignat*}
which can be rewritten as
\begin{equation}\label{eq:commHDG}
(\mathrm{div}\,\boldsymbol\Pi^{\mathrm{HDG}}\mathbf q,v)_K+\langle\tau \,\Pi^{\mathrm{HDG}} u,v\rangle_{\partial K}=(\mathrm{div}\,\mathbf q,v)_K+\langle\tau\, u,v\rangle_{\partial K}\qquad \forall v\in \mathcal P_k(K).
\end{equation}
Compare this result with the clean commutativity properties of the RT \eqref{eq:commRT} and BDM \eqref{eq:commBDM}  projections.

\paragraph{Change to the reference element.} Let $\widecheck\tau:=|a_K|\tau \circ \mathrm F_K|_{\partial\widehat K}$. Consider then the projection $(\widehat{\boldsymbol\Pi}^{\mathrm{HDG}},\widehat\Pi^{\mathrm{HDG}})$ associated to the stabilization function $\widecheck\tau$. It is then easy to show that
\begin{equation}\label{eq:71}
(\widehat{\boldsymbol\Pi^{\mathrm{HDG}}\mathbf q},\widehat{\Pi^{\mathrm{HDG}} u}) =(\widehat{\boldsymbol\Pi}^{\mathrm{HDG}}\qq,\widehat\Pi^{\mathrm{HDG}} \widehat u).
\end{equation}

\paragraph{Decoupling of the equations.} Let us first use $\mu=w|_{\partial K}$ in \eqref{eq:HDGc}, where $w\in \mathcal P_k^\bot(K)$. Then 
\begin{alignat*}{6}
\langle \tau (\Pi^{\mathrm{HDG}}u-u),w\rangle_{\partial K} &=\langle (\mathbf q-\boldsymbol\Pi^{\mathrm{HDG}}\mathbf q)\cdot\mathbf n,w\rangle_{\partial K}
\\
&=(\mathrm{div}\,\mathbf q-\mathrm{div}\,\boldsymbol\Pi^{\mathrm{HDG}}\mathbf q,w)_K+(\mathbf q-\boldsymbol\Pi^{\mathrm{HDG}}\mathbf q,\nabla w)_K\\
&=(\mathrm{div}\,\mathbf q,w)_K. \qquad\qquad\qquad  \mbox{(by \eqref{eq:HDGb} and since $w\in\mathcal P_k^\bot(K)$)}
\end{alignat*}
Therefore, the solution of \eqref{eq:HDG} also satisfies
\begin{subequations}\label{eq:72}
\begin{alignat}{4}
\label{eq:72a}
(\Pi^{\mathrm{HDG}}u,w)_K &=(u,w)_K &\qquad & \forall w\in \mathcal P_{k-1}(K),\\
\label{eq:72b}
\langle \tau\Pi^{\mathrm{HDG}}u,w\rangle_{\partial K} &=\langle \tau u,w\rangle_{\partial K}+(\mathrm{div}\,\mathbf q,w)_K, &&\forall w\in \mathcal P_k^\bot(K),
\end{alignat}
and
\begin{alignat}{4}
\label{eq:72c}
(\boldsymbol\Pi^{\mathrm{HDG}}\mathbf q,\mathbf r)_K&=(\mathbf q,\mathbf r)_K &\quad &\forall \mathbf r\in \boldsymbol{\mathcal P}_{k-1}(K),\\
\label{eq:72d}
\langle\boldsymbol\Pi^{\mathrm{HDG}}\mathbf q\cdot\mathbf n,\mu\rangle_{\partial K\setminus e}&=\langle\mathbf q\cdot\mathbf n,\mu\rangle_{\partial K\setminus e}+\langle \tau(u-\Pi^{\mathrm{HDG}}u),\mu\rangle_{\partial K\setminus e} && \forall \mu \in \mathcal R_k(\partial K\setminus e),
\end{alignat}
where $e$ is any face of $\partial K$.
\end{subequations}
Note that equations \eqref{eq:72a}--\eqref{eq:72b} are uniquely solvable by Lemma \ref{lemma:2.1}(a) and show that $\Pi^{\mathrm{HDG}}u$ depends on $u$ and $\mathrm{div}\,\mathbf q$. Equations \eqref{eq:72c} and \eqref{eq:72d} are also uniquely solvable, as follows from a comment at the end of the proof of Lemma \ref{lemma:2.1}(b). 

\paragraph{The single face HDG method.} A particular choice of the stabilization function $\tau$ was given in \cite{CoDoGu:2008}. It consists of choosing one particular $e_K\in \mathcal E(K)$ and taking $\tau_{\partial K}>0 $ in $e_K$ and $\tau_K\equiv 0$ in $\partial K\setminus e$. This shows (take $e=e_K$ in \eqref{eq:72d}) that the vector part of the HDG projection is completely decoupled from the scalar part for the SF--HDG case, and that it does not depend on $\tau$.

\subsection{Estimates for the HDG projection}\label{sec:6.3}

\paragraph{Notation.} For some forthcoming arguments, it will be useful to isolate the face
\[
\widehat e\in \mathcal E(\widehat K), \qquad \widehat e\subset \{\mathbf x\in \mathbb R^d\,:\, \mathbf x\cdot(1,\ldots,1)=1\}.
\]
From this moment on, {\em the symbol $\lesssim$ will include independence of the parameters $\tau$ as well.}

\begin{proposition}[Estimate on the reference domain -- Part I]\label{prop:6.2}
Given $u$, $f$, and $0\neq \widecheck\tau\in \mathcal R_0(\partial\widehat K)$, such that $\widecheck\tau\ge 0$ and $\widecheck\tau_\circ:=\widecheck\tau|_{\widehat e}>0$, we define $\widehat\Pi u \in \mathcal P_k(\widehat K)$ by solving the equations
\begin{subequations}\label{eq:73}
\begin{alignat}{4}
\label{eq:73a}
(\widehat\Pi u,w)_{\widehat K}&=(u,w)_{\widehat K} &\qquad &\forall w\in \mathcal P_{k-1}(\widehat K),\\
\label{eq:73b}
\langle\widecheck\tau \widehat\Pi u,w\rangle_{\partial\widehat K}&=\langle\widecheck\tau u,w\rangle_{\partial\widehat K}+(f,w)_{\widehat K}&&\forall w\in \mathcal P_k^\bot(\widehat K).
\end{alignat}
\end{subequations}
Then
\begin{eqnarray}
\label{eq:74}
\| u\|_{\widehat K} &\le & \widecheck\tau_\circ^{-1} \big( \|\widecheck\tau\|_{L^\infty} \| u\|_{1,\widehat K}+\| f\|_{\widehat K}\big),\\
\label{eq:75}
\| u-\widehat\Pi u\|_{\widehat K} &\le &\widecheck\tau_\circ^{-1}\big( \|\widecheck\tau\|_{L^\infty} | u|_{k+1,\widehat K}+|f|_{k,\widehat K}\big). 
\end{eqnarray}
\end{proposition}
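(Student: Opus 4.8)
The plan is to handle both bounds by a single mechanism: decompose $\widehat\Pi u$ relative to an $L^2$-projection, reduce everything to a piece lying in the finite-dimensional space $\mathcal{P}_k^\bot(\widehat K)$, and extract the dependence on $\widecheck\tau$ explicitly through a face-coercivity estimate. The crucial observation is that for $\psi\in\mathcal{P}_k^\bot(\widehat K)$, since $\widecheck\tau\ge 0$ and $\widecheck\tau|_{\widehat e}=\widecheck\tau_\circ>0$ is constant on $\widehat e$, we have $\langle\widecheck\tau\psi,\psi\rangle_{\partial\widehat K}\ge\widecheck\tau_\circ\|\psi\|_{\widehat e}^2$. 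By Lemma \ref{lemma:2.1}(a) the restriction $\psi\mapsto\psi|_{\widehat e}$ is injective on $\mathcal{P}_k^\bot(\widehat K)$, so $\psi\mapsto\|\psi\|_{\widehat e}$ is a norm there; a finite-dimensional equivalence of norms then gives $\|\psi\|_{\widehat e}\gtrsim\|\psi\|_{\widehat K}$ with a constant depending only on $k$ and $d$. Hence
\[
\langle\widecheck\tau\psi,\psi\rangle_{\partial\widehat K}\gtrsim\widecheck\tau_\circ\|\psi\|_{\widehat K}^2\qquad\forall\psi\in\mathcal{P}_k^\bot(\widehat K),
\]
and this is the only place the lower bound on $\widecheck\tau$ enters.

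For the stability bound \eqref{eq:74} (understood as a bound on $\widehat\Pi u$), I would write $\widehat\Pi u=\Pi_{k-1}u+\psi$, where $\Pi_{k-1}$ is the $L^2(\widehat K)$-projection onto $\mathcal{P}_{k-1}(\widehat K)$; equation \eqref{eq:73a} forces $\Pi_{k-1}\widehat\Pi u=\Pi_{k-1}u$, so $\psi\in\mathcal{P}_k^\bot(\widehat K)$. Testing \eqref{eq:73b} with $w=\psi$ gives the identity $\langle\widecheck\tau\psi,\psi\rangle_{\partial\widehat K}=\langle\widecheck\tau(u-\Pi_{k-1}u),\psi\rangle_{\partial\widehat K}+(f,\psi)_{\widehat K}$. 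Bounding the left side below by the coercivity estimate, and the right side by Cauchy--Schwarz together with the trace bound $\|u-\Pi_{k-1}u\|_{\partial\widehat K}\lesssim\|u\|_{1,\widehat K}$ (trace theorem plus $H^1$-stability of $\Pi_{k-1}$ on the reference element) and the finite-dimensional inequality $\|\psi\|_{\partial\widehat K}\lesssim\|\psi\|_{\widehat K}$, dividing by $\|\psi\|_{\widehat K}$ yields $\|\psi\|_{\widehat K}\lesssim\widecheck\tau_\circ^{-1}(\|\widecheck\tau\|_{L^\infty}\|u\|_{1,\widehat K}+\|f\|_{\widehat K})$. Since $\|\Pi_{k-1}u\|_{\widehat K}\le\|u\|_{1,\widehat K}\le\widecheck\tau_\circ^{-1}\|\widecheck\tau\|_{L^\infty}\|u\|_{1,\widehat K}$ (because $\|\widecheck\tau\|_{L^\infty}\ge\widecheck\tau_\circ$), a triangle inequality gives \eqref{eq:74}.

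The approximation estimate \eqref{eq:75} follows the same route but comparing against $\Pi_k u$, the $L^2$-projection onto $\mathcal{P}_k(\widehat K)$. Setting $w^*:=\widehat\Pi u-\Pi_k u$, equation \eqref{eq:73a} again gives $w^*\in\mathcal{P}_k^\bot(\widehat K)$, and the split $u-\widehat\Pi u=(u-\Pi_k u)-w^*$ reduces matters to two pieces. The first is controlled by Bramble--Hilbert, $\|u-\Pi_k u\|_{\widehat K}\lesssim|u|_{k+1,\widehat K}$, which is where the optimal power $|u|_{k+1}$ appears. For $w^*$, testing \eqref{eq:73b} gives $\langle\widecheck\tau w^*,w^*\rangle_{\partial\widehat K}=\langle\widecheck\tau(u-\Pi_k u),w^*\rangle_{\partial\widehat K}+(f,w^*)_{\widehat K}$, where now both right-hand terms are estimated by Bramble--Hilbert: $\|u-\Pi_k u\|_{\partial\widehat K}\lesssim\|u-\Pi_k u\|_{1,\widehat K}\lesssim|u|_{k+1,\widehat K}$, while, using $w^*\perp\mathcal{P}_{k-1}$, one writes $(f,w^*)_{\widehat K}=(f-\Pi_{k-1}f,w^*)_{\widehat K}\lesssim|f|_{k,\widehat K}\|w^*\|_{\widehat K}$. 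The coercivity estimate then yields $\|w^*\|_{\widehat K}\lesssim\widecheck\tau_\circ^{-1}(\|\widecheck\tau\|_{L^\infty}|u|_{k+1,\widehat K}+|f|_{k,\widehat K})$, and combining with the first piece proves \eqref{eq:75}.

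I expect the main obstacle to be the clean extraction of the factors $\widecheck\tau_\circ^{-1}$ and $\|\widecheck\tau\|_{L^\infty}$ with $\lesssim$-constants genuinely independent of $\widecheck\tau$. This is precisely what the face-coercivity estimate delivers: all $\widecheck\tau$-dependence is confined to the two scalars, while the remaining constants come only from fixed finite-dimensional norm equivalences on $\mathcal{P}_k^\bot(\widehat K)$ and from trace and Bramble--Hilbert inequalities on the fixed domain $\widehat K$. The one subtle point is that the $f$-term must be paired against $w^*\in\mathcal{P}_k^\bot$ and rewritten via orthogonality to $\mathcal{P}_{k-1}$ \emph{before} applying Bramble--Hilbert, since a crude bound would only produce $\|f\|_{\widehat K}$ rather than the sharper $|f|_{k,\widehat K}$.
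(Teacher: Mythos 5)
Your proposal is correct and follows essentially the same route as the paper's proof: compare $\widehat\Pi u$ with an $L^2$-projection so that the difference lies in $\mathcal P_k^\bot(\widehat K)$, test \eqref{eq:73b} with that difference, and control it through the face coercivity $\langle\widecheck\tau\,\psi,\psi\rangle_{\partial\widehat K}\gtrsim\widecheck\tau_\circ\|\psi\|_{\widehat K}^2$ coming from Lemma \ref{lemma:2.1}(a), plus finite-dimensional equivalences, the trace theorem, and Bramble--Hilbert. The only (immaterial) differences are that you compare against $\Pi_{k-1}u$ for the stability bound where the paper uses $\widehat\Pi_k u$ throughout, and that you insert the orthogonality $(f,w^*)=(f-\Pi_{k-1}f,w^*)$ inside the pairing rather than rewriting the right-hand side of \eqref{eq:73b} as the paper does; both devices are identical in substance.
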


\begin{proof}
Let $\delta:=\widehat\Pi u-\widehat\Pi_k u$ ($\widehat\Pi_k$ is the projection onto $\mathcal P_k(\widehat K)$), and note that $\delta\in \mathcal P_k^\bot(\widehat K)$ by \eqref{eq:73a}. Then
\begin{alignat*}{4}
\|\delta\|_{\widehat K}^2 &\lesssim \|\delta\|_{\widehat e}^2 &\qquad & \mbox{(conseq of Lemma \ref{lemma:2.1}(a))}\\
&=\widecheck\tau_\circ^{-1}\langle\widecheck\tau \delta,\delta\rangle_{\widecheck e}&&(\widecheck\tau|_{\widecheck e}=\widecheck\tau_\circ)\\
&\le \widecheck\tau_\circ^{-1}\langle\widecheck\tau\delta,\delta\rangle_{\partial\widehat K} &&(\widecheck\tau\ge 0)\\
&=\widecheck\tau_\circ^{-1}\big( \langle\widecheck\tau (u-\widehat\Pi_k u),\delta\rangle_{\partial K} + (f,\delta)_{\widehat K}\big) && (\delta\in \mathcal P_k^\bot \mbox{ and \eqref{eq:73b}}))\\
&\le \widecheck\tau_\circ^{-1} \big( \|\widecheck\tau\|_{L^\infty} \| u-\widehat\Pi_k u\|_{\partial\widehat K}\|\delta\|_{\partial\widehat K}+\| f\|_{\widehat K}\|\delta\|_{\widehat K}\big)\\
&\lesssim \widecheck\tau_\circ\big(  \|\widecheck\tau\|_{L^\infty}\|u-\widehat\Pi_k u\|_{\partial\widehat K}+\|f\|_{\widehat K}\big) \|\delta\|_{\widehat K} && \mbox{(finite dimensions)}\\
&\lesssim \widecheck\tau_\circ^{-1} \big( \|\widecheck\tau\|_{L^\infty} \| u-\widehat\Pi_k u\|_{1,\widehat K} +\|f\|_{\widehat K})\|\delta\|_{\widehat K} &&\mbox{(trace theorem)}\\
&\lesssim \widecheck\tau_\circ^{-1} \big( \|\widecheck\tau\|_{L^\infty} \| u\|_{1,\widehat K} +\|f\|_{\widehat K})\|\delta\|_{\widehat K}. &&\mbox{(finite dimensions)}
\end{alignat*}
Therefore
\[
\| \widehat\Pi u\|_{\widehat K}\le \|\widehat \Pi_k u\|_{\widehat K}+\|\delta\|_{\widehat K}\lesssim \| u\|_{\widehat K}+ \widecheck\tau_\circ^{-1} \big( \|\widecheck\tau\|_{L^\infty}\| u\|_{1,\widehat K}+\|f\|_{\widehat K}\big),
\]
and \eqref{eq:74} is thus proved. At the same time, note that we can substitute \eqref{eq:73b} by
\[
\langle\widecheck\tau \widehat\Pi u,w\rangle_{\partial\widehat K}=\langle\widecheck\tau u,w\rangle_{\partial\widehat K}+(f-\widehat\Pi_{k-1}f,w)_{\widehat K}\qquad \forall w\in \mathcal P_k^\bot(\widehat K).
\]
Returning to our previous argument, we have
\[
\|\widehat\Pi u-\widehat\Pi_k u\|_{\widehat K}\lesssim \widecheck\tau_\circ^{-1} \big( \|\widecheck\tau\|_{L^\infty}\|u-\widehat\Pi_k u\|_{1,\widehat K}+\| f-\widehat\Pi_{k-1} f\|_{\widehat K}\big),
\]
and \eqref{eq:75} follows from a compactness (Bramble-Hilbert style) argument.
\end{proof}

\begin{proposition}[Estimate on the reference domain -- Part II]\label{prop:6.3}
Given $\varepsilon$, $\mathbf q$, and $0\neq \widecheck\tau\in \mathcal R_0(\partial\widehat K)$, $\widecheck\tau\ge 0$, we define $\widehat{\boldsymbol\Pi}\mathbf q\in\boldsymbol{\mathcal P}_k(\widehat K)$ by solving the equations
\begin{subequations}
\begin{alignat}{4}
\label{eq:76a}
(\widehat{\boldsymbol\Pi}\mathbf q,\mathbf r)_{\widehat K} &=(\mathbf q,\mathbf r)_{\widehat K}&\qquad&\forall\mathbf r \in \boldsymbol{\mathcal P}_{k-1}(\widehat K),\\
\label{eq:76b}
\langle\widehat{\boldsymbol\Pi}\mathbf q\cdot\widehat{\mathbf n},\mu\rangle_{\partial\widehat K\setminus\widehat e} &=\langle\mathbf q\cdot\widehat{\mathbf n}+\widecheck\tau \varepsilon,\mu\rangle_{\partial\widehat K\setminus\widehat e} &&\forall \mu\in \mathcal R_k(\partial\widehat K\setminus\widehat e).
\end{alignat}
\end{subequations}
Then
\begin{eqnarray}
\label{eq:77}
\| \widehat{\boldsymbol\Pi}\mathbf q\|_{\widehat K} &\lesssim & \|\mathbf q\|_{1,\widehat K}+ \|\widecheck\tau\|_{L^\infty(\partial\widehat K\setminus\widehat e)} \| \varepsilon\|_{\partial\widehat K},\\
\label{eq:78}
\|\mathbf q-\widehat{\boldsymbol\Pi}\mathbf q\|_{\widehat K} &\lesssim & |\mathbf q|_{k+1,\widehat K}+ \|\widecheck\tau\|_{L^\infty(\partial\widehat K\setminus\widehat e)}\| \varepsilon\|_{\partial\widehat K}.
\end{eqnarray}
\end{proposition}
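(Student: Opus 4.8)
The plan is to mirror the reference-element estimates \eqref{eq:19}--\eqref{eq:20} used for RT and the structure of Proposition \ref{prop:6.2}. First I would record that the system \eqref{eq:76a}--\eqref{eq:76b} is uniquely solvable: it is square (the dimension count is the one behind \eqref{eq:72c}--\eqref{eq:72d}), and injectivity follows exactly as in the comment closing the proof of Lemma \ref{lemma:2.1}(b), since discarding the single face $\widehat e$ still leaves $d$ normals on $\partial\widehat K\setminus\widehat e$ that span $\mathbb R^d$. The decisive observation is that \eqref{eq:76a}--\eqref{eq:76b} depend \emph{linearly} on the data $(\mathbf q,\widecheck\tau\varepsilon)$, so I would write $\widehat{\boldsymbol\Pi}\mathbf q=\mathbf p_1+\mathbf p_2$, where $\mathbf p_1\in\boldsymbol{\mathcal P}_k(\widehat K)$ is the solution obtained with $\varepsilon=0$ and $\mathbf p_2\in\boldsymbol{\mathcal P}_k(\widehat K)$ solves the homogeneous interior equation together with the boundary condition $\langle\mathbf p_2\cdot\widehat{\mathbf n},\mu\rangle_{\partial\widehat K\setminus\widehat e}=\langle\widecheck\tau\varepsilon,\mu\rangle_{\partial\widehat K\setminus\widehat e}$.

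For $\mathbf p_1$ the two estimates are the $\varepsilon$-free versions of \eqref{eq:77}--\eqref{eq:78}. A finite-dimensional argument on the reference element (as in \eqref{eq:19}) followed by the trace theorem gives $\|\mathbf p_1\|_{\widehat K}\lesssim\|\mathbf q\|_{\widehat K}+\|\mathbf q\cdot\widehat{\mathbf n}\|_{\partial\widehat K\setminus\widehat e}\lesssim\|\mathbf q\|_{1,\widehat K}$. Moreover, if $\mathbf q\in\boldsymbol{\mathcal P}_k(\widehat K)$ then $\mathbf q$ itself solves the $\varepsilon=0$ system, so by uniqueness $\mathbf p_1=\mathbf q$; hence $\mathbf q\mapsto\mathbf q-\mathbf p_1$ is bounded on $\mathbf H^{k+1}(\widehat K)$ and annihilates $\boldsymbol{\mathcal P}_k(\widehat K)$, and a compactness/Bramble--Hilbert argument yields $\|\mathbf q-\mathbf p_1\|_{\widehat K}\lesssim|\mathbf q|_{k+1,\widehat K}$. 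For $\mathbf p_2$ a single finite-dimensional lifting bound suffices: since $\mathbf p_2$ ranges in a fixed polynomial space and is determined by its normal data on $\partial\widehat K\setminus\widehat e$ together with $L^2(\widehat K)$-orthogonality to $\boldsymbol{\mathcal P}_{k-1}(\widehat K)$, one has $\|\mathbf p_2\|_{\widehat K}\lesssim\|\widecheck\tau\varepsilon\|_{\partial\widehat K\setminus\widehat e}\le\|\widecheck\tau\|_{L^\infty(\partial\widehat K\setminus\widehat e)}\|\varepsilon\|_{\partial\widehat K}$. Both \eqref{eq:77} and \eqref{eq:78} then follow by the triangle inequality from $\widehat{\boldsymbol\Pi}\mathbf q=\mathbf p_1+\mathbf p_2$ and $\mathbf q-\widehat{\boldsymbol\Pi}\mathbf q=(\mathbf q-\mathbf p_1)-\mathbf p_2$.

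The step I expect to require the most care is the uniformity of the constants with respect to $\widecheck\tau$, which the hypothesis now demands of $\lesssim$. This is precisely what the splitting is designed to secure: the component $\mathbf p_1$ does not see $\widecheck\tau$ at all, so its two bounds carry constants depending only on $k$, $d$ and $\widehat K$; and in the bound for $\mathbf p_2$ the entire $\widecheck\tau$-dependence has been extracted as the explicit factor $\|\widecheck\tau\|_{L^\infty(\partial\widehat K\setminus\widehat e)}$ before the purely finite-dimensional lifting inequality (whose constant is again $\widecheck\tau$-independent) is invoked. The only mild subtlety is that $\mathbf p_2$ is a lifting on the reduced boundary $\partial\widehat K\setminus\widehat e$, but this is exactly the configuration already used in \eqref{eq:72d} and justified by the remark at the end of Lemma \ref{lemma:2.1}(b), so no new ingredient is needed.
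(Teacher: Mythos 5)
Your proposal is correct, but it takes a genuinely different route from the paper. You decompose the solution of \eqref{eq:76a}--\eqref{eq:76b} by linearity, $\widehat{\boldsymbol\Pi}\mathbf q=\mathbf p_1+\mathbf p_2$, where $\mathbf p_1$ is the $\widecheck\tau$-free reduced projection (interior moments against $\boldsymbol{\mathcal P}_{k-1}(\widehat K)$, normal moments on $\partial\widehat K\setminus\widehat e$) and $\mathbf p_2$ is a finite-dimensional lifting of the moments of $\widecheck\tau\varepsilon$; you then treat $\mathbf p_1$ with the standard projection machinery (stability as in \eqref{eq:19}, preservation of $\boldsymbol{\mathcal P}_k(\widehat K)$ by uniqueness, Bramble--Hilbert as in \eqref{eq:20}) and $\mathbf p_2$ with a lifting bound in the spirit of Proposition \ref{prop:2.5}. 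The paper instead proves \eqref{eq:77} by a single undetailed finite-dimensional argument and proves \eqref{eq:78} by comparing $\widehat{\boldsymbol\Pi}\mathbf q$ with the componentwise $L^2$ projection $\widehat{\boldsymbol\Pi}_k\mathbf q$: the difference $\boldsymbol\delta$ lies in $\boldsymbol{\mathcal P}_k^\bot(\widehat K)$, is controlled by its normal trace on $\partial\widehat K\setminus\widehat e$ via the remark closing Lemma \ref{lemma:2.1}(b), and one chain of inequalities (using \eqref{eq:76b}, the trace theorem, finite dimensionality, then compactness) handles the $\mathbf q$-part and the $\varepsilon$-part simultaneously. Both arguments rest on the same pillars --- the $d$-normals remark of Lemma \ref{lemma:2.1}(b), which also underlies unique solvability as noted after \eqref{eq:72d}, finite-dimensional solution-operator bounds on $\widehat K$, the trace theorem, and Bramble--Hilbert --- so neither needs an ingredient missing from the other. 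What your splitting buys is modularity and transparency about constants: $\mathbf p_1$ never sees $\widecheck\tau$, and all $\widecheck\tau$-dependence is extracted as the explicit factor $\|\widecheck\tau\|_{L^\infty(\partial\widehat K\setminus\widehat e)}$ before any reference-element constant enters, which makes the $\tau$-uniformity of $\lesssim$ (demanded at the start of Section \ref{sec:6.3}) immediate. What the paper's comparison-with-$\widehat{\boldsymbol\Pi}_k$ buys is economy and parallelism: it is the same pattern used for the scalar part in Proposition \ref{prop:6.2}, so the two halves of the HDG projection are analyzed by one uniform technique.
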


\begin{proof}
The stability estimate \eqref{eq:77} follows from a simple finite dimensional argument. To prove \eqref{eq:78} we compare with the componentwise $L^2$ projection onto $\boldsymbol{\mathcal P}_k(\widehat K)$. Since $\boldsymbol\delta:=\widehat{\boldsymbol\Pi}\mathbf q-\widehat{\boldsymbol\Pi}_k\mathbf q\in \boldsymbol{\mathcal P}_k^\bot(\widehat K)$, we can use an argument based on Lemma \ref{lemma:2.1}(b) to bound
\begin{alignat*}{4}
\| \boldsymbol\delta\|_{\widehat K}^2 &\lesssim \|\boldsymbol\delta\cdot\widehat{\mathbf n}\|_{\partial\widehat K\setminus\widehat e}^2\\
&=\langle \boldsymbol\delta\cdot\widehat{\mathbf n},\boldsymbol\delta\cdot\widehat{\mathbf n}\rangle_{\partial\widehat K\setminus\widehat e}\\
&=\langle \mathbf q\cdot\widehat{\mathbf n}-\widehat{\boldsymbol\Pi}_k\mathbf q\cdot\widehat{\mathbf n},\boldsymbol\delta+\widecheck\tau\varepsilon,\boldsymbol\delta\rangle_{\partial\widehat K\setminus\widehat e} &\qquad &\mbox{(by \eqref{eq:76b})}\\
&\le \big( \|\mathbf q\cdot\widehat{\mathbf n}-\widehat{\boldsymbol\Pi}_k\mathbf q\cdot\widehat{\mathbf n}\|_{\partial\widehat K}+\|\widecheck\tau\|_{L^\infty(\partial\widehat K\setminus\widehat e)} \|\varepsilon\|_{\partial\widehat K}\big)\|\boldsymbol\delta\|_{\partial\widehat K}\\
&\lesssim \big( \|\mathbf q-\widehat{\boldsymbol\Pi}_k\mathbf q\|_{1,\widehat K}++\|\widecheck\tau\|_{L^\infty(\partial\widehat K\setminus\widehat e)} \|\varepsilon\|_{\partial\widehat K}\big)\|\boldsymbol\delta\|_{\widehat K}. &&\mbox{(trace thm and finite dim)}
\end{alignat*}
The result then follows from a compactness argument.
\end{proof}

\begin{proposition}[Estimates for the HDG projection]
Given $\mathbf q, u$, and $0\neq \tau\in \mathcal R_0(\partial K)$, $\tau \ge 0$,
\begin{subequations}
\begin{alignat}{4}
\| u-\Pi^{\mathrm{HDG}} u\|_K &\lesssim h_K^{k+1}\big( |u|_{k+1,K}+\tau_{\max}^{-1} |\mathrm{div}\,\mathbf q|_{k,K}\big),\\
\| \mathbf q-\boldsymbol\Pi^{\mathrm{HDG}}\mathbf q\|_K &\lesssim h_K^{k+1} \big(|\mathbf q|_{k+1,K}+\tau^\star |u|_{k+1,K}\big),
\end{alignat}
\end{subequations}
with $\tau_{\max}:=\|\tau\|_{L^\infty}$ and $\tau^\star:=\|\tau\|_{L^\infty(\partial K\setminus e)}$, where $\tau|_e=\tau_{\max}$.
\end{proposition}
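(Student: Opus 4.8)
The plan is to run exactly the scaling machinery used for Propositions~\ref{prop:2.4} and \ref{prop:5.2}, but now feeding it the $\tau$-explicit reference bounds of Propositions~\ref{prop:6.2} and \ref{prop:6.3}. The starting point is the invariance \eqref{eq:71}, $(\widehat{\boldsymbol\Pi^{\mathrm{HDG}}\mathbf q},\widehat{\Pi^{\mathrm{HDG}} u})=(\widehat{\boldsymbol\Pi}^{\mathrm{HDG}}\qq,\widehat\Pi^{\mathrm{HDG}}\widehat u)$, where the reference projection carries the transformed stabilization $\widecheck\tau=|a_K|\,\tau\circ\mathrm F_K|_{\partial\widehat K}$. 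The decoupled reference equations --- the hatted versions of \eqref{eq:72a}--\eqref{eq:72b} and \eqref{eq:72c}--\eqref{eq:72d} --- are precisely the setups of Propositions~\ref{prop:6.2} and \ref{prop:6.3}: for the scalar part the source is $\widehat f=\widecheck{\mathrm{div}\,\mathbf q}=|J_K|\,(\mathrm{div}\,\mathbf q)\circ\mathrm F_K$ (by \eqref{eq:1a}), and for the vector part the boundary datum is $\varepsilon=\widehat u-\widehat\Pi^{\mathrm{HDG}}\widehat u=\widehat{u-\Pi^{\mathrm{HDG}}u}$. I would choose the distinguished face $\widehat e$ to be the one on which $\tau$ attains $\tau_{\max}$, so that $\widecheck\tau_\circ=\widecheck\tau|_{\widehat e}$ corresponds to $\tau_{\max}$ and $\widecheck\tau^\star:=\|\widecheck\tau\|_{L^\infty(\partial\widehat K\setminus\widehat e)}$ to $\tau^\star$.

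For the scalar bound I would apply \eqref{eq:75} to get $\|\widehat u-\widehat\Pi^{\mathrm{HDG}}\widehat u\|_{\widehat K}\le\widecheck\tau_\circ^{-1}(\|\widecheck\tau\|_{L^\infty}|\widehat u|_{k+1,\widehat K}+|\widehat f|_{k,\widehat K})$ and then scale with \eqref{eq:6} and \eqref{eq:8}. Shape-regularity \eqref{eq:5} gives $|a_K|\approx h_K^{d-1}$ on every face, hence $\widecheck\tau_\circ\approx h_K^{d-1}\tau_{\max}$ and $\|\widecheck\tau\|_{L^\infty}\approx h_K^{d-1}\tau_{\max}$, so that $\widecheck\tau_\circ^{-1}\|\widecheck\tau\|_{L^\infty}\approx 1$ and $\widecheck\tau_\circ^{-1}\approx h_K^{1-d}\tau_{\max}^{-1}$. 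Since $|J_K|$ is constant, \eqref{eq:8} yields $|\widehat f|_{k,\widehat K}\approx h_K^{k+d/2}|\mathrm{div}\,\mathbf q|_{k,K}$, while $|\widehat u|_{k+1,\widehat K}\approx h_K^{k+1-d/2}|u|_{k+1,K}$. Multiplying through by the volume factor $h_K^{d/2}$ from \eqref{eq:6}, the first term becomes $h_K^{k+1}|u|_{k+1,K}$ and the second $h_K^{k+1}\tau_{\max}^{-1}|\mathrm{div}\,\mathbf q|_{k,K}$, which is exactly the claimed scalar estimate.

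For the vector bound I would apply \eqref{eq:78}, namely $\|\qq-\widehat{\boldsymbol\Pi}^{\mathrm{HDG}}\qq\|_{\widehat K}\lesssim|\qq|_{k+1,\widehat K}+\widecheck\tau^\star\|\varepsilon\|_{\partial\widehat K}$, and scale with the vector factor $h_K^{1-d/2}$ from \eqref{eq:6}. The term $|\qq|_{k+1,\widehat K}\approx h_K^{k+d/2}|\mathbf q|_{k+1,K}$ scales to $h_K^{k+1}|\mathbf q|_{k+1,K}$. For $\|\varepsilon\|_{\partial\widehat K}$ I would split $\varepsilon=(\widehat u-\widehat\Pi_k\widehat u)-\delta$ with $\delta:=\widehat\Pi^{\mathrm{HDG}}\widehat u-\widehat\Pi_k\widehat u\in\mathcal P_k^\bot(\widehat K)$, controlling the first piece by the trace theorem and Bramble--Hilbert, $\|\widehat u-\widehat\Pi_k\widehat u\|_{\partial\widehat K}\lesssim|\widehat u|_{k+1,\widehat K}$, and the polynomial correction $\delta$ by finite-dimensional equivalence together with the bound produced inside the proof of \eqref{eq:75}. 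With $\widecheck\tau^\star\approx h_K^{d-1}\tau^\star$, the $|\widehat u|_{k+1,\widehat K}$ contribution scales to exactly $h_K^{k+1}\tau^\star|u|_{k+1,K}$.

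The delicate point --- and the step I expect to require the most care --- is the $\mathrm{div}\,\mathbf q$-dependent part of $\varepsilon$. Through the correction $\delta$, \eqref{eq:75} makes $\|\varepsilon\|_{\partial\widehat K}$ inherit a term $\widecheck\tau_\circ^{-1}|\widehat f|_{k,\widehat K}$; once multiplied by $\widecheck\tau^\star$ this enters as $\widecheck\tau^\star\widecheck\tau_\circ^{-1}\approx\tau^\star/\tau_{\max}\le 1$ times $|\widehat f|_{k,\widehat K}$, i.e. it threatens to scale to a spurious term proportional to $(\tau^\star/\tau_{\max})\,h_K^{k+1}|\mathrm{div}\,\mathbf q|_{k,K}$ that is absent from the stated bound. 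To land on the clean estimate one must show this contribution drops out, which hinges on the $L^2(\partial\widehat K)$-orthogonality of Lemma~\ref{lemma:2.2} between $\delta|_{\partial\widehat K}$ (with $\delta\in\mathcal P_k^\bot(\widehat K)$) and $\widehat\delta^q\cdot\widehat{\mathbf n}$ (with $\widehat\delta^q:=\widehat{\boldsymbol\Pi}^{\mathrm{HDG}}\qq-\widehat{\boldsymbol\Pi}_k\qq\in\boldsymbol{\mathcal P}_k^\bot(\widehat K)$, the test function used in the energy argument behind \eqref{eq:78}). Pairing the correction against $\widehat\delta^q\cdot\widehat{\mathbf n}$, the $\mathrm{div}\,\mathbf q$ part is, up to the face-dependent weights of the piecewise-constant $\widecheck\tau$, an orthogonal pairing, and it must be tracked so that it cancels against the distinguished-face term rather than surviving as a genuine source. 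Managing this coupling between the two decoupled reference problems, rather than the scalings themselves, is the crux; everything else is the same routine bookkeeping as in Propositions~\ref{prop:2.4} and \ref{prop:5.2}.
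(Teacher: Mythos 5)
Your scaffolding is exactly the paper's: invariance \eqref{eq:71}, the decoupled reference equations \eqref{eq:72}, Propositions \ref{prop:6.2} and \ref{prop:6.3} applied with $\widehat f=\widecheck{\mathrm{div}\,\mathbf q}$ and $\varepsilon=\widehat u-\widehat\Pi^{\mathrm{HDG}}\widehat u$, the map $\mathrm F_K$ chosen so that $\widehat e$ goes to the face where $\tau=\tau_{\max}$, and the scalings \eqref{eq:5}, \eqref{eq:6}, \eqref{eq:8}. The scalar estimate and the $|\mathbf q|_{k+1}$ and $\tau^\star|u|_{k+1}$ parts of the vector estimate are handled just as in the paper (the paper controls $\|\widehat u-\widehat\Pi^{\mathrm{HDG}}\widehat u\|_{\partial\widehat K}$ via the trace theorem plus the already-proved $L^2(\widehat K)$ bound, essentially your splitting through $\widehat\Pi_k\widehat u$).

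The genuine problem is your final paragraph, the step you yourself call the crux. The term you label spurious,
\[
\frac{\tau^\star}{\tau_{\max}}\, h_K^{k+1}\,|\mathrm{div}\,\mathbf q|_{k,K},
\]
does appear — the paper's own computation produces it — but it needs no cancellation whatsoever: since $\tau^\star=\|\tau\|_{L^\infty(\partial K\setminus e)}\le\|\tau\|_{L^\infty(\partial K)}=\tau_{\max}$ and $|\mathrm{div}\,\mathbf q|_{k,K}\lesssim|\mathbf q|_{k+1,K}$ (each $k$-th order derivative of $\mathrm{div}\,\mathbf q$ is a combination of $(k+1)$-st order derivatives of the components of $\mathbf q$), it is simply absorbed into $h_K^{k+1}|\mathbf q|_{k+1,K}$, which is present in the stated bound. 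That one-line absorption is precisely how the paper closes the proof. By contrast, the cancellation mechanism you propose would not work as described: Lemma \ref{lemma:2.2} gives unweighted $L^2(\partial\widehat K)$-orthogonality over the \emph{whole} boundary between traces of $\mathcal P_k^\bot(\widehat K)$ and normal traces of $\boldsymbol{\mathcal P}_k^\bot(\widehat K)$, whereas the pairing in Proposition \ref{prop:6.3} lives on $\partial\widehat K\setminus\widehat e$ and carries the face-dependent weight $\widecheck\tau$; no exact orthogonality survives either restriction, so one cannot expect the $\mathrm{div}\,\mathbf q$ contribution to drop out. Replace that paragraph with the absorption argument and your proof coincides with the paper's.
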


\begin{proof}
The estimate for $u$ follows from Proposition \ref{prop:6.2} and a scaling argument. Note first that by \eqref{eq:71} we can study the error on the reference element. Doing as in \eqref{eq:72}, we have
\begin{alignat*}{4}
(\widehat\Pi^{\mathrm{HDG}}\widehat u,w)_{\widehat K} &=(\widehat u,w)_{\widehat K} &\qquad & \forall w\in \mathcal P_{k-1}(\widehat K),\\
\langle \widecheck\tau\widehat\Pi^{\mathrm{HDG}}\widehat u,w\rangle_{\partial \widehat K} &=\langle \widecheck\tau \widehat u,w\rangle_{\partial \widehat K}+(\widehat{\mathrm{div}}\,\widehat{\mathbf q},w)_{\widehat K} &&\forall w\in \mathcal P_k^\bot(\widehat K),\\
(\widehat{\boldsymbol\Pi}^{\mathrm{HDG}}\widehat{\mathbf q},\mathbf r)_{\widehat K}&=(\widehat{\mathbf q},\mathbf r)_{\widehat K} &\qquad &\forall \mathbf r\in \boldsymbol{\mathcal P}_{k-1}(\widehat K),\\
\langle\widehat{\boldsymbol\Pi}^{\mathrm{HDG}}\widehat{\mathbf q}\cdot\mathbf n,\mu\rangle_{\partial \widehat K\setminus \widehat e}&=\langle\widehat{\mathbf q}\cdot\mathbf n,\mu\rangle_{\partial \widehat K\setminus \widehat e}+\langle \widecheck\tau(\widehat u-\widehat \Pi^{\mathrm{HDG}}\widehat u),\mu\rangle_{\partial \widehat K\setminus \widehat e} && \forall \mu \in \mathcal R_k(\partial \widehat K\setminus \widehat e),
\end{alignat*}
where the transformation $\mathrm F_K:\widehat K\to K$ is chosen so that  $\mathrm F_K(\widehat e)=e$, where $e\in \mathcal E(K)$ is such that $\tau|_e=\tau_{\max}$. We apply first Proposition \ref{prop:6.2} with 
\begin{equation}\label{eq:80}
f=\widecheck{\mathrm{div}\,\mathbf q}=\widehat{\mathrm{div}}\,\widehat{\mathbf q}=|J_K| \widehat{\mathrm{div}\,\mathbf q},\quad \widecheck\tau_\circ \approx \tau_{\max} h_K^{d-1} \approx \|\widecheck\tau\|_{L^\infty},\quad \|\widecheck\tau\|_{L^\infty(\partial\widehat K\setminus\widehat e)} \lesssim \tau^\star h_K^{d-1}.
\end{equation}
Then, 
\begin{alignat*}{4}
\| u-\Pi^{\mathrm{HDG}} u\|_K & \approx h_K^{\frac{d}2} \| \widehat u-\widehat\Pi^{\mathrm{HDG}}\widehat u\|_{\widehat K} &\qquad &\mbox{(by \eqref{eq:6} and \eqref{eq:71})}\\
&\lesssim h_K^{\frac{d}2}\widecheck\tau_\circ^{-1}\big(\|\widecheck\tau\|_{L^\infty} | \widehat u|_{k+1,\widehat K}+|\widehat{\mathrm{div}}\,\widehat{\mathbf q}|_{k,\widehat K}\big) && \mbox{(by Proposition \ref{prop:6.2})}\\
&\lesssim h_K^{\frac{d}2} |\widehat u|_{k+1,\widehat K}+ \tau_{\max}^{-1}h_K^{1-\frac{d}2} |J_K| |\widehat{\mathrm{div}\,\mathbf q}|_{k,\widehat K}&&\mbox{(by \eqref{eq:80})}\\
&\lesssim h_K^{\frac{d}2} |\widehat u|_{k+1,\widehat K}+ \tau_{\max}^{-1}h_K^{1+\frac{d}2} |\widehat{\mathrm{div}\,\mathbf q}|_{k,\widehat K}&&\mbox{(by \eqref{eq:5})}\\
&\lesssim h_K^{k+1} \big(|u|_{k+1,K}+\tau_{\max}^{-1}|\mathrm{div}\,\mathbf q|_{k,K}\big), &&\mbox{(by \eqref{eq:8})}
\end{alignat*}
and consequently
\begin{alignat*}{4}
\| \widehat u-\Pi^{\mathrm{HDG}}\widehat u\|_{\partial\widehat K}&\lesssim \|\widehat u-\Pi^{\mathrm{HDG}}\widehat u\|_{1,\widehat K} &\qquad &\mbox{(trace theorem)}\\
&\le \|\widehat u-\widehat\Pi_k \widehat u\|_{1,\widehat K}+\|\widehat\Pi_k\widehat u-\widehat\Pi^{\mathrm{HDG}}\widehat u|_{1,\widehat K} \\
& \lesssim |\widehat u|_{k+1,\widehat K}+\|\widehat\Pi_k\widehat u-\widehat\Pi^{\mathrm{HDG}}\widehat u\|_{\widehat K}&&\mbox{(compactness and finite dim.)}\\
&\lesssim |\widehat u|_{k+1,\widehat K}+\|\widehat u-\widehat\Pi^{\mathrm{HDG}}\widehat u\|_{\widehat K}\\
&\lesssim h_K^{-\frac{d}2} h_K^{k+1}\big( |u|_{k+1,K}+\tau_{\max}^{-1}|\mathrm{div}\,\mathbf q|_{k,K}\big).
\end{alignat*}
We then apply Proposition \ref{prop:6.3} with $\varepsilon:=\widehat u-\widehat\Pi^{\mathrm{HDG}}\widehat u$, so that
\begin{alignat*}{4}
\|\mathbf q-\Pi^{\mathrm{HDG}}\mathbf q\|_K &\approx h_K^{1-\frac{d}2} \|\widehat{\mathbf q}-\widehat\Pi^{\mathrm{HDG}}\widehat{\mathbf q}\|_{\widehat K} &\qquad &\mbox{(by \eqref{eq:6} and \eqref{eq:71})}
\\
&\lesssim h_K^{1-\frac{d}2} \big(|\widehat{\mathbf q}|_{k+1,\widehat K}+ \|\widecheck\tau\|_{L^\infty(\partial\widehat K\setminus\widehat e)}\| \widehat u-\Pi^{\mathrm{HDG}}\widehat u\|_{\partial\widehat K}\big) &&\mbox{(by Proposition \ref{prop:6.3})}\\
&\lesssim h_K^{k+1} |\mathbf q|_{k+1,K}+ \tau^\star h_K^{\frac{d}2}\| \widehat u-\Pi^{\mathrm{HDG}}\widehat u\|_{\partial\widehat K}&&\mbox{(by \eqref{eq:8} and \eqref{eq:80})}\\
&\lesssim h_K^{k+1}\big(|\mathbf q|_{k+1,K}+\tau^\star\|u|_{k+1,K}+\tau^\star\,\tau_{\max}^{-1}|\mathrm{div}\,\mathbf q|_{k,K}\big)\\
&\lesssim h_K^{k+1}\big( |\mathbf q|_{k+1,K}+\tau^\star|u|_{k+1,K}\big).
\end{alignat*}
This finishes the proof.
\end{proof}

\paragraph{An important observation.} The entire analysis holds if we change $\tau$ by $-\tau$ in the definition of the projection. This is equivalent to changing the orientation of the normal vector and, as such, to a simple change of signs in some terms in the right-hand sides of the decoupled problems \eqref{eq:72}.

\subsection{Error analysis: energy arguments}

\paragraph{Error equations.} We start by redefining the local projections: we take $(\boldsymbol\Pi\mathbf q,\Pi u)$ to be the local HDG projection and $\mathrm P u$ to be (again) the orthogonal projection onto $M_h$. The discrete errors are the same quantities that we defined in \eqref{eq:37}
\[
\boldsymbol\varepsilon_h^q:=\boldsymbol\Pi \mathbf q-\mathbf q_h\in \mathbf V_h, \qquad \varepsilon_h^u:=\Pi u-u_h\in W_h,\qquad \widehat\varepsilon_h^u:=\mathrm Pu-\widehat u_h\in M_h.
\]
We will also consider the error in the fluxes:
\begin{alignat*}{4}
\widehat\varepsilon_h^q &:= \boldsymbol\Pi \mathbf q\cdot\mathbf n+\tau(\Pi u-\mathrm P u)-\big(\mathbf q_h\cdot\mathbf n+\tau(u_h-\widehat u_h)\big)\\
&= \boldsymbol\varepsilon_h^q\cdot\mathbf n+\tau (\varepsilon_h^u-\widehat\varepsilon_h^u)\\
&=\mathrm P(\mathbf q\cdot\mathbf n)-\big(\mathbf q_h\cdot\mathbf n+\tau(u_h-\widehat u_h)\big) &\qquad &\mbox{(see \eqref{eq:HDGc})}\\
&=\mathrm P(\mathbf q\cdot\mathbf n)-\widehat{\mathbf q}_h\cdot\mathbf n. &&\mbox{(see \eqref{eq:66})}
\end{alignat*}
This is how HDG projections and HDG equations interact:
\begin{subequations}\label{eq:81}
\begin{alignat}{6}
\label{eq:81a}
&(\kappa^{-1}\boldsymbol\Pi\mathbf q,\mathbf r)_{\mathcal T_h}-(\Pi  u,\mathrm{div}\,\mathbf r)_{\mathcal T_h}+\langle \mathrm P  u,\mathbf r\cdot\mathbf n\rangle_{\partial\mathcal T_h} &&=(\kappa^{-1}(\boldsymbol\Pi\mathbf q-\mathbf q),\mathbf r)_{\mathcal T_h} &\qquad &\forall\mathbf r \in \mathbf V_h,\\
\label{eq:81b}
&(\mathrm{div}\,\boldsymbol\Pi\mathbf q,w)_{\mathcal T_h}+\langle \tau (\Pi u-\mathrm P u),w\rangle_{\partial\mathcal T_h}& &=(f,w)_{\mathcal T_h} & &\forall w\in W_h,\\
&\langle\boldsymbol\Pi \mathbf q\cdot\mathbf n+\tau(\Pi u-\mathrm P u),\mu\rangle_{\partial\mathcal T_h\setminus\Gamma} &&=0 & &\forall\mu \in M_h^\circ,\\
&\langle  \mathrm P u,\mu\rangle_\Gamma &&=\langle u_0,\mu\rangle_\Gamma & &\forall \mu \in M_h^\Gamma.
\end{alignat}
\end{subequations}
Note that we have used the weak commutativity property \eqref{eq:commHDG} in \eqref{eq:81b}. The {\bf error equations} are the difference between the latter and the HDG equations \eqref{eq:HDG}:
\begin{subequations}\label{eq:82}
\begin{alignat}{6}
&(\kappa^{-1}\boldsymbol\varepsilon_h^q,\mathbf r)_{\mathcal T_h}-(\varepsilon_h^u,\mathrm{div}\,\mathbf r)_{\mathcal T_h}+\langle \widehat\varepsilon_h^u,\mathbf r\cdot\mathbf n\rangle_{\partial\mathcal T_h} &&=(\kappa^{-1}(\boldsymbol\Pi\mathbf q-\mathbf q),\mathbf r)_{\mathcal T_h} &\qquad &\forall\mathbf r \in \mathbf V_h,\\
&(\mathrm{div}\,\boldsymbol\varepsilon_h^q,w)_{\mathcal T_h}+\langle\tau(\varepsilon_h^u-\widehat\varepsilon_h^u),w\rangle_{\partial\mathcal T_h}& &=0 & &\forall w\in W_h,\\
&\langle\boldsymbol\varepsilon_h^q\cdot\mathbf n+\tau(\varepsilon_h^u-\widehat\varepsilon_h^u),\mu\rangle_{\partial\mathcal T_h\setminus\Gamma} &&=0 & &\forall\mu \in M_h^\circ,\\
\label{eq:82d}
&\langle  \widehat\varepsilon_h^u,\mu\rangle_\Gamma &&=0 & &\forall \mu \in M_h^\Gamma.
\end{alignat}
\end{subequations}
Once again, these equations replicate faithfully the error equations for mixed methods: taking $\tau=0$, we obtain again the error equations for RT \eqref{eq:36} and BDM (Section \ref{sec:5.4}), with new polynomial spaces and projections though.

\paragraph{Energy estimate.}
Testing equations \eqref{eq:82} with $(\boldsymbol\varepsilon_h^u,\varepsilon_h^u,-\widehat\varepsilon_h^u,-\widehat\varepsilon_h^q)$ and adding the results we obtain an {\bf energy identity}
\begin{equation}\label{eq:83}
(\kappa^{-1}\boldsymbol\varepsilon_h^q,\boldsymbol\varepsilon_h^q)_{\mathcal T_h}+\langle \tau(\varepsilon_h^u-\widehat\varepsilon_h^u),\varepsilon_h^u-\widehat\varepsilon_h^u\rangle_{\partial\mathcal T_h}=(\kappa^{-1}(\boldsymbol\Pi\mathbf q-\mathbf q),\boldsymbol\varepsilon_h^q)_{\mathcal T_h},
\end{equation}
and a corresponding {\bf energy estimate} that uses a parameter dependent seminorm
\[
|\mu|_\tau:=\langle \tau\,\mu,\mu\rangle_{\partial\mathcal T_h}^{1/2}=\Big(\sum_{K\in \mathcal T_h}\langle \tau\,\mu,\mu\rangle_{\partial K}\Big)^{1/2},
\]
\begin{framed}
\begin{equation}\label{eq:84}
\|\boldsymbol\varepsilon_h^q\|_{\kappa^{-1}}^2+|\varepsilon_h^u-\widehat\varepsilon_h^u|_\tau^2\le \| \boldsymbol\Pi \mathbf q-\mathbf q\|_{\kappa^{-1}}^2.
\end{equation}
\end{framed}

\paragraph{An estimate for the flux.}
As we saw in Section \ref{sec:3.2} (right before proving \eqref{eq:HH1}), we can bound
\[
h_K^{\frac12}\|\mathbf p\cdot\mathbf n\|_{\partial K}\lesssim \|\mathbf p\|_K\qquad \forall\mathbf p\in \boldsymbol{\mathcal P}_k(K),
\]
and therefore
\[
\|\widehat\varepsilon_h^q\|_h \le \|\boldsymbol\varepsilon_h^q\cdot\mathbf n\|_h+\|\tau(\varepsilon_h^u-\widehat\varepsilon_h^u)\|_h\lesssim \|\boldsymbol\varepsilon_h^q\|_\Omega + \max_{K\in \mathcal T_h}h_K\|\tau\|_{L^\infty(\partial K)}\, |\varepsilon_h^u-\widehat\varepsilon_h^u|_\tau,
\]
which together with the energy estimate \eqref{eq:84} yield our second estimate for HDG
\begin{framed}
\begin{equation}\label{eq:D1}
\| \widehat\varepsilon_h^q\|_h \lesssim \big(1+\max_{K\in \mathcal T_h} h_K\|\tau\|_{L^\infty(\partial K)}\big)\|\boldsymbol\Pi\mathbf q-\mathbf q\|_{\kappa^{-1}}.
\end{equation}
\end{framed}

\paragraph{Bound for $\widehat\varepsilon_h^u$.} If $k\ge 1$, the can locally lift the value $\widehat\varepsilon_h^u$ using the BDM lifting of Proposition \ref{prop:5.3}. The arguments used to prove \eqref{eq:63} are still valid, since they rely on the existence of a local lifting to the test space $\mathbf V_h$ and on the first error equation. We thus have
\begin{framed}
\begin{equation}\label{eq:D2}
\| \widehat\varepsilon_h^u\|_h \lesssim \|\varepsilon_h^u\|_\Omega+h \|\mathbf q-\mathbf q_h\|_{\kappa^{-1}} \qquad \mbox{for $k\ge 1$.}
\end{equation}
\end{framed}
\noindent This argument will guarantee superconvergence of $\widehat u_h$ to $\mathrm Pu$ whenever $u_h$ superconverges to $\Pi u$. This will be the goal of the next section.

\subsection{Error analysis: duality arguments}

\paragraph{Estimates by duality arguments.} In order to avoid some lengthy computations that have appeared in previous treatments of the duality arguments, we will use the more systematic approach of Section \ref{sec:4.5}. The first step is the consideration of a dual problem:
\begin{subequations}\label{eq:86}
\begin{alignat}{4}
\label{eq:86a}
\kappa^{-1} \boldsymbol\xi -\nabla \theta &=0 &\qquad &\mbox{in $\Omega$},\\
-\mathrm{div}\,\boldsymbol\xi &=\varepsilon_h^u &&\mbox{in $\Omega$},\\
\theta &=0 & &\mbox{on $\Gamma$}.
\end{alignat}
\end{subequations}
Because the balance of signs between $\boldsymbol\xi$ and $\theta$ has changed, we will call $(\boldsymbol\Pi\boldsymbol\xi,\Pi\theta)$ to the HDG projection corresponding to $-\tau$ (see the last comment of Section \ref{sec:6.3}). 
We now write some equations satisfied by the projections, namely what does for equations \eqref{eq:81} for problem \eqref{eq:86})
\begin{alignat*}{6}
&(\kappa^{-1}\boldsymbol\Pi \boldsymbol\xi,\mathbf r)_{\mathcal T_h}+(\Pi  \theta,\mathrm{div}\,\mathbf r)_{\mathcal T_h}-\langle \mathrm P  \theta,\mathbf r\cdot\mathbf n\rangle_{\partial\mathcal T_h} &&=(\kappa^{-1}(\boldsymbol\Pi\boldsymbol\xi-\boldsymbol\xi),\mathbf r)_{\mathcal T_h} &\qquad &\forall\mathbf r \in \mathbf V_h,\\
&-(\mathrm{div}\,\boldsymbol\Pi\boldsymbol\xi,w)_{\mathcal T_h}+\langle \tau (\Pi \theta-\mathrm P \theta),w\rangle_{\partial\mathcal T_h}& &=(\varepsilon_h^u,w)_{\mathcal T_h} & &\forall w\in W_h,\\
&\langle \boldsymbol\Pi \boldsymbol\xi \cdot\mathbf n-\tau(\Pi \theta-\mathrm P \theta),\mu\rangle_{\partial\mathcal T_h\setminus\Gamma} &&=0 & &\forall\mu \in M_h^\circ,\\
&\langle  \mathrm P \theta,\mu\rangle_\Gamma &&=0 & &\forall \mu \in M_h^\Gamma.
\end{alignat*}
Note how the second equation --the weak commutativity property--, and the third equation --the action of the projection on faces-- have changed signs because of the fact the we are using $-\tau$ instead of $\tau$.
We next go ahead and test with the errors of the solution to the HDG equation. We are going to align everything in a careful way, since we want to add by columns instead of by rows:
\begin{alignat*}{8}
&(\kappa^{-1}\boldsymbol\Pi \boldsymbol\xi,\boldsymbol\varepsilon_h^q)_{\mathcal T_h}&&+(\Pi  \theta,\mathrm{div}\,\boldsymbol\varepsilon_h^q)_{\mathcal T_h}&&-\langle \mathrm P  \theta,\boldsymbol\varepsilon_h^q\cdot\mathbf n\rangle_{\partial\mathcal T_h\setminus\Gamma} &&=(\kappa^{-1}(\boldsymbol\Pi\boldsymbol\xi-\boldsymbol\xi),\boldsymbol\varepsilon_h^q)_{\mathcal T_h},\\
&-(\mathrm{div}\,\boldsymbol\Pi\boldsymbol\xi,\varepsilon_h^u)_{\mathcal T_h}&&+\langle \tau \Pi \theta,\varepsilon_h^u\rangle_{\partial\mathcal T_h}&&-\langle\tau\mathrm P \theta,\varepsilon_h^u\rangle_{\partial\mathcal T_h\setminus\Gamma}& &=\|\varepsilon_h^u\|_\Omega^2,\\
&\langle\boldsymbol\Pi\boldsymbol\xi\cdot\mathbf n,\widehat\varepsilon_h^u\rangle_{\partial\mathcal T_h}&&-\langle\tau\Pi\theta,\widehat\varepsilon_h^u\rangle_{\partial\mathcal T_h}&&+\langle\tau\mathrm P\theta,\widehat\varepsilon_h^u\rangle_{\partial\mathcal T_h\setminus\Gamma} &&=0.
\end{alignat*}
In between, we have used that $\widehat\varepsilon_h^u=0$ on $\Gamma$ (this was the fourth error equation \eqref{eq:82d}) and $\mathrm P\theta=0$ on $\Gamma$. We now sume these three equalities, but organize terms by column:
\begin{alignat*}{4}
\|\varepsilon_h^u\|_\Omega^2+(\kappa^{-1}(\boldsymbol\Pi\boldsymbol\xi-\boldsymbol\xi),\boldsymbol\varepsilon_h^q)_{\mathcal T_h}=&(\kappa^{-1}\boldsymbol\Pi \boldsymbol\xi,\boldsymbol\varepsilon_h^q)_{\mathcal T_h}-(\mathrm{div}\,\boldsymbol\Pi\boldsymbol\xi,\varepsilon_h^u)_{\mathcal T_h}+\langle\boldsymbol\Pi\boldsymbol\xi\cdot\mathbf n,\widehat\varepsilon_h^u\rangle_{\partial\mathcal T_h}\\
&+(\Pi  \theta,\mathrm{div}\,\boldsymbol\varepsilon_h^q)_{\mathcal T_h}+\langle \tau \Pi \theta,\varepsilon_h^u\rangle_{\partial\mathcal T_h}-\langle\tau\Pi\theta,\widehat\varepsilon_h^u\rangle_{\partial\mathcal T_h}\\
&-\langle \mathrm P  \theta,\boldsymbol\varepsilon_h^q\cdot\mathbf n\rangle_{\partial\mathcal T_h\setminus\Gamma} 
-\langle\tau\mathrm P \theta,\varepsilon_h^u\rangle_{\partial\mathcal T_h\setminus\Gamma}+\langle\tau\mathrm P\theta,\widehat\varepsilon_h^u\rangle_{\partial\mathcal T_h\setminus\Gamma}\\
=&(\boldsymbol\Pi \boldsymbol\xi,\kappa^{-1}\boldsymbol\varepsilon_h^q)_{\mathcal T_h}-(\mathrm{div}\,\boldsymbol\Pi\boldsymbol\xi,\varepsilon_h^u)_{\mathcal T_h}+\langle\boldsymbol\Pi\boldsymbol\xi\cdot\mathbf n,\widehat\varepsilon_h^u\rangle_{\partial\mathcal T_h}\\
&+(\Pi  \theta,\mathrm{div}\,\boldsymbol\varepsilon_h^q)_{\mathcal T_h}+\langle  \Pi \theta,\tau(\varepsilon_h^u-\widehat\varepsilon_h^u)\rangle_{\partial\mathcal T_h}\\
&-\langle \mathrm P  \theta,\boldsymbol\varepsilon_h^q\cdot\mathbf n+\tau(\varepsilon_h^u-\widehat\varepsilon_h^u)\rangle_{\partial\mathcal T_h\setminus\Gamma}\\
=&(\boldsymbol\Pi\boldsymbol\xi,\kappa^{-1}(\boldsymbol\Pi\mathbf q-\mathbf q))_{\mathcal T_h},
\end{alignat*}
where we have used the error equations \eqref{eq:82}. What is left is a simple reorganization of terms in the above equality:
\begin{alignat*}{4}
\|\varepsilon_h^u\|_\Omega^2 &=(\boldsymbol\Pi\boldsymbol\xi,\kappa^{-1}(\boldsymbol\Pi\mathbf q-\mathbf q))_{\mathcal T_h}-(\boldsymbol\Pi\boldsymbol\xi-\boldsymbol\xi,\kappa^{-1}(\boldsymbol\Pi\mathbf q-\mathbf q_h))_{\mathcal T_h}\\
&=(\boldsymbol\Pi\boldsymbol\xi-\boldsymbol\xi, \kappa^{-1}(\mathbf q_h-\mathbf q))_{\mathcal T_h}+(\boldsymbol\xi,\kappa^{-1}(\boldsymbol\Pi\mathbf q-\mathbf q))_{\mathcal T_h} &&\mbox{(add and substract $\boldsymbol\xi$)}\\
&=(\boldsymbol\Pi\boldsymbol\xi-\boldsymbol\xi,\kappa^{-1}(\mathbf q_h-\mathbf q))_{\mathcal T_h}+(\nabla\theta,\boldsymbol\Pi\mathbf q-\mathbf q)_{\mathcal T_h} &&\mbox{(by equation \eqref{eq:86a})}\\
&=(\boldsymbol\Pi\boldsymbol\xi-\boldsymbol\xi,\kappa^{-1}(\mathbf q-\mathbf q_h))_{\mathcal T_h}+(\nabla\theta-\boldsymbol\Pi_{k-1}\nabla\theta,\boldsymbol\Pi\mathbf q-\mathbf q)_{\mathcal T_h}.
\end{alignat*}
Let us write this as an inequality:
\begin{alignat*}{4}
\|\varepsilon_h^u\|_\Omega^2\le & \|\boldsymbol\Pi\boldsymbol\xi-\boldsymbol\xi\|_{\mathcal T_h}\|\kappa^{-1/2}\|_{L^\infty} \|\mathbf q_h-\mathbf q\|_{\kappa^{-1}}+\|\nabla\theta-\boldsymbol\Pi_{k-1}\theta\|_{\mathcal T_h}\| \kappa^{1/2}\|_{L^\infty} \|\boldsymbol\Pi\mathbf q-\mathbf q\|_{\kappa^{-1}}\\
\lesssim &\big( \|\boldsymbol\Pi\boldsymbol\xi-\boldsymbol\xi\|_{\mathcal T_h}+\|\nabla\theta-\boldsymbol\Pi_{k-1}\theta\|_{\mathcal T_h}\|) \|\boldsymbol\Pi\mathbf q-\mathbf q\|_{\kappa^{-1}}.
\end{alignat*}
Assuming regularity
\[
\| \boldsymbol\xi\|_{1\Omega}+\|\theta\|_{2,\Omega}\le C_{\mathrm{reg}}\|\varepsilon_h^u\|_\Omega
\]
for the solution of \eqref{eq:86}, the above argument leads to 
\begin{framed}
\begin{equation}\label{eq:D3}
\|\varepsilon_h^u\|_\Omega \lesssim h^{\min\{k,1\}} \|\boldsymbol\Pi\mathbf q-\mathbf q\|_{\kappa^{-1}},
\end{equation}
\end{framed}
\noindent and hence to superconvergence when $k\ge 1$. For $k=0$, no regularity of the dual problem is needed. 

\paragraph{Wrap up paragraph.} The previous estimates together already studied of the HDG projection and of the projection $\mathrm P$, give the following table of convergence orders for smooth solutions.
\begin{framed}
\begin{alignat*}{6}
\|\mathbf q-\mathbf q_h\|_\Omega &\lesssim h^{k+1}  & \|\boldsymbol\Pi\mathbf q-\mathbf q_h\|_\Omega&\lesssim h^{k+1} &\qquad & \mbox{(see \eqref{eq:84})}\\
\| u-u_h\|_\Omega &\lesssim h^{k+1} &\| \Pi u-u_h\|_\Omega &\lesssim h^{k+1+\min\{k,1\}} &\qquad &\mbox{(see \eqref{eq:D3})}\\
\| u-\widehat u_h\|_h &\lesssim h^{k+1} & \|\mathrm P u-\widehat u_h\|_h &\lesssim h^{k+2} && \mbox{($k\ge 1$ -- see \eqref{eq:D2})}\\
\|\mathbf q\cdot\mathbf n-\widehat{\mathbf q}_h\cdot\mathbf n\|_h &\lesssim h^{k+1} \qquad & \|\mathrm P(\mathbf q\cdot\mathbf n)-\widehat{\mathbf q}_h\cdot\mathbf n\|_h &\lesssim h^{k+1}&& \mbox{(see \eqref{eq:D1})}
\end{alignat*}
\end{framed}

\section*{Acknowledgments}

I was introduced to Mixed Finite Elements  by Salim Meddahi. Years later, I learned about the Discontinuous Galerkin Method from Gabriel Gatica. Finally I got into the Hybridizable Discontinuous Galerkin Method through their creator, Bernardo Cockburn. The three of them have been great collaborators and even better friends for many years. What I now know about the topic, I know from them. I also want to thank Johnny Guzm\'an, for having helped me understand many tricky details about the analysis of mixed methods. My research is partially funded by the NSF--DMS 1216356 grant.

\bibliographystyle{abbrv}
\bibliography{referencesFEM}

\begin{thebibliography}{10}

\bibitem{ArBr:1985}
D.~N. Arnold and F.~Brezzi.
\newblock Mixed and nonconforming finite element methods: implementation,
  postprocessing and error estimates.
\newblock {\em RAIRO Mod\'el. Math. Anal. Num\'er.}, 19(1):7--32, 1985.

\bibitem{ArBrCoMa:2001}
D.~N. Arnold, F.~Brezzi, B.~Cockburn, and L.~D. Marini.
\newblock Unified analysis of discontinuous {G}alerkin methods for elliptic
  problems.
\newblock {\em SIAM J. Numer. Anal.}, 39(5):1749--1779, 2001/02.

\bibitem{ArFaWi:2006}
D.~N. Arnold, R.~S. Falk, and R.~Winther.
\newblock Finite element exterior calculus, homological techniques, and
  applications.
\newblock {\em Acta Numer.}, 15:1--155, 2006.

\bibitem{ArFaWi:2010}
D.~N. Arnold, R.~S. Falk, and R.~Winther.
\newblock Finite element exterior calculus: from {H}odge theory to numerical
  stability.
\newblock {\em Bull. Amer. Math. Soc. (N.S.)}, 47(2):281--354, 2010.

\bibitem{Braess:2007}
D.~Braess.
\newblock {\em Finite elements}.
\newblock Cambridge University Press, Cambridge, third edition, 2007.
\newblock Theory, fast solvers, and applications in elasticity theory,
  Translated from the German by Larry L. Schumaker.

\bibitem{BrSc:2008}
S.~C. Brenner and L.~R. Scott.
\newblock {\em The mathematical theory of finite element methods}, volume~15 of
  {\em Texts in Applied Mathematics}.
\newblock Springer, New York, third edition, 2008.

\bibitem{Brezzi:1974}
F.~Brezzi.
\newblock On the existence, uniqueness and approximation of saddle-point
  problems arising from {L}agrangian multipliers.
\newblock {\em Rev. Fran\c caise Automat. Informat. Recherche Op\'erationnelle
  S\'er. Rouge}, 8(R-2):129--151, 1974.

\bibitem{BrDoDuFo:1987}
F.~Brezzi, J.~Douglas, Jr., R.~Dur{\'a}n, and M.~Fortin.
\newblock Mixed finite elements for second order elliptic problems in three
  variables.
\newblock {\em Numer. Math.}, 51(2):237--250, 1987.

\bibitem{BrDoMa:1985}
F.~Brezzi, J.~Douglas, Jr., and L.~D. Marini.
\newblock Two families of mixed finite elements for second order elliptic
  problems.
\newblock {\em Numer. Math.}, 47(2):217--235, 1985.

\bibitem{BrFo:1991}
F.~Brezzi and M.~Fortin.
\newblock {\em Mixed and hybrid finite element methods}, volume~15 of {\em
  Springer Series in Computational Mathematics}.
\newblock Springer-Verlag, New York, 1991.

\bibitem{Ciarlet:1978}
P.~G. Ciarlet.
\newblock {\em The finite element method for elliptic problems}.
\newblock North-Holland Publishing Co., Amsterdam, 1978.
\newblock Studies in Mathematics and its Applications, Vol. 4.

\bibitem{CoDoGu:2008}
B.~Cockburn, B.~Dong, and J.~Guzm{\'a}n.
\newblock A superconvergent {LDG}-hybridizable {G}alerkin method for
  second-order elliptic problems.
\newblock {\em Math. Comp.}, 77(264):1887--1916, 2008.

\bibitem{CoGo:2004}
B.~Cockburn and J.~Gopalakrishnan.
\newblock A characterization of hybridized mixed methods for second order
  elliptic problems.
\newblock {\em SIAM J. Numer. Anal.}, 42(1):283--301 (electronic), 2004.

\bibitem{CoGoLa:2009}
B.~Cockburn, J.~Gopalakrishnan, and R.~Lazarov.
\newblock Unified hybridization of discontinuous {G}alerkin, mixed, and
  continuous {G}alerkin methods for second order elliptic problems.
\newblock {\em SIAM J. Numer. Anal.}, 47(2):1319--1365, 2009.

\bibitem{CoGoSa:2010}
B.~Cockburn, J.~Gopalakrishnan, and F.-J. Sayas.
\newblock A projection-based error analysis of {HDG} methods.
\newblock {\em Math. Comp.}, 79(271):1351--1367, 2010.

\bibitem{CoSa:TA}
B.~Cockburn and F.-J. Sayas.
\newblock Divergence-conforming {HDG} methods for {S}tokes flows.
\newblock To appear in Math. Comp.

\bibitem{Fortin:1977}
M.~Fortin.
\newblock An analysis of the convergence of mixed finite element methods.
\newblock {\em RAIRO Anal. Num\'er.}, 11(4):341--354, iii, 1977.

\bibitem{Nedelec:1980}
J.-C. N{\'e}d{\'e}lec.
\newblock Mixed finite elements in {${\bf R}^{3}$}.
\newblock {\em Numer. Math.}, 35(3):315--341, 1980.

\bibitem{Nedelec:1986}
J.-C. N{\'e}d{\'e}lec.
\newblock A new family of mixed finite elements in {${\bf R}^3$}.
\newblock {\em Numer. Math.}, 50(1):57--81, 1986.

\bibitem{RaTh:1977}
P.-A. Raviart and J.~M. Thomas.
\newblock A mixed finite element method for 2nd order elliptic problems.
\newblock In {\em Mathematical aspects of finite element methods ({P}roc.
  {C}onf., {C}onsiglio {N}az. delle {R}icerche ({C}.{N}.{R}.), {R}ome, 1975)},
  pages 292--315. Lecture Notes in Math., Vol. 606. Springer, Berlin, 1977.

\bibitem{Stenberg:1991}
R.~Stenberg.
\newblock Postprocessing schemes for some mixed finite elements.
\newblock {\em RAIRO Mod\'el. Math. Anal. Num\'er.}, 25(1):151--167, 1991.

\end{thebibliography}

\end{document}